
\documentclass{amsart}
\usepackage{graphicx}
\usepackage{fullpage}
\usepackage{color}
\usepackage{comment}
\usepackage{paralist}
\usepackage{hyperref}

\setlength{\textwidth}{6.5in}
\setlength{\marginparsep}{.5in}
\setlength{\oddsidemargin}{.5in}
\setlength{\oddsidemargin}{0in}
\setlength{\evensidemargin}{0in}
\setlength{\paperheight}{11in}
\setlength{\paperwidth}{8.5in}

								
\setlength{\topmargin}{0in}
\setlength{\textheight}{8.75in}

\raggedbottom

\newtheorem{thm}{Theorem}[section]
\newtheorem{prop}[thm]{Proposition}
\newtheorem{lem}[thm]{Lemma}
\newtheorem{cor}[thm]{Corollary}

\newtheorem{asm}{Assumption}

\newtheorem{remark}{Remark}


\newcommand{\ra}{\rightarrow}

\newcommand{\Z}{\mathbb Z}     

\renewcommand{\a}{\alpha}
\renewcommand{\b}{\beta}
\renewcommand{\d}{\delta}

\renewcommand{\l}{\lambda}

\newcommand{\w}{\omega}              

\renewcommand{\P}{\mathbb{P}_\a}        
\newcommand{\E}{\mathbb{E}_\a}          

\newcommand{\fl}[1]{\lfloor #1 \rfloor}  

\author{Sung Won Ahn}
\address{Sung Won Ahn\\Purdue University\\Department of Mathematics\\150 N University Street\\West Lafayette, IN  47907\\USA}
\email{asw0205@gmail.com}

\author{Jonathon Peterson}
\address{Jonathon Peterson\\Purdue University\\Department of Mathematics\\150 N University Street\\West Lafayette, IN  47907\\USA}
\email{peterson@purdue.edu}
\urladdr{www.math.purdue.edu/~peterson}
\thanks{J. Peterson was partially supported by NSA grants H98230-13-1-0266 and H98230-15-1-0049.}

\begin{document}

\title[Oscillations of quenched slowdown asymptotics]{Oscillations of quenched slowdown asymptotics for ballistic one-dimensional random walk in a random environment}

\begin{abstract}
We consider a one dimensional random walk in a random environment (RWRE) with a positive speed $\lim_{n\to\infty}\frac{X_n}{n}=v_\alpha>0$.
Gantert and Zeitouni \cite{Zei98} showed that if the environment has both positive and negative local drifts then the quenched slowdown probabilities $P_\omega(X_n < xn)$ with $x \in (0,v_\alpha)$ decay approximately like $\exp\{-n^{1-1/s}\}$ for a deterministic $s > 1$.
More precisely, they showed that $n^{-\gamma} \log P_\omega( X_n < x n)$ converges to $0$ or $-\infty$ depending on whether $\gamma > 1-1/s$ or $\gamma < 1-1/s$.
In this paper, we improve on this by showing that $n^{-1+1/s} \log P_\omega( X_n < x n)$ oscillates between $0$ and $-\infty$, almost surely.
This had previously been shown only in a very special case of random environments \cite{Gan02}.
\end{abstract}
\maketitle
\section{Introduction}
Let $\omega=\{\omega_z\}\in[0,1]^{\mathbb{Z}}$ be a sequence of independent, identically distributed random variables called an environment, and let $\alpha$ be the distribution of $\w$ on the space $[0,1]^\Z$ of all environments.  For a given environment $\omega$, we can generate a random path, $X_n, n\in\mathbb{N}$, with transition probability
\begin{align*}
&P_\omega(X_{n+1}=x+1|X_n=x)=\omega_x\\
&P_\omega(X_{n+1}=x-1|X_n=x)=1-\omega_x.
\end{align*}
The process generated in this way is called a random walk in a random environment (RWRE).
If the path $X_n$ starting at $x$ is generated under one particular environment $\omega$, the corresponding law is called quenched law denoted by $P_\omega^x(\cdot)$, and its expectation is denoted by $E_\omega^x[\cdot]$.  Without conditioning on the environment $\omega$, the law of $X_n$ starting at $x$ is called the annealed law denoted by
$\P^x(\cdot) = E_\a[ P_\w^x(\cdot) ]$, where $E_\a[\cdot]$ denotes expectation with respect to the measure $\a$ on environments.
Expectations under the annealed measure will be denoted by $\E^x[\cdot]$.  For simplicity we write $P_\w(\cdot)$, $E_\omega[\cdot]$, $\P(\cdot)$, $\E[\cdot]$ when the walk is started at $x=0$.

The first mathematical result for RWRE was the limit behaviors of $X_n$ by Solomon in \cite{Sol75}.  Solomon proved that the recurrence or transience of the RWRE is characterized by the sign of $E_\a[\log\rho_0]$, where the random variables $\rho_x$ are defined by $\rho_x= (1-\omega_x)/\omega_x$.  In Solomon's paper, he showed that the RWRE is transient to $+\infty$ if $E_\a[\log\rho_0]<0$, transient to $-\infty$ if $E_\a[\log\rho_0]>0$, and recurrent if $E_\a[\log\rho_0]=0$.  Further, he also proved that $X_n$ satisfies a law of large numbers, developing an explicit formula of the speed of RWRE.
In particular, Solomon showed that the limit $v_\alpha=\lim_{n\to\infty}\frac{X_n}{n}$ exists $\P$-a.s., and if the walk is transient to the right (i.e., $E_\alpha[\log \rho_0]<0$) the speed $v_\alpha$ is given by
\begin{equation}\label{con1}
 v_\alpha =
\begin{cases}
 \frac{1-E_\a[\rho_0]}{1+E_\a[\rho_0]} & \text{if } E_\a[\rho_0]<1 \\
 0 & \text{if } E_\a[\rho_0]\geq 1.
\end{cases}
\end{equation}
An extension to Solomon's work, the limiting distributions of transient RWRE under the annealed law, was studied by Kesten, Kozlov, and Spitzer.  In their paper, a parameter $s>0$, defined by the equation
$$E_\a[\rho^s_0]=1,\quad s>0,$$
proved to be a key factor determining both the scaling factor and the limit law of the random walk.   In part,
\begin{itemize}
\item If $s\in(1,2)$, then under annealed law, $\frac{X_n-nv_\alpha}{n^{1/s}}\Rightarrow$ a stable law of index $s$.
\item If $s>2$, then under annealed law with a constant $\sigma>0$, $\frac{X_n-nv_\alpha}{\sigma\sqrt{n}}\Rightarrow$ a standard normal law.
\end{itemize}
(Here, and throughout the paper, we will use $\Rightarrow$ to denote convergence in distribution.)  Limiting distributions for $s \in(0,1]$ and $s=2$ are also shown in \cite{KKS}.


The main results in the present paper concern large deviations of RWRE.
A large deviation principle (LDP) for $X_n/n$ under the quenched measure was first proved by Greven and den Hollander \cite{GH94}.
Later, Comets, Gantert, and Zeitouni \cite{Zei03} used a different approach, obtaining a LDP for $X_n/n$ as a byproduct of a LDP for $T_n/n$, where $T_n:=\inf\{i\geq 0:X_i=n\}$ is the hitting time of site $n$.
This approach had the advantage of giving LDPs under both the quenched and annealed measures.
Moreover, the approach in \cite{Zei03} led to a good qualitative description of the quenched and annealed large deviation rate functions.
Our interest in the present paper concerns certain large deviation asymptotics when the RWRE is positive speed and with mixed local drifts;
that is, $v_\alpha > 0$ and $\alpha(\omega_0 \leq 1/2) > 0$.
In this case, the results in \cite{Zei03} show that both the quenched and averaged large deviation rate functions vanish on the interval $[0,v_\alpha]$. That is,
\[
\lim_{n\to\infty}\frac{1}{n}\log P_\omega\left(\frac{X_n}{n}<v \right)=\lim_{n\to\infty}\frac{1}{n}\log \P\left(\frac{X_n}{n}<v \right)=0,\quad{}v\in[0,v_\alpha].
\]
Thus, in the case of positive speed with mixed local drifts, the probability of the random walk moving at a positive but slower than typical speed decays sub-exponentially in $n$.
It was shown in several papers that the precise rate of decay of these large deviation slowdown probabilities is different under the quenched and annealed measures and that the sub-exponential rate depends on the specifics of the distribution $\alpha$ on environments \cite{DPZ,Zei98,PPZ}.
Our interest in this paper concerns the rate of decay of the quenched probabilities $P_\omega(X_n < nv)$
under the following assumptions.
\begin{asm}\label{asm:s}
The distribution $\alpha$ on environments is such that $E_\a[ \log \rho_0 ] < 0$ and $E_\a[ \rho_0^s ] = 1$ for some $s > 1$.
\end{asm}
\begin{remark}
 It follows from H\"older's inequality that $\gamma \mapsto E_\a[ \rho_0^\gamma]$ is a convex function. Moreover, the slope of this function at $\gamma = 0$ is $E_\a[\log \rho_0] < 0$ and thus it follows from Assumption \ref{asm:s} that $\E[\rho_0] < 1$ and therefore the RWRE is transient to the right with positive speed $v_\alpha > 0$. Moreover, since $\rho_0 < 1 \iff \omega_0 > 1/2$ it follows that $\alpha(\omega_0 > 1/2)>0$ and $\alpha(\omega_0 < 1/2) > 0$. Since the environment is assumed to be i.i.d.\ this implies that $\alpha$-a.e.\ environment has sites with local drifts to the right and to the left.
\end{remark}

In addition to Assumption \ref{asm:s}, we will also need the following technical assumptions.
\begin{asm}\label{asm:s2}
The distribution of $\log\rho_0$ is non-lattice under $\alpha$ and that $E_\alpha[ \rho_0^s \log \rho_0 ] < \infty$.
\end{asm}
\begin{remark}
The conditions in Assumption \ref{asm:s2} are needed for certain precise tail asymptotics that we will use throughout the paper. It may be that the main results of this paper are true without these additional technical assumptions, but this would require dealing with rougher tail asymptotics throughout the paper. The conditions in Assumption \ref{asm:s2} have also been used in many previous papers in one-dimensional RWRE \cite{KKS},\cite{GS02},\cite{PZ09},\cite{FGP10},\cite{PS13},\cite{ESTZ13},\cite{DG12}.
\end{remark}

The asymptotics of the quenched slowdown probabilities under Assumption \ref{asm:s} were first studied by Gantert and Zeitouni in \cite{Zei98}.
In particular, Gantert and Zeitouni proved that for any $v \in (0,v_\alpha)$ and any $\d>0$,
\begin{align*}
    &\lim_{n\to\infty}\frac{1}{n^{1-1/s+\delta}}\log P_\omega\left(\frac{X_n}{n}\leq v\right) =  0, \qquad \a\text{-a.s.}\\
    &\lim_{n\to\infty}\frac{1}{n^{1-1/s-\delta}}\log P_\omega\left(\frac{X_n}{n}\leq v\right) =  -\infty, \qquad \a\text{-a.s.}
\end{align*}
One might suspect from this that $P_\omega(X_n/n\leq v)$ decays on a stretched exponential scale like $\exp(-Cn^{1-1/s})$ for some deterministic constant $C>0$ depending on $v\in(0,v_\alpha)$.  However, in \cite{Zei98}
Gantert and Zeitouni showed that for any $v \in (0,v_\a)$,
\begin{equation}
\limsup_{n\to\infty}\frac{1}{n^{1-1/s}}\log P_\omega(\frac{X_n}{n}<v)=0, \qquad \a\text{-a.s.,}\label{eq0}
\end{equation}
and conjectured that the corresponding $\liminf$ is equal to $-\infty$.
The main result of our paper complements \eqref{eq0} by proving this conjecture.
\begin{thm}\label{thm0}
If Assumptions \ref{asm:s} and \ref{asm:s2} hold, then for any $v \in (0,v_\alpha)$,
\begin{equation}
\liminf_{n\to\infty}\frac{1}{n^{1-1/s}}\log P_\omega\left(\frac{X_n}{n}<v \right)=-\infty, \qquad \alpha-a.s.\label{thm00}
\end{equation}
\end{thm}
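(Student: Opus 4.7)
My plan is to derive a quenched upper bound of the form $P_\omega(X_n < vn) \leq \exp(-c_1 n/M_n^*)$, where $M_n^*$ is the maximum ``trap height'' in the portion of the environment used by the first $\lceil vn \rceil$ steps, and then to exhibit an (almost surely infinite) random subsequence $n_j$ along which $M_{n_j}^*$ is much smaller than the typical size $n_j^{1/s}$. Since $n/M_{n_j}^* \gg n^{1-1/s}$ on such a subsequence, this yields the desired $\liminf = -\infty$.

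\emph{Hitting-time reduction and ladder-block decomposition.} Because $X_n/n \to v_\alpha > v$ almost surely, it suffices to estimate $P_\omega(T_{\lceil vn \rceil} \geq n)$, where $T_m = \inf\{k : X_k = m\}$. I decompose the environment using the ladder indices of the potential $V(x) = \sum_{i=1}^x \log \rho_i$, defined by $\eta_0 = 0$ and $\eta_{k+1} = \inf\{j > \eta_k : V(j) < V(\eta_k)\}$. Let $M_k := \max_{\eta_k \leq j < \eta_{k+1}} \exp(V(j)-V(\eta_k))$ be the trap height in block $k$ and $\tau_k := T_{\eta_{k+1}} - T_{\eta_k}$ its crossing time. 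Under Assumption \ref{asm:s2}, the pairs $(\eta_{k+1}-\eta_k, M_k)_{k\geq 1}$ are i.i.d.\ with Pareto tail $\alpha(M_1 > h) \sim C h^{-s}$ (Kesten's renewal theorem), and the $\tau_k$ are independent under $P_\omega$.

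\emph{Quenched exponential-moment bound.} A standard generating-function analysis for quenched RWRE hitting times produces a bound of the form
\[
\log E_\omega\!\bigl[e^{\lambda \tau_k}\bigr] \leq \lambda E_\omega[\tau_k] + C\lambda^2 M_k (\eta_{k+1}-\eta_k), \qquad 0 \leq \lambda \leq c/M_k,
\]
with $c,C>0$ depending only on $\alpha$. Set $N = N_n := \inf\{k : \eta_k \geq \lceil vn \rceil\}$ and $M_n^* := \max_{k < N} M_k$. Applying exponential Chebyshev with $\lambda = c/M_n^*$ and using quenched independence of the $\tau_k$ yields
\[
P_\omega\!\bigl(T_{\lceil vn\rceil} \geq n\bigr) \leq \exp\!\Bigl(-\lambda\bigl(n - E_\omega[T_{\lceil vn\rceil}]\bigr) + C\lambda^2 \sum_{k < N} M_k(\eta_{k+1}-\eta_k)\Bigr).
\]
Solomon's speed formula and the ergodic theorem give $E_\omega[T_{\lceil vn\rceil}]/n \to v/v_\alpha < 1$ and $\eta_N/n \to v$, while $\sum_{k<N} M_k(\eta_{k+1}-\eta_k) \leq M_n^*\,\eta_N$. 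The exponent is therefore bounded above by $-c_1 n/M_n^*$ for some deterministic $c_1 > 0$ (depending on $v_\alpha - v$), for all large $n$, $\alpha$-a.s.

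\emph{A subsequence with atypically small maximum trap.} Fix $\kappa > 0$ and take the rapidly growing sequence $n_j = 2^{2^j}$. Let $A_j := \{\max_{n_{j-1} < k \leq n_j} M_k \leq \kappa n_j^{1/s}\}$. By the Pareto tail of $M_1$ and classical extreme value theory, $\alpha(A_j) \to \exp(-C\kappa^{-s}) > 0$ as $j \to \infty$. The $A_j$ depend on disjoint portions of the environment, so they are independent, and Borel--Cantelli II gives $A_j$ i.o.\ $\alpha$-a.s. On the other hand, the union bound $\alpha(M_{n_{j-1}}^* > \kappa n_j^{1/s}) \leq C\kappa^{-s} n_{j-1}/n_j = C\kappa^{-s} 2^{-2^{j-1}}$ is summable, so Borel--Cantelli I gives $M_{n_{j-1}}^* \leq \kappa n_j^{1/s}$ eventually, $\alpha$-a.s. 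Combining these two observations yields $M_{n_j}^* \leq \kappa n_j^{1/s}$ for infinitely many $j$, $\alpha$-a.s. Inserting into the bound of the previous step gives $n_j^{-(1-1/s)}\log P_\omega(X_{n_j} < v n_j) \leq -c_1/\kappa$ along a (random) infinite subsequence of $j$'s, and since $\kappa > 0$ was arbitrary, the liminf is $-\infty$ almost surely. The main obstacle is the quenched exponential-moment bound with constants that still give a strictly positive net linear coefficient after subtracting $\lambda E_\omega[T_{\lceil vn\rceil}]$: one must ensure that the quadratic correction $C\lambda^2 \sum M_k(\eta_{k+1}-\eta_k)$ stays small compared to the leading linear term even in the presence of occasional large $M_k$, and that the quenched expectation $E_\omega[T_{\lceil vn\rceil}]/n$ is uniformly close to $v/v_\alpha$ on the atypically-good-environment events. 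This is where the refined tail assumptions in Assumption \ref{asm:s2} enter.
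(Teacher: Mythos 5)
Your overall strategy---bound $\log P_\omega(T_{\lceil vn\rceil}\ge n)$ by exponential Chebyshev with $\lambda\sim 1/M_n^*$, then exhibit an a.s.\ infinite random subsequence along which the running maximum of trap heights is atypically small, namely $M_{n_j}^*\lesssim \kappa\,n_j^{1/s}$---is the right skeleton, and it is also essentially the skeleton of the paper's argument. However, the step on which everything rests is not correct as stated, and the gap is not merely technical.

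You claim a quenched exponential-moment bound of the form
\[
\log E_\omega\!\bigl[e^{\lambda\tau_k}\bigr]\ \le\ \lambda E_\omega[\tau_k]+C\lambda^2 M_k(\eta_{k+1}-\eta_k),\qquad 0\le\lambda\le c/M_k,
\]
and describe it as ``a standard generating-function analysis.'' But for a generic environment under these assumptions the left-hand side is $+\infty$ for \emph{every} $\lambda>0$: the walk started at $\eta_k$ is free to excurse arbitrarily far to the left before reaching $\eta_{k+1}$, and since $\alpha(\omega_0<1/2)>0$ with i.i.d.\ increments, $\alpha$-a.e.\ environment contains arbitrarily deep potential valleys to the left of $\eta_k$. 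Consequently $P_\omega(\tau_k>t)$ decays slower than any exponential, and $E_\omega[e^{\lambda\tau_k}]=\infty$ for all $\lambda>0$. (This fact is recorded explicitly in the paper, citing \cite{Zei03}, and is the motivating observation for Section~\ref{sec:mgf}.) Putting the constraint $\lambda\le c/M_k$ on $\lambda$ does not help, because $M_k$ only measures the trap height inside block $k$; it says nothing about the far deeper traps the walk may revisit to the left. So the central estimate of your proof is vacuous, and the subsequent Chebyshev step collapses.

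The paper repairs exactly this problem by inserting an artificial reflection: it bounds $\tau_k$ by the crossing time in the modified environment $\omega(m)$ with a reflecting barrier placed a fixed number of ladder blocks to the left, where the quenched MGF is genuinely finite (Lemma~\ref{lem-1}, Corollary~\ref{cor-1}, with the constraint expressed as $e^{-\lambda}-\sinh(\lambda)E_{\omega(m)}^m[T]>0$, i.e.\ roughly $\lambda\lesssim 1/E_{\omega(m)}^m[T]$, not $\lambda\lesssim 1/M_k$). It then has to do real work (Lemma~\ref{lem:beta0} and Proposition~\ref{prop1}) to show that the truncation $\beta_i\mapsto\beta_i^{(c_n)}$ changes the expected crossing times by an exponentially negligible amount. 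A secondary benefit of the reflections, which your argument would also need, is that they make the relevant events literally independent over disjoint blocks, which is what justifies the Borel--Cantelli~II step; without reflections, the crossing times $\tau_k$ all depend on the common far-left environment, so the independence you invoke for your ``good'' events does not transfer to the hitting-time estimate.

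Two further points worth flagging, even granting a repaired MGF bound. First, your linear-coefficient worry at the end (ensuring $\lambda(n-E_\omega[T_{\lceil vn\rceil}])$ dominates) is not resolved by ergodicity alone: on the atypically-good subsequence one needs a quantitative concentration statement for $\sum_k\beta_k\mathbf{I}_{\{M_k\,\text{small}\}}$ (this is the content of Proposition~\ref{prop1}), not just the ergodic theorem, because the Chebyshev step is applied with $\lambda$ of the same order as the typical fluctuation scale. Second, the correct threshold for $\lambda$ is of order $1/(M_k\cdot l_k)$ rather than $1/M_k$ (since $E_\omega[\tau_k]\asymp M_k l_k$); the difference is only logarithmic but must be tracked. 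None of these is insurmountable, but the missing reflection idea is, as it stands, a genuine hole.
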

Together with \eqref{eq0}, we conclude that $\frac{1}{n^{1-1/s}}\log P_\omega(X_n/x<v)$ fluctuates between $0$ and $-\infty$, $\alpha$-a.s.
\begin{remark}
\begin{enumerate}[(i)]
\item Theorem \ref{thm0} was proved in a special case by Gantert in \cite{Gan02} in which $\alpha(\omega_0  \in \{p,1\}) = 1$ for some fixed $p<1/2$. In this case, the environment $\omega$ consists of scattered ``one-way nodes'' (i.e., sites $x$ with $\omega_x = 1$) and all remaining sites have a fixed drift to the left.
We note that the results in \cite{Gan02} also include cases where the distribution $\alpha$ is such that the environment $\omega=\{\omega_x\}_{x\in\mathbb{Z}}$ is ergodic rather than i.i.d.
In the present paper we restrict ourselves to only i.i.d.\ environments but remove the requirement that the support of $\omega_0$ is $\{p,1\}$.
\item In the same setting of Theorem \ref{thm0}, it was shown in \cite{Zei98} that the corresponding annealed probabilities decay polynomially fast. In particular,
    \begin{equation*}
    \lim_{n\to\infty}\frac{1}{\log n}\log \P(X_n < nv)=1-s, \quad \forall v \in (0,v_\alpha).
    \end{equation*}
    Observe that the decay rate of the annealed case is slower than that of the quenched case due to the extra randomness available in choosing an environment $\omega$ from $\Omega$.
\item The precise sub-exponential quenched and annealed rates of decay of the slowdown probabilities has also been studied under the assumption that the environment has ``positive or zero drift;'' that is, $\alpha( \omega_0 \geq 1/2 ) = 1$ and $\alpha_0 := \alpha(\omega_0 = 1/2) \in (0,1)$.
In this case the precise quenched and annealed asymptotics of the slowdown probabilities were given in \cite{PP} an \cite{PPZ}, respectively.
In particular,
\begin{equation}\label{eq:qpzd}
 \lim_{n\to\infty}\frac{(\log n)^2}{n}\log P_\omega(X_n<nv)= - \frac{( \pi \log \alpha_0)^2}{8}\left(1 - \frac{v}{v_\alpha} \right), \quad \forall v \in (0,v_\alpha),
\end{equation}
and
\[
 \lim_{n\to\infty}\frac{1}{n^{1/3}}\log \P(X_n<nv)= -\left\{ \frac{27 (\pi \log \alpha_0)^2}{32}\left(1 - \frac{v}{v_\alpha} \right) \right\}^{1/3} , \quad \forall v \in (0,v_\alpha).
\]
In particular, note that the existence of the quenched limit in \eqref{eq:qpzd} contrasts with Theorem \ref{thm0} and \eqref{eq0}.
\end{enumerate}
\end{remark}

\subsection{Notation and Background}
Before beginning the proof of Theorem \ref{thm0}, we introduce some notation that will be used throughout the remainder of the paper.
First, we note that throughout paper, we will use $c,\,c',\,C,\,C',...$ as a generic positive constants whose values are not important and may differ by one usage to another, and use $C_0,\,C_1,\,C_2,...$ as constants constructed for a specific usage.

Recall that for an environment $\w = (\w_x)_{x\in\Z}$, we have defined $\rho_x = \frac{1-\w_x}{\w_x}$. Then, for any integers $i\leq j$ we define
$$\Pi_{i,j}:=\prod_{k=i}^j\rho_k,\quad W_{i,j}:=\sum_{k=i}^j\Pi_{k,j},\quad R_{i,j}:=\sum_{k=i}^j\Pi_{i,k}
$$
$$W_j:=\sum_{k\leq j}\Pi_{k,j},\quad R_i:=\sum_{k=i}^\infty\Pi_{i,k}.
$$
(Note that $W_i$ and $R_i$ are finite
for all $i\in\mathbb{Z}$ with probability one if $E_\a[\log \rho_0] < 0$.)
We will use these notations frequently in the next sections in order to simplify various expressions under the quenched law. In particular, note that we can obtain a quenched expectation of $\tau_i=T_{i+1}-T_i$ (the time to cross from $i$ to $i+1$) by
\begin{equation}
E_\omega[\tau_i]=1+2W_i,\label{beta}
\end{equation}
which is derived from \cite[(2.1.7) and (2.1.8)]{SZ04}.

Throughout this paper, we will use the method introduced by Sinai of the ``potential'' of an environment which allows us to visualize the environment as a sequence of ``valleys''\cite{Sin84}.
This technique was originally developed by Sinai to study the limiting distributions of recurrent RWRE but has also shown to be useful for transient RWRE \cite{PZ09},\cite{FGP10},\cite{PS13},\cite{ESTZ13}. For a fixed environment $\omega$, let the potential $V(x)$ be the function

$$V(x) =
  \begin{cases}\
    \sum_{i=0}^{x-1}\log\rho_i & \quad \text{if $x\geq1$}\\
    0 & \quad \text{if $x=0$}\\
    -\sum_{i=x}^{-1}\log\rho_i & \quad \text{if $x\leq-1$}.
  \end{cases}$$
The potential $V(x)$ enables us to cut an environment into blocks by ``ladder points'', $\{\nu_i, i\in\mathbb{Z}\},$ defined by
\begin{equation}
\nu_{0}=\sup\{y\leq 0:V(y)<V(k),\forall k<y\},\label{nu0}
\end{equation}
and for $i\geq 1$,
$$\nu_i=\inf\{x>\nu_{i-1}:V(x)< V(\nu_{i-1})\}, \quad\text{and}\quad \nu_{-i}=\sup\{y<\nu_{-i+1}:V(y)<V(k),\forall k<y\}.$$
Equivalently,
$$\nu_0=\sup\{y\leq 0:\Pi_{k,y-1}<1,\forall k<y\}$$
and, for $i\geq 1$,
$$\nu_i=\inf\{x>\nu_{i-1}:\Pi_{\nu_{i-1},x-1}<1\},\quad \text{and}\quad\nu_{-i}=\sup\{y<\nu_{-i+1}:\Pi_{k,y-1}<1,\forall k<y\}.$$
Figure \ref{fig1} is an example of the locations of ladder points on $\mathbb{Z}$.
\begin{figure}
    \centering
    \includegraphics[scale=1]{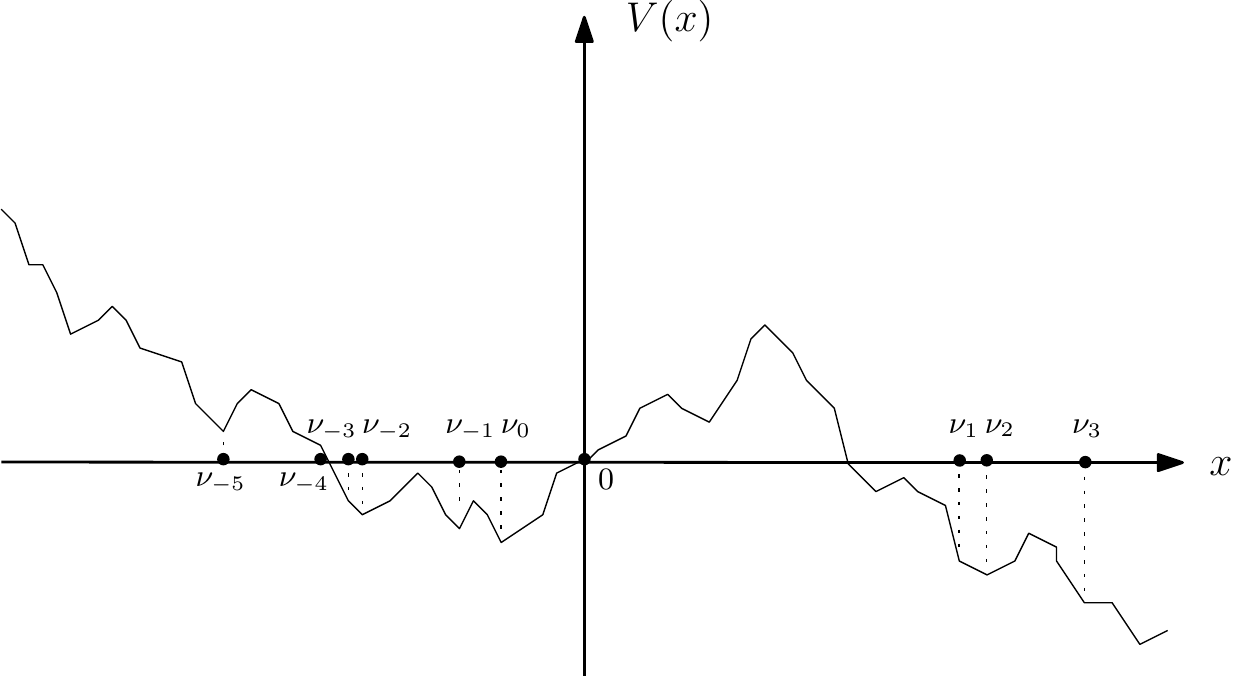}
    \caption{The locations of ladder points $\{\nu_i\}_{i\in\mathbb{Z}}$ on $\mathbb{Z}$.}
    \label{fig1}
\end{figure}
Let us denote the length between consecutive ladder points by
$$l_{i}=\nu_{i+1}-\nu_{i}, \qquad i\in\mathbb{Z},$$
and the exponential height of the potential between the ladder points by
$$M_i:=\max\{\Pi_{\nu_i,j}:\nu_i\leq j\leq \nu_{i+1}\}=\max\{e^{V(j)-V(\nu_{i})}:\nu_i<j\leq \nu_{i+1}\}, \qquad i \in \mathbb{Z}.$$
This exponential height has a crucial role in our analysis because our result shows that the quenched expectation of the crossing times on sections with ``big'' $M_i$ determines which subsequence to take for Theorem \ref{thm0} to be satisfied.  Also, we will show that the sums of the quenched expectation of crossing times on sections with a ``small'' $M_i$ is negligible in the limit.

The ladder points of the environment form a convenient structure for studying the hitting times of the random walk.
Since the environment is i.i.d.\ under the measure $\alpha$, it follows that the blocks of the environment between adjacent ladder points
$\mathfrak{B}_i = \{\omega_x: \, x \in [\nu_i,\nu_{i+1}) \}$
are i.i.d.\ for $i\neq 0$. In particular, $\{l_i\}_{i\neq 0}$ and $\{M_i\}_{i\neq 0}$ are both i.i.d.\ sequences of random variables.
However, the interval of environment between the ladder points on either side of the origin has a different distribution. In particular, under the measure $\alpha$, the random variables $l_0$ and $M_0$ have a different distribution that $l_i$ and $M_i$ with $i\neq 0$.
For this reason, it is convenient to at times work with a related measure on environments $Q$ given by
\[
 Q(\cdot) = \alpha( \cdot | \nu_0 = 0 ).
\]
The sequence $\{\omega_x\}_{x\in\mathbb{Z}}$ is no longer i.i.d.\ under the measure $Q$, but this distribution has the convenient property that the environment is stationary under shifts of the ladder points of the environment.
More precisely, if $\theta$ is the natural left-shift operator on environments given by $(\theta \omega)_x=\omega_{x+1}$, then for any $k\in \mathbb{Z}$ the environments $\omega$ and $\theta^{\nu_k}\omega$ have the same distribution under $Q$.
Moreover, under the measure $Q$ the blocks between adjacent ladder points $\mathfrak{B}_i$ are i.i.d.\ for all $i \in \mathbb{Z}$ with each having the same distribution as $\mathfrak{B}_1$ under the original measure $\alpha$ on environments.
In particular, this implies that $\{l_i\}_{i\in\mathbb{Z}}$ and $\{M_i\}_{i \in \mathbb{Z}}$ are both i.i.d.\ sequences under the measure $Q$.

The distribution $Q$ was first introduced in \cite{PZ09}, and we will frequently refer to estimates under the measure $Q$ that were proved in this paper. We mention here a few of these that we will use throughout the remainder of the paper. First of all, under the measure $Q$ the distances $l_i$ between ladder points have exponential tails. That is, there exist constants $C,C'>0$ such that
\begin{equation}\label{Qltails}
 Q(l_i > x) \leq C e^{-C'x}.
\end{equation}
Secondly, it follows from a result of Iglehart \cite[Theorem 1]{Igl72} that there exists a constant $C_0>0$ such that
\begin{equation}
Q(M_i>x)\sim C_0 x^{-s},\quad\text{ as }x\to\infty.\label{M}
\end{equation}
(Note that it follows from this asymptotic statement that $Q(M_i > x) \leq C x^{-s}$ for all $x>0$ and some $C>0$. At times we will use this upper bound rather than the asymptotics in \eqref{M}.)
One of the main ideas that will be used throughout the paper is that the expected time for the random walk to cross between adjacent ladder points $E_\omega^{\nu_i}[T_{\nu_{i+1}}]$ is roughly comparable to the exponential height of the potential $M_i$ between the ladder points. Thus, we expect that $E_\omega[T_{\nu_1}]$ also has polynomial tails similar to \eqref{M}. Indeed, it was shown in \cite{PZ09} that
\begin{equation}\label{Qbtails}
 Q(E_\omega[T_{\nu_1}]>x)\sim K_\infty x^{-s},\quad\forall x\geq0,
\end{equation}
for some $K_\infty>0$.

We conclude the introduction with an overview of the proof of Theorem \ref{thm0}.
As in \cite{Zei98}, we will study the slowdown probabilities through the hitting times of the random walk. That is, we will prove Theorem \ref{thm0} by proving that $\liminf_{n\to\infty} n^{-1+1/s} \log P_\omega( T_n > u n) = -\infty$ for all $u>1/v_\alpha$.
We will first show that this limit holds for $Q$-a.e.\ environment $\omega$ and then from this deduce that the limit also holds almost surely under the original measure $\alpha$ on environments.
The proof of the quenched slowdown asymptotics for the hitting times is structured as follows.
In Section \ref{sec:mgf}, we give an explicit upper bound of the quenched moment generating function of the hitting times with a one way node placed on a site to the left of the starting point.  This explicit form shows that the sums of quenched expected time between ladder locations control the quenched subexponential tail of hitting times.  In Section \ref{sec:ald}, we will show the sums of the quenched expected crossing time between ladder locations with ``small'' $M_i$ are negligible in the limit under a measure $Q$.
Finally, in Section \ref{sec:qst} we will prove the needed quenched asymptotics of slowdown probabilities for hitting times to complete the proof of Theorem \ref{thm0}.

\section{The Moment Generating Function of Hitting Times with an added Reflection Point}\label{sec:mgf}
In this section, we show an upper bound of the quenched moment generating function of hitting time with a reflection point.  We say a site $x$ is a reflection point if $\omega_x=1$.  Under our assumptions,
if $\a(\w_0 = 1) = 0$ (that is, there are no reflection points in the environment) then for $\a$-a.e.\ environment $\w$ the moment generating function $E_\omega[e^{\lambda\tau_1}]=\infty$ for all $\lambda>0$ \cite{Zei03}.  However, if we place a reflection point to the left of the starting point of the random walk then the moment generating function is finite for small enough $\l>0$ and we will give an upper bound for this modified moment generating function.
For any environment $\w$ and any $m\in \Z$, let $\w(m)$ be the environment $\w$ modified by adding a reflection point at $m$. That is,
\[
 \w(m)_x = \begin{cases}
            \w_x & x \neq m\\
            1 & x = m.
           \end{cases}
\]
The main result in this section is the following lemma which gives an upper bound on quenched moment generating functions of hitting times with a reflection point added to the left of the starting point.

\begin{lem}\label{lem-1}
Let $m \leq n$.  If $\l$ is small enough such that
\begin{equation}\label{lem1assum}
e^{-\lambda}-\sinh(\lambda)\left(E_{\omega(m)}^{m}[T_{n+1}]-(n+1-m)\right)>0
\end{equation}
where $\sinh(\l)=\frac{e^\l-e^{-\l}}{2}$, then for all $m\leq k\leq n$,
\begin{equation}
E_{\omega(m)}[e^{\lambda \tau_k}]\leq e^\lambda\frac{e^{-\lambda}-\sinh(\lambda)\left(E_{\omega(m)}^{m}[T_n]-(k-m)\right)}{e^{-\lambda}-\sinh(\lambda)\left(E_{\omega(m)}^{m}[T_{k+1}]-(k+1-m)\right)}.\label{lem1equation}
\end{equation}
\end{lem}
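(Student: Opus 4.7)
The plan is to set up a recursion for the quenched moment generating function $\phi_k := E_{\omega(m)}[e^{\lambda \tau_k}]$ and then prove the claimed bound by induction on $k$. Conditioning on the first step out of $k$ and using the strong Markov property, we get
\[
 \phi_k \;=\; \omega_k e^\lambda + (1-\omega_k) e^\lambda \phi_{k-1}\phi_k,
\]
because after stepping left to $k-1$ the walk must return to $k$ (an independent copy of $\tau_{k-1}$) before continuing to $k+1$ (an independent copy of $\tau_k$). Solving gives $\phi_k = \omega_k e^\lambda / \bigl(1 - (1-\omega_k) e^\lambda \phi_{k-1}\bigr)$ for $k > m$, with boundary value $\phi_m = e^\lambda$ since $\tau_m = 1$ deterministically under $\omega(m)$.

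Introduce the auxiliary quantity
\[
 f_k \;:=\; e^{-\lambda} - \sinh(\lambda)\bigl(E_{\omega(m)}^m[T_k] - (k-m)\bigr),
\]
so that the target of the induction is $\phi_k \leq e^\lambda f_k/f_{k+1}$ for all $m \leq k \leq n$ (reading the numerator of the stated bound with subscript $k$ in place of $n$, which appears to be a typo). The base case $k = m$ is immediate from $\phi_m = e^\lambda$ and $f_m = f_{m+1} = e^{-\lambda}$. For the inductive step, substituting the hypothesis $\phi_{k-1} \leq e^\lambda f_{k-1}/f_k$ into the recursion for $\phi_k$ and cross-multiplying reduces the desired bound to the algebraic inequality
\[
 f_{k+1} - f_k \;\leq\; \rho_k\bigl(f_k - e^{2\lambda} f_{k-1}\bigr).
\]

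The verification of this last inequality is the technical heart of the lemma and is where the main obstacle lies. Two ingredients drive it: first, by \eqref{beta} applied to $\omega(m)$ (in which $\rho_m = 0$, so the sum defining $W$ truncates), one has $E_{\omega(m)}^k[\tau_k] = 1 + 2 W_{m+1,k}$, so $f_{k+1} - f_k = -2\sinh(\lambda)\, W_{m+1,k}$; second, the one-step recursion $W_{m+1,k} = \rho_k(W_{m+1,k-1} + 1)$. Substituting both into the target inequality and using the identity $1 - e^{2\lambda} = -2e^\lambda\sinh(\lambda)$ reduces it, after cancellation and division by the negative factor $-2\rho_k\sinh(\lambda)$ (valid since $k > m$ gives $\rho_k > 0$ and $\lambda > 0$ gives $\sinh(\lambda) > 0$), to the trivial bound $f_{k-1} \leq e^{-\lambda}$, which holds because $E_{\omega(m)}^m[T_{k-1}] - (k-1-m) \geq 0$. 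Finally, assumption \eqref{lem1assum} together with the monotonicity $f_{j+1} \leq f_j$ (clear from $f_{j+1}-f_j = -2\sinh(\lambda)W_{m+1,j}$) ensures $f_j > 0$ throughout $m \leq j \leq n+1$, so every denominator appearing in the recursion and in the induction step is positive, and the argument is well-defined throughout. The only real obstacle is to hit upon the right auxiliary function $f_k$; once $f_k$ is in hand, the rest is a mechanical algebraic verification.
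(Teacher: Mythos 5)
Your proof is correct and follows the same basic route as the paper's: the same first-step decomposition yielding the continued-fraction recursion $\phi_k = \omega_k e^\lambda/\bigl(1-(1-\omega_k)e^\lambda\phi_{k-1}\bigr)$, and the same induction on $k$. You also correctly spot the typo in the lemma statement (the numerator should involve $E_{\omega(m)}^m[T_k]$, not $T_n$). Where you genuinely improve the organization is in the inner algebraic step: by introducing $f_k$ and using the truncated $W$-recursion $W_{m+1,k}=\rho_k(W_{m+1,k-1}+1)$ together with $1-e^{2\lambda}=-2e^\lambda\sinh\lambda$, you collapse the inductive step to the one-line inequality $f_{k-1}\leq e^{-\lambda}$; the paper instead verifies by a longer direct denominator manipulation (and a slight weakening of $e^{2\lambda}$ to make an exact identity appear). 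You also fold the verification that the geometric series converges, i.e.\ $(1-\omega_k)e^\lambda\phi_{k-1}<1$, into the main induction via the chain $\phi_{k-1}\leq e^\lambda f_{k-1}/f_k$ and $f_k-(1-\omega_k)e^{2\lambda}f_{k-1}\geq\omega_k f_{k+1}>0$; the paper instead postpones this to a second, structurally parallel induction at the end. One small point worth making explicit in your write-up: that last positivity, $f_k-(1-\omega_k)e^{2\lambda}f_{k-1}>0$, is exactly what licenses both the geometric-series formula at stage $k$ and the cross-multiplication in the inductive bound, and it is a consequence of the very algebraic inequality you have just verified together with $\omega_k f_{k+1}>0$; spelling that out closes the only gap in the outline. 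Both proofs are correct; yours is tighter.
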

\begin{remark}\label{rem:mgfub}
 Since $E_{\w(m)}^{m}[T_{n+1}]-(n+1-m) = \sum_{k=m}^{n} ( E_{\w(n)}[\tau_k] - 1)$ is non-decreasing in $n$, if $\l>0$ is such that \eqref{lem1assum} holds then it follows that $e^{-\l}-\sinh(\l)\left(E_{\w(m)}^{m}[T_{k+1}]-(k+1-m)\right)>0$ for all $m\leq k\leq n$, and this is the condition that will be used in the proof below to obtain the upper bound \eqref{lem1equation}.
\end{remark}

\begin{proof}
Clearly, it is enough to prove the statement of the lemma when $m=0$. Therefore, for convenience of notation, let $g(k)=E_{\omega(0)}[e^{\lambda \tau_{k}}]$ for $k\geq 0$.  We need to show that 
\begin{equation}\label{cor02}
 g(k) \leq e^\lambda\frac{e^{-\lambda}-\sinh(\lambda)(E_{\omega(0)}[T_k]-k)}{e^{-\lambda}-\sinh(\lambda)(E_{\omega(0)}[T_{k+1}]-(k+1))}, \quad \text{for } 0\leq k \leq n,
\end{equation}
whenever $\l$ is small enough so that
\begin{equation}\label{lsmall}
 e^{-\l}-\sinh(\l)(E_{\omega(0)}[T_{n+1}]-n-1)>0.
\end{equation}
For $n=k=0$, $g(0)=e^\l$ because a reflection point to the right is placed at a site $0$.
Thus, \eqref{cor02} clearly holds when $n =0$ and so we need only to consider $n\geq 1.$  For any $1\leq k\leq n$, let us decompose $\tau_k$ into a series of crossing times from $k-1$ to $k$ before reaching $k+1$.  Let $N$ be a number of times a walk steps from $k$ to $k-1$ before stepping from $k$ to $k+1$.  Then, $N$ is a geometric random variable with a success probability of $\omega_k$ and
\[\tau_k=N+1+\sum_{i=1}^N\tau_{k-1}^{(i)}\text{ in distribution,}\]
where $\tau_{k-1}^{(i)}$ is an independent copy of $\tau_{k-1}$ for each $i$.  Therefore, we have that
\begin{align}
g(k)
&=\sum_{n=0}^{\infty}E_{\omega(0)}\left[e^{\l (N+1+\sum_{i=1}^N\tau_{k-1}^{(i)})}|N=n\right]P(N=n)\nonumber\\
&=\sum_{n=0}^{\infty}e^{\l(n+1)} g(k-1)^n (1-\omega_k)^n\omega_k\nonumber\\
&=\omega_k e^{\l}\sum_{n=0}^{\infty}\left((1-\omega_k)e^{\l} g(k-1) \right)^n.\nonumber
\end{align}
Here, we claim the following statement and postpone its proof until the end that \eqref{lsmall} is a sufficient condition for
\begin{equation}
(1-\omega_k)e^{\l}g(k-1)<1,
\quad 1\leq k\leq n.\label{lem1geom}
\end{equation}
Then with a sufficiently small $\l$, we obtain a representation of the moment generating function introduced in terms of continued fraction or
\begin{equation}\label{contfrac}
g(k)=\frac{\omega_k e^\lambda}{1-(1-\omega_k) e^\lambda g(k-1)},
\quad 1\leq k\leq n.
\end{equation}
Using  \eqref{contfrac}, we will give a proof of \eqref{cor02} by induction in $k$.  If $k=1$, then
$$g(1)=\frac{\omega_1 e^\lambda}{1-(1-\omega_1) e^\lambda g(0)}=\frac{\omega_1 e^\lambda}{1-(1-\omega_1) e^{2\lambda}}=\frac{1}{e^{-\lambda}+\rho_1 e^{-\lambda}-\rho_1 e^{\lambda}}=\frac{1}{e^{-\lambda}-\sinh(\lambda)(E_{\omega(0)}^0[T_2]-2)},$$
where the last equality is obtained by noting that \eqref{beta} implies $E_{\omega(0)}[T_2] = 2 + 2\rho_1$.
Suppose that the inequality in \eqref{cor02} holds for $g(k-1)$.  Then
\begin{align}
&g(k)=\frac{\omega_{k} e^\l}{1-(1-\omega_{k}) e^\l g(k-1)}=\frac{1}{e^{-\l}+\rho_{k}e^{-\lambda}-\rho_{k}g(k-1)} \nonumber \\
&\quad\leq\frac{1}{e^{-\lambda}+\rho_{k}e^{-\lambda}-\rho_{k}e^\lambda\left(\frac{e^{-\lambda}-\sinh(\lambda)(E_{\omega(0)}[T_{k-1}]-(k-1))}{e^{-\lambda}-\sinh(\lambda)(E_{\omega(0)}[T_{k}]-k)}\right)} \nonumber \\
&\quad=\frac{e^{-\lambda}-\sinh(\lambda)(E_{\omega(0)}[T_{k}]-k)}{(e^{-\lambda}+\rho_{k}e^{-\lambda})(e^{-\lambda}-\sinh(\lambda)(E_{\omega(0)}[T_{k}]-k))-\rho_{k}e^\lambda(e^{-\lambda}-\sinh(\lambda)(E_{\omega(0)}[T_{k-1}]-(k-1)))} \nonumber \\
&\quad=e^\l\frac{e^{-\l}-\sinh(\l)(E_{\omega(0)}[T_{k}]-k)}{(1+\rho_{k})(e^{-\l}-\sinh(\l)(E_{\omega(0)}[T_{k}]-k))-\rho_{k}e^{2\lambda}(e^{-\lambda}-\sinh(\l)(E_{\omega(0)}[T_{k-1}]-(k-1)))} \nonumber \\
&\quad \leq e^\l\frac{e^{-\l}-\sinh(\l)(E_{\omega(0)}[T_{k}]-k)}{(1+\rho_{k})(e^{-\l}-\sinh(\l)(E_{\omega(0)}[T_{k}]-k))-\rho_{k}(e^{\l}-\sinh(\l)(E_{\omega(0)}[T_{k-1}]-(k-1)))}. \label{mgfinduct}
\end{align}
The proof of \eqref{lem1equation} will then be complete if we can show the denominator in \eqref{mgfinduct} is equal to
the denominator in \eqref{cor02}.
To this end, note that \eqref{beta} implies that
\begin{equation}\label{crossform}
E_{\omega(0)}[T_k]=k+2\sum_{j=1}^{k-1}\sum_{i=1}^j\Pi_{i,j}.
\end{equation}
Therefore, the denominator in \eqref{mgfinduct} is equal to
\begin{align}
&(1+\rho_{k})\left(e^{-\lambda}-2 \sinh(\lambda)\sum_{j=1}^{k-1}\sum_{i=1}^j\Pi_{i,j} \right) -\rho_{k}\left(e^{\lambda}-2 \sinh(\lambda) \sum_{j=1}^{k-2}\sum_{i=1}^j\Pi_{i,j}\right)\nonumber \\
&\quad=e^{-\lambda}-\rho_{k}(e^{\lambda}-e^{-\lambda})-2\sinh(\lambda)\left(\sum_{j=1}^{k-1}\sum_{i=1}^j\Pi_{i,j}+\rho_{k}\sum_{j=1}^{k-1}\sum_{i=1}^j\Pi_{i,j}-\rho_{k}\sum_{j=1}^{k-2}\sum_{i=1}^j\Pi_{i,j}\right)\nonumber\\
&\quad=e^{-\lambda}-2 \sinh(\lambda)\left(\rho_{k}+ \sum_{j=1}^{k-1}\sum_{i=1}^j\Pi_{i,j}+ \sum_{i=1}^{k-1}\Pi_{i,k}\right)\nonumber\\
&\quad=e^{-\lambda}-2 \sinh(\lambda)\sum_{j=1}^{k}\sum_{i=1}^j\Pi_{i,j}\nonumber \\
&\quad=e^{-\lambda}-\sinh(\lambda)(E_{\omega(0)}[T_{k+1}]-(k+1)).\label{compeq}
\end{align}
Finally, it remains to prove that \eqref{lsmall} implies \eqref{lem1geom}.  The proof uses a mathematical induction in $k$ which is very similar to the proof of \eqref{cor02}.  If $k=1$, \eqref{lsmall} and Remark \ref{rem:mgfub} implies
$$e^{-\l}>\sinh(\l)(E_{\omega(0)}[T_2]-2) = (e^{\l} - e^{-\l}) \rho_1.$$
Since $\rho_1=(1-\omega_1)/\omega_1$ and $g(0)=e^\l$, this is equivalent to
$$1>e^{2\l}(1-\omega_1)
=e^{\l}g(0)(1-\omega_1).
$$
This verifies \eqref{lem1geom} for $k=1$.
Suppose now that \eqref{lem1geom} holds up to $k-1<n$.  Then, the above proof shows that the inequality
 \eqref{cor02} holds for $g(k-1)$. Therefore,
\begin{align*}
&1-e^{\l}g(k-1)(1-\omega_k)\\
&\quad \geq 1- e^{\l}(1-\omega_k)e^\lambda\frac{e^{-\lambda}-\sinh(\lambda)(E_{\omega(0)}[T_{k-1}]-(k-1))}{e^{-\lambda}-\sinh(\lambda)(E_{\omega(0)}[T_{k}]-k)}\\
&\quad = \frac{e^{-\lambda}-\sinh(\lambda)(E_{\omega(0)}[T_{k}]-k)-e^{2\l}(1-\omega_k)(e^{-\lambda}-\sinh(\lambda)(E_{\omega(0)}[T_{k-1}]-(k-1)))}{e^{-\lambda}-\sinh(\lambda)(E_{\omega(0)}[T_{k}]-k)}\\
&\quad\geq\frac{e^{-\lambda}-\sinh(\lambda)(E_{\omega(0)}[T_{k}]-k)-(1-\omega_k)(e^{\lambda}-\sinh(\lambda)(E_{\omega(0)}[T_{k-1}]-(k-1)))}{e^{-\lambda}-\sinh(\lambda)(E_{\omega(0)}[T_{k}]-k)}\\
&\quad\geq\omega_k\frac{(1+\rho_k)(e^{-\lambda}-\sinh(\lambda)(E_{\omega(0)}[T_{k}]-k))-\rho_k(e^{\lambda}-\sinh(\lambda)(E_{\omega(0)}[T_{k-1}]-(k-1)))}{e^{-\lambda}-\sinh(\lambda)(E_{\omega(0)}[T_{k}]-k)}\\
&\quad=\omega_k\frac{e^{-\lambda}-\sinh(\lambda)(E_{\omega(0)}[T_{k+1}]-(k+1))}{e^{-\lambda}-\sinh(\lambda)(E_{\omega(0)}[T_{k}]-k)},
\end{align*}
where the last equality comes from \eqref{compeq}.  Since $E_\a[\log \rho_0] < 0$ implies $\w_k>0$ and Remark \ref{rem:mgfub} implies
$$\frac{e^{-\lambda}-\sinh(\lambda)(E_{\omega(0)}[T_{k+1}]-(k+1))}{e^{-\lambda}-\sinh(\lambda)(E_{\omega(0)}[T_{k}]-k)}>0,$$
we get
that $1>e^{\l}g(k-1)(1-\omega_k)$.
\end{proof}


As a corollary of Lemma \ref{lem-1} we obtain the following upper bound for the quenched moment generating function of the time to cross an interval with a reflection point at some point to the left of $X_0$.
\begin{cor}\label{cor-1}
Suppose $m<k_0<k_1$ for any $m,k_0,k_1\in\mathbb{Z}.$  If $\l>0$ is sufficiently small enough such that
\begin{equation}
e^{-\l}-\sinh(\l)E_{\omega(m)}^{m}[T_{k_1}]>0,\label{cor-1assum}
\end{equation}
then,
\begin{equation}
E_{\omega(m)}^{k_0}[e^{\lambda T_{k_1}}]\leq\exp\left(\frac{\sinh(\lambda)E_{\omega(m)}^{k_0}[T_{k_1}]}{e^{-\l}-\sinh(\l)E_{\omega(m)}^m[T_{k_1}]}\right).\label{cor-1main}
\end{equation}
\end{cor}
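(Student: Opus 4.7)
The strategy is to decompose $T_{k_1}$ starting from $k_0$ as the sum of the one-step crossing times $\tau_{k_0}, \tau_{k_0+1}, \ldots, \tau_{k_1-1}$, apply Lemma \ref{lem-1} to each resulting factor of the moment generating function, and exploit the telescoping structure. By the strong Markov property, the crossing times $\tau_k$ are independent under the quenched law $P_{\omega(m)}^{k_0}$, so I would first write
$$E_{\omega(m)}^{k_0}[e^{\lambda T_{k_1}}] = \prod_{k=k_0}^{k_1-1} E_{\omega(m)}[e^{\lambda \tau_k}].$$
Setting $a_k := e^{-\lambda} - \sinh(\lambda)\bigl(E_{\omega(m)}^m[T_k] - (k-m)\bigr)$, the hypothesis \eqref{cor-1assum} together with the obvious bound $E_{\omega(m)}^m[T_{k_1}] \geq k_1 - m$ forces $a_{k_1} \geq e^{-\lambda} - \sinh(\lambda)E_{\omega(m)}^m[T_{k_1}] > 0$, so the hypothesis \eqref{lem1assum} of Lemma \ref{lem-1} is satisfied with $n = k_1 - 1$. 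Applying that lemma to each factor, the product telescopes to
$$E_{\omega(m)}^{k_0}[e^{\lambda T_{k_1}}] \leq \prod_{k=k_0}^{k_1-1} e^\lambda \frac{a_k}{a_{k+1}} = e^{\lambda(k_1-k_0)}\frac{a_{k_0}}{a_{k_1}}.$$

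To convert this into the form of \eqref{cor-1main}, I would take logarithms and use the elementary bound $\log(1+x) \leq x$. A short computation, together with the strong Markov identity $E_{\omega(m)}^m[T_{k_1}] - E_{\omega(m)}^m[T_{k_0}] = E_{\omega(m)}^{k_0}[T_{k_1}]$, yields
$$a_{k_0} - a_{k_1} = \sinh(\lambda)\bigl(E_{\omega(m)}^{k_0}[T_{k_1}] - (k_1 - k_0)\bigr),$$
and hence
$$\log \frac{a_{k_0}}{a_{k_1}} \leq \frac{\sinh(\lambda)\bigl(E_{\omega(m)}^{k_0}[T_{k_1}] - (k_1-k_0)\bigr)}{a_{k_1}}.$$
Writing $D := e^{-\lambda} - \sinh(\lambda)E_{\omega(m)}^m[T_{k_1}]$ and using the lower bound $a_{k_1} \geq D > 0$ together with the elementary inequalities $\sinh(\lambda) \geq \lambda$ and $D \leq e^{-\lambda} \leq 1$ (which combine to give $\lambda \leq \sinh(\lambda)/D$), one would then conclude
$$\log E_{\omega(m)}^{k_0}[e^{\lambda T_{k_1}}] \leq \lambda(k_1 - k_0) + \frac{\sinh(\lambda)\bigl(E_{\omega(m)}^{k_0}[T_{k_1}] - (k_1-k_0)\bigr)}{D} \leq \frac{\sinh(\lambda)\, E_{\omega(m)}^{k_0}[T_{k_1}]}{D},$$
which is exactly \eqref{cor-1main}.

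The main subtlety in this plan is the mismatch between the denominator $a_{k_1}$ naturally produced by iterating Lemma \ref{lem-1} and the cleaner denominator $D$ appearing in the statement of the corollary; they differ precisely by $\sinh(\lambda)(k_1 - m) > 0$. Replacing $a_{k_1}$ by $D$ is legitimate because the hypothesis \eqref{cor-1assum} ensures $D > 0$, and after this replacement the absorption of the leftover linear term $\lambda(k_1-k_0)$ into the single exponential bound is immediate from $\lambda \leq \sinh(\lambda)$. Everything else is direct bookkeeping built on top of Lemma \ref{lem-1}.
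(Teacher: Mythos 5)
Your proof is correct and follows essentially the same route as the paper: decompose $T_{k_1}$ into independent one-step crossing times, apply Lemma \ref{lem-1} to each factor, telescope, then pass from $1+x$ to $e^x$ (you do it via $\log(1+x)\leq x$, which is the same estimate), and finally absorb the $\lambda(k_1-k_0)$ term and replace $a_{k_1}$ by the cleaner denominator $D$ using $\sinh\lambda\geq\lambda$ and $0<D\leq e^{-\lambda}\leq 1$. The only differences are cosmetic (order of bounding $a_{k_1}\geq D$ versus handling the linear term, and the incidental remark about $E_{\omega(m)}^m[T_{k_1}]\geq k_1-m$, which is harmless but unnecessary since $a_{k_1}-D=\sinh(\lambda)(k_1-m)>0$ already).
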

\begin{proof}
First of all, if $\l>0$ is small enough so that \eqref{cor-1assum} holds then Remark \ref{rem:mgfub} implies that
\[
e^{-\l}-\sinh(\l)\left(E_{\omega(m)}^{m}[T_{i+1}]-(i+1-m)\right)>0,\text{ for all } k_0\leq i\leq k_1-1.
\]
By Lemma \ref{lem-1} and using the fact that the sequence $\{\tau_i\}_{k_0\leq i\leq k_1-1}$ is independent under $P_{\omega(m)}$, we have
\begin{align}
E_{\omega(m)}^{k_0}[e^{\lambda T_{k_1}}]&=E_{\omega(m)}[e^{\lambda \sum_{i=k_0}^{k_1-1}\tau_i}]
=\prod_{i=k_0}^{k_1-1}E_{\omega(m)}[e^{\l\tau_i}]\nonumber\\
&\leq\prod_{i=k_0}^{k_1-1}e^\lambda\frac{e^{-\lambda}-\sinh(\lambda)\left(E_{\omega(m)}^{m}[T_{i}]-(i-m)\right)}{e^{-\lambda}-\sinh(\lambda)\left(E_{\omega(m)}^{m}[T_{i+1}]-(i+1-m)\right)}\nonumber\\
&=e^{\l (k_1-k_0)}\frac{e^{-\lambda}-\sinh(\lambda)\left(E_{\omega(m)}^{m}[T_{k_0}]-(k_0-m)\right)}{e^{-\lambda}-\sinh(\lambda)\left(E_{\omega(m)}^{m}[T_{k_1}]-(k_1-m)\right)}\nonumber\\
&=e^{\l (k_1-k_0)}\left(1+\frac{\sinh(\lambda)\left(E_{\omega(m)}^{k_0}[T_{k_1}]-(k_1-k_0)\right)}{e^{-\lambda}-\sinh(\lambda)\left(E_{\omega(m)}^{m}[T_{k_1}]-(k_1-m)\right)}\right).\nonumber
\end{align}
Since $1+x\leq e^x$ for any $x\in\mathbb{R}$
we can conclude that
\begin{align*}
&e^{\l (k_1-k_0)}\left(1+\frac{\sinh(\lambda)\left(E_{\omega(m)}^{k_0}[T_{k_1}]-(k_1-k_0)\right)}{e^{-\lambda}-\sinh(\lambda)\left(E_{\omega(m)}^{m}[T_{k_1}]-(k_1-m)\right)}\right)\\
&\qquad\leq\exp\left(\l (k_1-k_0)+\frac{\sinh(\lambda)\left(E_{\omega(m)}^{k_0}[T_{k_1}]-(k_1-k_0)\right)}{e^{-\lambda}-\sinh(\lambda)\left(E_{\omega(m)}^{m}[T_{k_1}]-(k_1-m)\right)}\right)\\
&\qquad\leq\exp\left(\frac{\l (k_1-k_0)+\sinh(\lambda)\left(E_{\omega(m)}^{k_0}[T_{k_1}]-(k_1-k_0)\right)}{e^{-\lambda}-\sinh(\lambda)\left(E_{\omega(m)}^{m}[T_{k_1}]-(k_1-m)\right)}\right)\\
&\qquad\leq\exp\left(\frac{\sinh(\lambda)E_{\omega(m)}^{k_0}[T_{k_1}]}{e^{-\lambda}-\sinh(\lambda)E_{\omega(m)}^{m}[T_{k_1}]}\right),
\end{align*}
where in the second inequality we used that the denominator inside the exponent is at most $e^{-\l} \leq 1$, and in the last inequality we used that $\l < \sinh(\l)$ for $\l > 0$.
This completes the proof of the corollary.
\end{proof}


\section{Bounds for quenched expected crossing times}\label{sec:ald}
From the results of the previous section, we see that the quenched expected crossing times are key to obtaining bounds on the quenched moment generating functions of hitting times.
In particular, it will be necessary to obtain control on how small $\l>0$ must be for the bounds given by Corollary \ref{cor-1} to be valid.
In order to consider this problem in more general setting, let us define a sequence $a_n = n^{\eta_1}$ for some $\eta_1>0$, and study $E_\omega[T_{\nu_{a_n}}]$ under the measure $Q$.  First, we decompose $E_{\omega}[T_{\nu_{a_n}}]$ to the series of crossing time between consecutive ladder locations such that
$$E_{\omega}\left[T_{\nu_{a_n}}\right] = \sum_{i=0}^{\nu_{a_n-1}}E_{\omega}^{\nu_i}\left[T_{\nu_{i+1}}\right].$$ For simplicity, we will introduce some notation.
\[
 \beta_i = E_\omega^{\nu_i}\left[ T_{\nu_{i+1}} \right], \qquad i \in \mathbb{Z}.
\]
Under the measure Q, recall that $\nu_0 = 0$ and that $\theta^{\nu_i}\omega$ has a same distribution for any $i\in\mathbb{Z}$.  As a result, $\{\beta_i\}_{i\in\mathbb{Z}}$ is stationary under $Q$.  Next, we determine i.i.d components in $i$ which mainly contribute to the size of each $\beta_i$.  It turns out that $\beta_i$ is roughly comparable to $M_i$.  Suppose $b_n = n^{\eta_2}$ for some $\eta_1>\eta_2>0$. The main goal of this section is to show that the size of $\beta_i$ with $M_i\leq b_n$ is small enough that the sums of such $\beta_i$'s is unlikely to play a large role in the size of $E_{\omega}\left[T_{\nu_{a_n}}\right]$.  Therefore, the large deviation events are primarily dependent on the $\beta_i$ for indices $i$ with $M_i>b_n$.  The following Proposition is the main result of this section.
\begin{prop}\label{prop1}
Let $a_n= n^{\eta_1}$ and $b_n= n^{\eta_2}$ for some $\eta_1>\eta_2>0$.  Let Assumption \ref{asm:s} and \ref{asm:s2} hold.  Then, for any $\epsilon>0$ there exist constants $C,\, C'$ such that
\begin{equation}\label{prop1:ine}
Q\left(\sum_{i=0}^{a_n-1}(\beta_i\mathbf{I}_{\{M_i\leq b_n\}}-E_Q[\beta_0])>a_n \epsilon\right) \leq C'a_n e^{-C(\log n)^2}.
\end{equation}
\end{prop}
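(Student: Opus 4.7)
My plan is to prove the concentration estimate \eqref{prop1:ine} by a truncation argument followed by Bernstein's inequality, exploiting the fact that under $Q$ the blocks $\mathfrak{B}_i$ are i.i.d. The key technical difficulty is that $\beta_i$ is \emph{not} measurable with respect to the single block $\mathfrak{B}_i$: using $E_\omega[\tau_k]=1+2W_k$ together with the identity $W_k = W_{\nu_i,k}+\Pi_{\nu_i,k}W_{\nu_i-1}$ valid for $\nu_i\le k<\nu_{i+1}$ gives the decomposition
\[
\beta_i \;=\; \widehat{\beta}_i \;+\; 2\,W_{\nu_i-1}\,R_{\nu_i,\nu_{i+1}-1},\qquad \widehat{\beta}_i := l_i + 2\sum_{k=\nu_i}^{\nu_{i+1}-1}W_{\nu_i,k},
\]
in which $\widehat{\beta}_i$ and $R_{\nu_i,\nu_{i+1}-1}$ depend only on $\mathfrak{B}_i$ (hence form i.i.d.\ sequences under $Q$), while the ``leftover'' $W_{\nu_i-1}$ encodes the influence of all earlier blocks.

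First I would exclude the event $G_n^c:=\bigcup_{i=0}^{a_n-1}\{l_i>L_n\}$ with $L_n:=C_1(\log n)^2$: by the exponential tail bound \eqref{Qltails} and a union bound, $Q(G_n^c)\le Ca_n e^{-C'C_1(\log n)^2}$, which is already of the form asserted on the right of \eqref{prop1:ine}. On $G_n\cap\{M_i\le b_n\}$, the crude bounds $W_{\nu_i,k}\le l_iM_i$ and $\Pi_{\nu_i,k}\le M_i$ give $\widehat{\beta}_i\mathbf{I}_{\{M_i\le b_n\}}\le 2L_n^2 b_n$ and $R_{\nu_i,\nu_{i+1}-1}\mathbf{I}_{\{M_i\le b_n\}}\le L_n b_n$. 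The ``local'' sum $\sum_i\widehat{\beta}_i\mathbf{I}_{\{M_i\le b_n,\,l_i\le L_n\}}$ is then a sum of i.i.d.\ bounded random variables, with per-term variance $O(b_n^{\max(0,2-s)})$ by \eqref{M} (or $O(1)$ when $s>2$), so Bernstein's inequality bounds its deviation by $\exp\!\bigl(-c\,a_n\epsilon^2/(b_n^{\max(0,2-s)}+L_n^2 b_n\epsilon)\bigr)$. Because $a_n/b_n=n^{\eta_1-\eta_2}$ grows polynomially while $L_n$ is polylogarithmic, this Bernstein exponent dominates $(\log n)^2$ for all sufficiently large $n$, giving the required bound on this piece.

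The main obstacle is the ``cross-term'' $2\sum_{i=0}^{a_n-1}W_{\nu_i-1}\,R_{\nu_i,\nu_{i+1}-1}\mathbf{I}_{\{M_i\le b_n\}}$: the factors $W_{\nu_i-1}$ are dependent across $i$ and, being Kesten-type perpetuities, have only polynomial tails, so a naive union bound over $i$ cannot produce an $e^{-C(\log n)^2}$ estimate. To handle this I would use the telescoping identity
\[
W_{\nu_i-1}\,R_{\nu_i,\nu_{i+1}-1}\;=\;\sum_{j<\nu_i}\sum_{k=\nu_i}^{\nu_{i+1}-1}\Pi_{j,k},
\]
and swap the order of summation so that the cross-term rewrites as a sum over lattice sites $j<\nu_{a_n}$ of quantities of the form $R_j$-type local functionals; grouping these $j$'s by the ladder block they belong to produces an i.i.d.\ structure under $Q$ with the same tail exponent $s$ as $\beta_0$ itself, to which the same truncation-plus-Bernstein argument (again using $a_n\gg b_n$ polynomially) applies. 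Finally, the centering discrepancy $a_n\bigl|E_Q[\beta_0]-E_Q[(\widehat{\beta}_0+2W_{\nu_0-1}R_{\nu_0,\nu_1-1})\mathbf{I}_{\{M_0\le b_n,\,l_0\le L_n\}}]\bigr|=O(a_n b_n^{1-s})+O(a_n e^{-cL_n})$ is negligible compared to $a_n\epsilon$ (since $s>1$), so replacing the Bernstein centerings by $E_Q[\beta_0]$ is harmless; assembling the three pieces gives \eqref{prop1:ine}.
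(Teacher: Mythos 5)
Your first two steps are on solid ground and track the paper closely: the decomposition $\beta_i=\widehat\beta_i+2W_{\nu_i-1}R_{\nu_i,\nu_{i+1}-1}$ is exactly the paper's \eqref{dec1}, the exclusion of $\{l_i>C_1(\log n)^2\}$ is used there too, and your Bernstein bound on the bounded i.i.d.\ piece is a legitimate substitute for the paper's Fuk--Nagaev estimate \eqref{lem:intro2} (both yield a $(\log n)^2$-scale exponent because $a_n/b_n$ grows polynomially while the truncation levels are polylogarithmic).

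The genuine gap is in the cross-term. After your telescoping and Fubini, the term indexed by a site $j$ in block $\ell$ is
\[
\Pi_{j,\nu_{\ell+1}-1}\,\sum_{i=\ell+1}^{a_n-1}\Pi_{\nu_{\ell+1},\nu_i-1}\,R_{\nu_i,\nu_{i+1}-1}\mathbf{I}_{\{M_i\leq b_n\}},
\]
and summing over $j$ in block $\ell$ replaces the first factor by $W_{\nu_\ell,\nu_{\ell+1}-1}$. The resulting blockwise summand thus contains a factor (the inner sum over $i>\ell$) which depends on the environment in \emph{all} blocks $\ell+1,\dots,a_n-1$; in particular these blockwise pieces are neither i.i.d.\ nor short-range dependent, and each one is a Kesten-type perpetuity with only polynomial tails. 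Your assertion that ``grouping these $j$'s by the ladder block they belong to produces an i.i.d.\ structure under $Q$'' is therefore false, and it is precisely this long-range dependence that prevents a straight Bernstein/union-bound argument from reaching an $e^{-C(\log n)^2}$ rate. The paper's fix is the ingredient your outline lacks: replace $W_{\nu_i-1}$ by the memory-truncated $W_{\nu_{i-(c_n-1)},\nu_i-1}$ with $c_n=\lfloor(\log n)^2\rfloor$ (the difference being controlled by $E_Q[\Pi_{0,\nu_1-1}]^{c_n-1}$, which is $e^{-c(\log n)^2}$), and then split the remaining sum into $c_n$ subsums whose summands are genuinely independent so the exponential-moment bound applies. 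Without some such decoupling device, the cross-term step in your proposal does not go through.
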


The remainder of this section is devoted to the proof of Proposition \ref{prop1}.  First of all, let $c_n := \lfloor(\log n)^2\rfloor$ and define $\beta_i^{(c_n)}$ to be a quenched expected crossing time from $\nu_i$ to $\nu_{i+1}$ with a reflection point located at $\nu_{i-(c_n-1)}$.  That is,
\[\b_i^{(c_n)}:=E_{\omega(\nu_{i-(c_n-1)})}^{\nu_i}\left[T_{\nu_{i+1}}\right].\]
The strategy of proof for \eqref{prop1:ine} is first to show that the sums of differences of $\beta_i\mathbf{I}_{\{M_i\leq b_n\}}$ and $\beta_i^{(c_n)}\mathbf{I}_{\{M_i\leq b_n\}}$ are negligible in the limit, and then prove the inequality of \eqref{prop1:ine} with $\b_i$ replaced by $\b_i^{(c_n)}$.  More precisely, we have
\begin{align}
&Q\left(\sum_{i=0}^{a_n-1}(\b_i\mathbf{I}_{\{M_i\leq b_n\}}-E_Q[\beta_0])>a_n \epsilon\right)\nonumber\\
&\quad\leq Q\left(\sum_{i=0}^{a_n-1}(\b_i-\b_i^{(c_n)})\mathbf{I}_{\{M_i\leq b_n\}}>\frac{\epsilon}{2} a_n\right) +  Q\left(\sum_{i=0}^{a_n-1}(\beta_i^{(c_n)}\mathbf{I}_{\{M_i\leq b_n\}}-E_Q[\beta_0])> \frac{\epsilon}{2}a_n\right),\label{prop1:ine2}
\end{align}
and we will show that each term in \eqref{prop1:ine2} is bounded above by $C a_n e^{-C'(\log n)^2}$.  The following lemma does this for the first term of \eqref{prop1:ine2}.
\begin{lem}\label{lem:beta0}
For any $\epsilon>0$, there exist $C,\,C'>0$ such that
$$Q\left(\sum_{i=0}^{a_n-1}(\b_i-\b_i^{(c_n)})\mathbf{I}_{\{M_i\leq b_n\}}>\frac{\epsilon}{2} a_n\right)<C'a_n e^{-C(\log n)^2}.$$
\end{lem}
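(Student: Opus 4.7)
The plan is to derive an explicit formula for $\b_i - \b_i^{(c_n)}$ that isolates a factor which is super-polynomially small in $n$, and then apply Markov's inequality.

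First, using \eqref{beta} together with the analogous formula $E_{\w(m)}[\tau_j] = 1 + 2 \sum_{k=m+1}^{j} \Pi_{k,j}$ for an environment with a reflecting barrier at $m < j$ (obtained by first-step analysis, or by truncating the sum defining $W_j$), with $m := \nu_{i-(c_n-1)}$ we get
\[
\b_i - \b_i^{(c_n)} = 2 \sum_{j=\nu_i}^{\nu_{i+1}-1} \sum_{k \leq m} \Pi_{k,j} = 2(1 + W_{m-1})\, \Pi_{m, \nu_i-1}\, R_{\nu_i, \nu_{i+1}-1},
\]
where the second equality uses the factorizations $\Pi_{k,j} = \Pi_{k,m-1} \Pi_{m,j}$ for $k \leq m \leq j$ and $\Pi_{m,j} = \Pi_{m, \nu_i-1} \Pi_{\nu_i, j}$ for $j \geq \nu_i$. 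On the event $\{M_i \leq b_n\}$ we have $R_{\nu_i, \nu_{i+1}-1} \leq l_i M_i \leq l_i b_n$, so
\[
(\b_i - \b_i^{(c_n)}) \mathbf{I}_{\{M_i \leq b_n\}} \leq 2 b_n (1+ W_{m-1})\, \Pi_{m,\nu_i-1}\, l_i\, \mathbf{I}_{\{M_i \leq b_n\}}.
\]

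Under $Q$, the three factors $(1+W_{m-1})$, $\Pi_{m,\nu_i-1}$, and $l_i \mathbf{I}_{\{M_i \leq b_n\}}$ depend on disjoint collections of ladder blocks, namely those indexed by $j \leq i - c_n$, by $i - c_n + 1 \leq j \leq i-1$, and by $j = i$, respectively; hence they are independent by the i.i.d.\ block structure under $Q$. By ladder-shift stationarity, $E_Q[1+W_{m-1}] = 1 + E_Q[W_{-1}]$ is a finite constant (a standard consequence of Assumption \ref{asm:s}; cf.\ \cite{PZ09}) and $E_Q[l_i \mathbf{I}_{\{M_i \leq b_n\}}] \leq E_Q[l_i] < \infty$ by \eqref{Qltails}. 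The key estimate is
\[
E_Q[\Pi_{m,\nu_i-1}] = E_Q\!\left[e^{-\sum_{k=1}^{c_n-1} H_k}\right] = (E_Q[e^{-H_1}])^{c_n - 1} \leq \gamma^{c_n-1},
\]
where $H_k := V(\nu_{k-1}) - V(\nu_k) > 0$ are the i.i.d.\ ladder drops under $Q$ and $\gamma := E_Q[e^{-H_1}] \in (0,1)$ since $H_1 > 0$ a.s. Combining gives $E_Q\bigl[(\b_i - \b_i^{(c_n)}) \mathbf{I}_{\{M_i \leq b_n\}}\bigr] \leq C b_n \gamma^{c_n}$.

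Finally, by linearity and Markov's inequality,
\[
Q\!\left(\sum_{i=0}^{a_n-1} (\b_i - \b_i^{(c_n)}) \mathbf{I}_{\{M_i \leq b_n\}} > \tfrac{\epsilon}{2} a_n\right) \leq \frac{2 a_n \cdot C b_n \gamma^{c_n}}{\epsilon a_n} = \frac{2C}{\epsilon} b_n \gamma^{c_n}.
\]
Since $b_n = n^{\eta_2}$ is polynomial while $\gamma^{c_n} = e^{-|\log \gamma|(\log n)^2 + O(1)}$ is super-polynomially small, the right-hand side is bounded by $C' a_n e^{-C(\log n)^2}$ for any $C < |\log \gamma|$ and $n$ large. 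The main substantive point is the factorization in the first display and the verification that its three factors are independent under $Q$; the latter crucially uses that under $Q$ every block $\mathfrak{B}_j$ (including $\mathfrak{B}_0$) is i.i.d., in contrast to the situation under $\a$.
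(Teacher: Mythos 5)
Your proof is correct and reaches the same key formula and estimate as the paper: both derive the factorization $\b_i - \b_i^{(c_n)} = 2(1+W_{\nu_{i-(c_n-1)}-1})\,\Pi_{\nu_{i-(c_n-1)},\nu_i-1}\,R_{\nu_i,\nu_{i+1}-1}$, use $R_{\nu_i,\nu_{i+1}-1}\mathbf{I}_{\{M_i\leq b_n\}}\leq l_i b_n$, and arrive at $E_Q[\Pi_{0,\nu_1-1}]^{c_n-1}$ as the super-polynomially small factor. The one genuine difference is the final step. The paper first truncates $l_i$ and $1+W_{\nu_i-1}$ at $(\log n)^2$, paying an exponentially small probability via a union bound, and then applies Chebyshev only to the remaining sum of $\Pi_{\nu_{i-(c_n-1)},\nu_i-1}$'s; this sidesteps any dependence structure between the three factors. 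You instead observe that the three factors depend on disjoint blocks of ladder intervals, hence are independent under $Q$, and apply Markov's inequality directly to the untruncated sum. Your route is cleaner and actually gives a slightly stronger conclusion (no extraneous $a_n$ prefactor), at the cost of having to justify the block-wise independence, which you do correctly. One tiny slip: you write $\gamma^{c_n}$ where the exponent is $c_n-1$ (there are $c_n-1$ ladder blocks between $\nu_{i-c_n+1}$ and $\nu_i$), but this is immaterial to the asymptotics.
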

\begin{proof}
Using \eqref{beta}, we may write
\begin{align}
\beta_i&=\sum_{j=\nu_i}^{\nu_{i+1}-1}(1+2W_j)\nonumber\\
&=l_i+2\sum_{j=\nu_i}^{\nu_{i+1}-1} W_{\nu_i,j}+2W_{\nu_i-1}R_{\nu_i,\nu_{i+1}-1}.\label{dec1}
\end{align}
Similarly, applying \eqref{beta} with a reflection at $\omega_{\nu_{i-(c_n-1)}}$ (so that $\rho_{\nu_{i-(c_n-1)}} = 0$) gives
\begin{equation}\label{beta:trunc}
\b_i^{(c_n)} = l_i+2\sum_{j=\nu_i}^{\nu_{i+1}-1}W_{\nu_i,j}+2R_{\nu_i,\nu_{i+1}-1}W_{\nu_{i-(c_n-1)},\nu_i-1}.
\end{equation}
Then by \eqref{dec1} and \eqref{beta:trunc}, we get
\[\b_i-\b_i^{(c_n)} = 2(1+W_{\nu_{i-{(c_n-1)}}-1})\Pi_{\nu_{i-(c_n-1)},\nu_i-1}R_{\nu_i,\nu_{i+1}-1}\]
Hence,
\begin{align*}
&Q\left(\sum_{i=0}^{a_n-1}\left(\beta_i-\beta_i^{(c_n)}\right)\mathbf{I}_{\{M_i\leq b_n\}}>a_n \frac{\epsilon}{2}\right)\\
&\quad= Q\left(\sum_{i=0}^{a_n-1}(1+W_{\nu_{i-{(c_n-1)}}-1})\Pi_{\nu_{i-(c_n-1)},\nu_i-1}R_{\nu_i,\nu_{i+1}-1}\mathbf{I}_{\{M_i\leq b_n\}}>a_n \frac{\epsilon}{4}\right)
\end{align*}
Since $\Pi_{i_1,i_2}\leq M_i$ for any $i_1,\,i_2$ such that $\nu_i\leq i_1\leq i_2\leq \nu_{i+1}-1,$
$$R_{\nu_i,\nu_{i+1}-1}\mathbf{I}_{\{M_i\leq b_n\}}=\sum_{k=\nu_i}^{\nu_{i+1}-1}\Pi_{\nu_i,k}\mathbf{I}_{\{M_i\leq b_n\}}\leq l_i M_i\mathbf{I}_{\{M_i\leq b_n\}}\leq l_i b_n.$$
Also, from \eqref{Qltails} and Lemma 2.2 in \cite{PZ09} there exist $c,\, c'>0$ such that
\begin{equation}\label{lwbound}
Q(l_0>x)< c e^{-c'x},\text{ and }Q(1+W_{-1}>x)< c e^{-c'x}.
\end{equation}
Applying \eqref{lwbound} and Chebyshev Inequality,
\begin{align*}
&Q\left(\sum_{i=0}^{a_n-1}(1+W_{\nu_{i-{(c_n-1)}}-1})\Pi_{\nu_{i-(c_n-1)},\nu_i-1}R_{\nu_i,\nu_{i+1}-1}\mathbf{I}_{\{M_i\leq b_n\}}>a_n \frac{\epsilon}{4}\right)\\
&\quad\leq Q\left(\exists i\in[0,a_n-1]:l_i>(\log n)^2\right) + Q\left(\exists i\in[-c_n+1,a_n-c_n]:1 + W_{\nu_i-1}>(\log n)^2\right)\\
&\qquad + Q\left(\sum_{i=0}^{a_n-1}\Pi_{\nu_{i-(c_n-1)},\nu_i-1}>\frac{\epsilon a_n}{4(\log n)^4b_n}\right)\\
&\quad\leq 2c a_n e^{-c'(\log n)2} + \frac{4(\log n)^4b_n}{\epsilon} E_Q[\Pi_{0,\nu_1-1}]^{c_n-1}\leq C a_n e^{-C(\log n)^2}\text{ for some }C,\,C'>0,
\end{align*}
where the second to last inequality comes from the fact that
$E_Q[\Pi_{\nu_{i-k},\nu_i-1}] = E_Q[\Pi_{0,\nu_1-1}]$ (since blocks between ladder points are i.i.d. under $Q$),
and the last inequality follows from
 $E_Q[\Pi_{0,\nu_1-1}]<1$, $b_n(\log n)^4\ll a_n$, and $c_n=\lfloor(\log n)^2\rfloor$.
\end{proof}
Regarding the second term of \eqref{prop1:ine2}, we will begin by decomposing $\b_i^{(c_n)}$ in a way that will help us to get control the dependence in the sequence.  Recall the decomposition of $\beta_i^{(c_n)}$ in \eqref{beta:trunc}.  Observe that the first two terms are i.i.d as sequences indexed by $i$, and the last term is stationary in $i$ but dependent under measure $Q$.  Since \eqref{beta} implies that
\[E_Q[\beta_0] = E_Q[l_0]+2E_Q\left[\sum_{j=0}^{\nu_1-1} W_{0,j}\right]+2E_Q[W_{-1}R_{0,\nu_1-1}],\]
we can bound the second term of \eqref{prop1:ine2} by three different probabilities such that
\begin{align}
&Q\left(\sum_{i=0}^{a_n-1}(\beta_i^{(c_n)}\mathbf{I}_{\{M_i\leq b_n\}}-E_Q[\beta_0])> \frac{\epsilon}{2}a_n\right)\nonumber\\
&\quad\leq Q\left(\sum_{i=0}^{a_n-1}(l_i-E_Q[l_0])>a_n\frac{\epsilon}{6}\right)\nonumber\\
&\qquad + Q\left(\sum_{i=0}^{a_n-1}\left(\sum_{j=\nu_i}^{\nu_{i+1}-1} W_{\nu_i,j}\mathbf{I}_{\{M_i<b_n\}}-E_Q\left[\sum_{j=\nu_0}^{\nu_1-1} W_{\nu_0,j}\right]\right)>\frac{\epsilon}{12}a_n \right)\nonumber\\
&\qquad + Q\left(\sum_{i=0}^{a_n-1}\left\{W_{\nu_{i-(c_n-1)},\nu_i-1}R_{\nu_i,\nu_{i+1}-1}\mathbf{I}_{\{M_i\leq b_n\}}-E_Q[W_{-1}R_{0,\nu_1-1}]\right\} >a_n\frac{\epsilon}{12}\right).\nonumber
\end{align}
The proof of Proposition \ref{prop1} then follows easily from the following three lemmas.
\begin{lem}\label{lem:beta1}
For any $\epsilon>0$, there exists $c(\epsilon)>0$ such that
$$Q\left(\sum_{i=0}^{a_n-1}(l_i-E_Q[l_0])>a_n \epsilon\right)=O\left(e^{-c(\epsilon)a_n}\right).$$
\end{lem}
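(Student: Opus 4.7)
The plan is to recognize this as a standard Cram\'er-type large deviation bound for i.i.d.\ random variables with a finite exponential moment. Two facts from the preceding exposition do essentially all of the work: under the measure $Q$ the sequence $\{l_i\}_{i \in \mathbb{Z}}$ is i.i.d., and by \eqref{Qltails} the distribution of $l_0$ has exponential tails. The latter guarantees that the moment generating function $\phi(\l) := E_Q[e^{\l l_0}]$ is finite for $\l$ in some open interval around $0$ and that $\phi$ is smooth there.

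The proof will just be an exponential Markov inequality. For $\l > 0$ small enough that $\phi(\l) < \infty$, using independence under $Q$,
\begin{align*}
Q\!\left( \sum_{i=0}^{a_n - 1} (l_i - E_Q[l_0]) > a_n \epsilon \right)
 &\leq e^{-\l a_n \epsilon}\, E_Q\!\left[ \exp\!\Bigl( \l \sum_{i=0}^{a_n-1} (l_i - E_Q[l_0]) \Bigr) \right] \\
 &= \exp\!\Bigl( a_n \bigl( -\l \epsilon + \log E_Q[ e^{\l(l_0 - E_Q[l_0])}] \bigr) \Bigr).
\end{align*}
Because $\psi(\l) := \log E_Q[ e^{\l(l_0 - E_Q[l_0])} ]$ satisfies $\psi(0) = \psi'(0) = 0$ and is smooth near $0$, we have $\psi(\l) = O(\l^2)$ as $\l \downarrow 0$. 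Hence for any $\epsilon > 0$ one can choose $\l = \l(\epsilon) > 0$ small enough so that $\psi(\l) \leq \l \epsilon / 2$, and then the bracketed expression in the exponent is at most $-\l\epsilon/2$. Setting $c(\epsilon) := \l(\epsilon) \epsilon / 2 > 0$ yields
\[
Q\!\left( \sum_{i=0}^{a_n-1} (l_i - E_Q[l_0]) > a_n \epsilon \right) \leq e^{-c(\epsilon) a_n},
\]
which is the claim.

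There is no real obstacle here; the only thing to verify carefully is that the smoothness of $\phi$ near zero (which follows from \eqref{Qltails} via dominated convergence) makes $\psi(\l)/\l \to 0$ as $\l \downarrow 0$, so that the choice of $\l(\epsilon)$ can indeed be made for every $\epsilon > 0$. Note that unlike Lemma~\ref{lem:beta0}, no truncation by $b_n$ is needed because $l_i$ itself has exponential tails — this is precisely why this lemma gives the much stronger rate $e^{-c(\epsilon) a_n}$ rather than the stretched bound $e^{-C(\log n)^2}$ appearing in the other two lemmas of the decomposition.
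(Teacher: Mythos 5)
Your proof is correct and is exactly the standard Cram\'er-type argument the paper invokes: the authors explicitly decline to write it out, stating only that ``the proof of Lemma \ref{lem:beta1} is a standard result in large deviation theory since the $l_i$ are i.i.d.\ with exponential tails.'' You have simply filled in those standard details (exponential Markov inequality plus the fact that $\psi(\l)=O(\l^2)$ near $0$), so there is nothing to compare beyond noting that you supplied what the paper omitted.
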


\begin{lem}\label{lem:beta2}
For any $\epsilon>0$, there exists $C,\,C'>0$ such that
\begin{equation}
Q\left(\sum_{i=0}^{a_n-1}\sum_{j=\nu_i}^{\nu_{i+1}-1} W_{\nu_i,j}\mathbf{I}_{\{M_i<b_n\}}-E_Q\left[\sum_{j=\nu_0}^{\nu_1-1} W_{\nu_0,j}\right]>a_n \epsilon\right)\leq C a_n e^{-C'(\log n)^2}.\label{lem:beta21}
\end{equation}
\end{lem}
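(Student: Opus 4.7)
The plan is to exploit the i.i.d.\ block structure under $Q$. Writing $Y_i := \sum_{j=\nu_i}^{\nu_{i+1}-1} W_{\nu_i,j}$, one checks that $Y_i$ depends only on the environment in the block $\mathfrak{B}_i = \{\omega_x : \nu_i \leq x < \nu_{i+1}\}$, and the same is true of $M_i$ and $l_i$. Hence under $Q$ the triples $\{(Y_i, M_i, l_i)\}_{i \geq 0}$ are i.i.d., and the event in question is a one-sided large deviation for a sum of $a_n$ i.i.d.\ random variables. Since $E_Q[Y_0 \mathbf{I}_{\{M_0 < b_n\}}] \leq E_Q[Y_0]$, the centering in the lemma is at least as large as the true mean of each summand, so it suffices to control the deviation from $E_Q[\tilde Z_0]$ for a suitably truncated version $\tilde Z_0$ of the summand.

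The first step is to obtain a deterministic upper bound on the summands. Since $\Pi_{k,j} \leq M_i$ whenever $\nu_i \leq k \leq j < \nu_{i+1}$, one has $W_{\nu_i,j} \leq l_i M_i$, so $Y_i \leq l_i^2 M_i$ and hence $Y_i \mathbf{I}_{\{M_i < b_n\}} \leq l_i^2 b_n$. I further truncate at $l_i \leq L_n := (\log n)^2$ by setting $\tilde Z_i := Y_i \mathbf{I}_{\{M_i < b_n\}} \mathbf{I}_{\{l_i \leq L_n\}}$, which is i.i.d.\ and uniformly bounded by $L_n^2 b_n = (\log n)^4 b_n$. The error from this extra truncation is absorbed using the exponential tail \eqref{Qltails}: a union bound gives
\[
Q\!\left( \exists\, i \in [0, a_n - 1] : l_i > L_n \right) \leq a_n\, c\, e^{-c'(\log n)^2},
\]
which is already of the required form.

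What remains is to apply Bernstein's inequality to the centered sum $\sum_{i=0}^{a_n-1}(\tilde Z_i - E_Q[\tilde Z_0])$. A crude bound $\operatorname{Var}(\tilde Z_0) \leq L_n^4\, E_Q[M_0^2 \mathbf{I}_{\{M_0 < b_n\}}]$ combined with the tail asymptotic \eqref{M} gives $E_Q[\tilde Z_0^2] \leq C (\log n)^8 b_n^{(2-s) \vee 0}$ (with an extra $\log b_n$ factor when $s=2$), and since $s > 1$ a direct comparison shows that the sup-norm bound $(\log n)^4 b_n$ of the $\tilde Z_i$ dominates the variance contribution in the Bernstein denominator. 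Bernstein then yields an estimate of the form $\exp\bigl(-c\,\epsilon\, n^{\eta_1 - \eta_2}/(\log n)^4\bigr)$, which since $\eta_1 > \eta_2$ is eventually much smaller than $e^{-(\log n)^2}$, and combined with the $l_i$-truncation error produces the claimed bound $C a_n e^{-C'(\log n)^2}$. The main subtlety is that $\eta_1 - \eta_2$ could be an arbitrarily small positive constant, so the concentration argument must be driven by the sup-norm of the summands rather than by their variance; this is precisely why the secondary truncation at $l_i \leq (\log n)^2$ is essential, since it places each $\tilde Z_i$ below $\mathrm{polylog}(n)\cdot b_n$ and puts the problem firmly in the Poisson regime of Bernstein's inequality.
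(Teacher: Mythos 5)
Your proposal is correct and follows essentially the same route as the paper: truncate both at $M_i < b_n$ and $l_i \leq (\log n)^2$ to get a deterministic bound $(\log n)^4 b_n$ on the summands, handle the $l_i$-truncation error by a union bound with the exponential tail \eqref{Qltails}, and then apply a Bernstein-type concentration bound in the Poisson regime. The only cosmetic difference is that you invoke Bernstein's inequality as a black box while the paper re-derives the same estimate from scratch via the elementary inequality \eqref{lem:intro2} and an exponential Chebyshev bound with $\lambda = ((\log n)^4 b_n)^{-1}$; both yield the exponent $-c\, a_n/((\log n)^4 b_n)$, which dominates $e^{-(\log n)^2}$ since $\eta_1 > \eta_2$.
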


\begin{lem}\label{lem:beta3}
For any $\epsilon>0$, there exists constants $C,\, C'>0$ such that
\begin{equation}
Q\left(\sum_{i=0}^{a_n-1}\left\{W_{\nu_{i-(c_n-1)},\nu_i-1}R_{\nu_i,\nu_{i+1}-1}\mathbf{I}_{\{M_i\leq b_n\}}-E_Q[W_{-1}R_{0,\nu_1-1}]\right\} >a_n \epsilon \right)\leq C a_n e^{-C'(\log n)^2}\label{beta:30}
\end{equation}
\end{lem}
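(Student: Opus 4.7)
The plan is to write $X_i := W_{\nu_{i-(c_n-1)},\nu_i-1}R_{\nu_i,\nu_{i+1}-1}\mathbf{I}_{\{M_i\leq b_n\}}$ and exploit two structural features of $\{X_i\}$ under $Q$. First, $X_i$ depends only on the environment in the $c_n$ consecutive blocks $\mathfrak{B}_{i-(c_n-1)},\ldots,\mathfrak{B}_i$, and since these blocks are i.i.d.\ under $Q$, the sequence is stationary and $c_n$-dependent: $X_i$ and $X_j$ are independent whenever $|i-j|\geq c_n$. Second, on $\{M_i\leq b_n\}$ one has $R_{\nu_i,\nu_{i+1}-1}\leq l_i M_i\leq l_i b_n$, and exponential tails of $l_i$ and of $W_{\nu_{i-(c_n-1)},\nu_i-1}\leq W_{\nu_i-1}\stackrel{d}{=}W_{-1}$ via \eqref{lwbound} permit truncating both at level $(\log n)^2$ to produce $\tilde X_i:=X_i\mathbf{I}_{A_i}$, with $A_i:=\{l_i\leq(\log n)^2,\ W_{\nu_{i-(c_n-1)},\nu_i-1}\leq(\log n)^2\}$, deterministically bounded by $B:=(\log n)^4 b_n$, while $Q(\exists\,i<a_n:X_i\neq\tilde X_i)\leq C a_n e^{-c(\log n)^2}$.

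On the event $\bigcap_i A_i$ one has $\sum_i X_i=\sum_i\tilde X_i$. Since $\tilde X_0\leq W_{-1}R_{0,\nu_1-1}$ pointwise, $E_Q[\tilde X_0]\leq E_Q[W_{-1}R_{0,\nu_1-1}]$, and so, after absorbing the truncation failure into the right-hand side of \eqref{beta:30}, the event in question is contained in
\begin{equation*}
\Bigl\{\sum_{i=0}^{a_n-1}\bigl(\tilde X_i - E_Q[\tilde X_0]\bigr)>\epsilon a_n\Bigr\}.
\end{equation*}
To handle this I would partition $\{0,\ldots,a_n-1\}$ into the residue classes $I_k:=\{i<a_n:i\equiv k\pmod{c_n}\}$, $0\leq k<c_n$. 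Within each $I_k$ the indices are at least $c_n$ apart, so by the $c_n$-dependence $(\tilde X_i)_{i\in I_k}$ are i.i.d.\ under $Q$, each bounded by $B$, with variance $\sigma^2\leq B\cdot E_Q[\tilde X_0]=O(B)$. (Finiteness of $E_Q[\tilde X_0]$ follows from the independence under $Q$ of the environment on either side of $0$, the decomposition $\beta_0 = l_0+2\sum_{j=0}^{\nu_1-1}W_{0,j}+2W_{-1}R_{0,\nu_1-1}$, and $E_Q[\beta_0]<\infty$ from \eqref{Qbtails}.)

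Applying Bernstein's inequality to each $I_k$ with threshold $\epsilon a_n/c_n$ then yields
\begin{equation*}
 Q\Bigl(\sum_{i\in I_k}(\tilde X_i - E_Q[\tilde X_0])>\tfrac{\epsilon a_n}{c_n}\Bigr)\leq\exp\!\left(-\frac{c\epsilon^2 a_n}{c_n B}\right)=\exp\!\left(-\frac{c\epsilon^2 n^{\eta_1-\eta_2}}{(\log n)^6}\right),
\end{equation*}
and a union bound over $k=0,\ldots,c_n-1$ produces a total of $c_n\exp(-c\epsilon^2 n^{\eta_1-\eta_2}/(\log n)^6)$. Since $\eta_1>\eta_2$, this decays much faster than $a_n e^{-C'(\log n)^2}$, completing the proof.

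The main obstacle is calibrating the truncation scale $(\log n)^2$ so that the uniform bound $B=(\log n)^4 b_n$ is small enough, relative to $a_n=n^{\eta_1}$, for the Bernstein exponent $a_n/(c_n B)\sim n^{\eta_1-\eta_2}/(\log n)^6$ to dominate $(\log n)^2$, while simultaneously keeping the probability of truncation failure below $Ca_n e^{-c(\log n)^2}$. The polynomial buffer $\eta_1-\eta_2>0$ between $a_n$ and $b_n$ is exactly what makes the scheme close.
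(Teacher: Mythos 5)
Your proof is correct and reaches the right rate, but it replaces the paper's concentration step with a different tool. Both proofs share the same skeleton: define $\psi(i,n)$ (your $\tilde X_i$) by truncating $R_{\nu_i,\nu_{i+1}-1}\mathbf{I}_{\{M_i\leq b_n\}}$ via $l_i\leq(\log n)^2$ and $W_{\nu_{i-(c_n-1)},\nu_i-1}\leq(\log n)^2$ to obtain the deterministic bound $(\log n)^4 b_n$; control the truncation error via the exponential tails \eqref{lwbound}; split $\{0,\ldots,a_n-1\}$ into $c_n$ residue classes mod $c_n$ so that the terms within a class are i.i.d.\ (by $c_n$-dependence); and union bound over the classes. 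Where you diverge is in step 3: the paper computes the exponential moment $E_Q[e^{\lambda\psi(0,n)}]$ by hand via the Fuk--Nagaev inequality \eqref{lem:intro2}, inserting the sharper second-moment asymptotics for $E_Q[M_0^2\mathbf{I}_{M_0\leq b_n}]$ from Lemma \ref{lem:EM2} (with the three cases $s<2$, $s=2$, $s>2$), and then optimizes $\lambda=1/((\log n)^4 b_n)$. You instead invoke Bernstein's inequality as a black box with the crude variance bound $\sigma^2\leq B\,E_Q[\tilde X_0]=O((\log n)^4 b_n)$, sidestepping Lemma \ref{lem:EM2} and the case analysis entirely. Both deliver the exponent $\Theta(a_n/(c_n(\log n)^4 b_n))=\Theta(n^{\eta_1-\eta_2}/(\log n)^6)\gg(\log n)^2$, so either suffices. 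Your route is shorter and arguably more transparent; the paper's route would matter only if one needed to exploit the tighter second moment (e.g.\ to shave logarithmic factors, which is unnecessary here given the polynomial buffer $\eta_1>\eta_2$). Two small technical points worth making explicit if you formalize this: in Bernstein you need $|\tilde X_i - E_Q[\tilde X_0]|\leq B$, which holds for $n$ large since $E_Q[\tilde X_0]\leq E_Q[W_{-1}R_{0,\nu_1-1}]$ is an $n$-independent constant while $B\to\infty$; and the finiteness of that constant is exactly as you argue, from the decomposition \eqref{dec1} of $\beta_0$ together with $E_Q[\beta_0]<\infty$, which follows from the tail asymptotics \eqref{Qbtails} and $s>1$.
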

The proof of Lemma \ref{lem:beta1} is a standard result in large deviation theory since the $l_i$ are i.i.d.\ with exponential tails. We will therefore only give the proofs of Lemmas \ref{lem:beta2} and \ref{lem:beta3}.
Although the summands in \eqref{lem:beta21} are i.i.d, we cannot use
the standard large deviation techniques involving exponential moments to obtain a bound like in Lemma \ref{lem:beta1}
because the exponential moment is infinite as $\sum_{j=\nu_i}^{\nu_{i+1}-1} W_{\nu_i,j}>M_i$ and $Q(M_i>x)\sim C''/x^s$ for $1< s$.  Instead, we adapt a technique of Nagaev and Fuk used on estimating for large deviation probability of sums of heavy tailed independent random variables \cite{FN71}.
Let $X$ be a random variable on arbitrary probability space $\Omega$ and let $A$ be a measurable subset of $\Omega$.  If $X\leq y$, we claim that for any $h>0$,
\begin{equation}
E\left[e^{hX}-1-hX\right]\leq \frac{e^{hy}-1-hy}{y^2}E\left[X^2\right].\label{lem:intro2}
\end{equation}
It is easy to verify \eqref{lem:intro2} by the fact that $(e^{hx}-1-hx)/x^2$ is non-decreasing in $x$.
Secondly, we state the following lemma which follows easily from the tail asymptotics \eqref{M} for $M_i$ under the measure $Q$.
\begin{lem}\label{lem:EM2} Let Assumptions \ref{asm:s} and \ref{asm:s2} hold.
 \begin{enumerate}
  \item If $s<2$, then $E_Q[M_0^2 \mathbf{I}_{M_0 \leq x} ] \sim \frac{C_0 s}{2-s} x^{2-s}$ as $x\ra\infty$.
  \item If $s=2$, then  $E_Q[M_0^2 \mathbf{I}_{M_0 \leq x} ] \sim 2 C_0 \log x$ as $x\ra\infty$.
  \item If $s>2$, then $E_Q[M_0^2] < \infty$.
 \end{enumerate}
\end{lem}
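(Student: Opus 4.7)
The proof is a direct application of the layer-cake identity together with the tail asymptotic \eqref{M}. The plan is to write
\begin{equation*}
E_Q[M_0^2 \mathbf{I}_{M_0\leq x}] = \int_0^x 2u\,Q(M_0>u)\,du - x^2 Q(M_0>x),
\end{equation*}
which follows from the general formula $E[Y\mathbf{I}_{Y\leq c}]=\int_0^c P(Y>t)\,dt - cP(Y>c)$ applied to $Y=M_0^2$ and $c=x^2$ after the substitution $t=u^2$. By \eqref{M}, $Q(M_0>u) = C_0 u^{-s}(1+o(1))$ as $u\to\infty$, and each of the three cases reduces to an asymptotic evaluation of this integral.

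For part (1), with $s<2$, I would split the integral at some fixed $K$ large enough that the asymptotic has settled in. The contribution from $[0,K]$ is $O(1)$, while
\begin{equation*}
\int_K^x 2u\cdot C_0 u^{-s}(1+o(1))\,du \sim \frac{2C_0}{2-s}\,x^{2-s},
\end{equation*}
and the boundary term satisfies $x^2 Q(M_0>x)\sim C_0 x^{2-s}$. Subtracting gives
$E_Q[M_0^2 \mathbf{I}_{M_0\leq x}] \sim \left(\tfrac{2C_0}{2-s}-C_0\right)x^{2-s} = \tfrac{C_0 s}{2-s}x^{2-s}$, as claimed. For part (2), with $s=2$, the same tail integral becomes $\sim 2C_0\log x$ while the boundary term $x^2 Q(M_0>x)\sim C_0 = O(1)$, so the logarithmic term dominates. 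For part (3), with $s>2$, apply layer cake directly: $E_Q[M_0^2] = \int_0^\infty 2u\,Q(M_0>u)\,du$; the integrand decays like $u^{1-s}$ at infinity, and $s-1>1$ ensures integrability.

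No substantive obstacle arises. The only technical care is to ensure that the $o(1)$ factor in \eqref{M} does not disturb the leading order in parts (1) and (2). This is handled by the standard device of fixing $\eta>0$, choosing $K=K(\eta)$ so that $|Q(M_0>u)/(C_0 u^{-s}) - 1| < \eta$ for $u\geq K$, obtaining matching upper and lower bounds in $x$, and then letting $\eta\to 0$ after $x\to\infty$.
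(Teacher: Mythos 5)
Your proof is correct. The paper itself does not give a proof of this lemma, simply asserting that it ``follows easily from the tail asymptotics \eqref{M}''; in an earlier (commented-out) draft the authors intended to cite it as a corollary of Karamata's theorem. Your layer-cake argument, with the truncation identity $E[Y\mathbf{I}_{Y\leq c}]=\int_0^c P(Y>t)\,dt - cP(Y>c)$ applied to $Y=M_0^2$ and the substitution $t=u^2$, is precisely the self-contained version of that Karamata computation, and the cancellation $\tfrac{2C_0}{2-s}-C_0=\tfrac{C_0 s}{2-s}$ correctly produces the stated constant. The handling of the $o(1)$ via the $\eta$--$K$ device is also the right level of care. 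In short, you have supplied the routine proof the paper chose to omit.
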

Now we are ready to give the proof of Lemma \ref{lem:beta2}.
\begin{proof}[Proof of Lemma \ref{lem:beta2}]
For the simplicity, let us first introduce a notation.
$$W := E_Q\left[\sum_{j=\nu_0}^{\nu_1-1} W_{\nu_0,j} \right].$$
Also, define a positive function $\zeta(i,n)$ such that
$$\zeta(i,n):=\sum_{j=\nu_i}^{\nu_{i+1}-1} W_{\nu_i,j}\mathbf{I}_{\{M_i<b_n\}\cap \{l_i<(\log n)^2\}}.$$
Since $\Pi_{i_1,i_2}\leq M_i\text{ for any } i_1,i_2\text{ such that } \nu_i\leq i_1\leq i_2\leq \nu_{i+1}-1,$
\[
\sum_{j=\nu_i}^{\nu_{i+1}-1} W_{\nu_i,j}=\sum_{j=\nu_i}^{\nu_{i+1}-1}\sum_{k=\nu_i}^j\Pi_{k,j}\leq \sum_{j=\nu_i}^{\nu_{i+1}-1}\sum_{k=\nu_i}^j M_i\leq \sum_{j=\nu_i}^{\nu_{i+1}-1} l_i M_i\leq l_i^2 M_i.
\]
As a result, we have a following bound of $\zeta(i,n)$.
\begin{equation}
\zeta(i,n)\leq (\log n)^4 M_i\mathbf{I}_{\{M_i<b_n\}}\leq (\log n)^4b_n.\label{func:zeta}
\end{equation}
Replacing the notations in the problem by $\zeta(i,n)$ and $W$ the notation above, the problem is simplified to
\begin{align}
&Q\left(\sum_{i=0}^{a_n-1}\left(\sum_{j=\nu_i}^{\nu_{i+1}-1} W_{\nu_i,j}\mathbf{I}_{\{M_i<b_n\}}-W\right)>a_n \epsilon\right)\nonumber\\
&\qquad\leq Q\left(\exists i\in[0,a_n-1]:l_i>(\log n)^2\right) + Q\left(\sum_{i=0}^{a_n-1}\zeta(i,n) > a_n(\epsilon+W)\right). \label{lem:beta20}
\end{align}
By \eqref{lwbound} and the stationarity of $l_i$ under Q, the first term is bounded by $c a_n e^{-c'(\log n)^2}$ for some $c,\,c'>0.$  So, it remains to prove a similar upper bound for the second term of \eqref{lem:beta20}.  Recall that $\zeta(i,n)$ is i.i.d. sequences in $i$ under the measure Q.  Then by Chebyshev Inequality, for any $\l\geq 0$
\begin{equation}
Q\left(\sum_{i=0}^{a_n-1}\zeta(i,n)>a_n(\epsilon+W)\right)\leq E_Q[e^{\lambda\sum_{i=0}^{\lceil a_n\rceil-1}\zeta(i,n)}]e^{-\lambda a_n(W + \epsilon)}=e^{-\lambda a_n (\epsilon+W)} E_Q[e^{\lambda\zeta(0,n)}]^{\lceil a_n\rceil}.\label{lem:beta22}
\end{equation}
Note that $E_Q[\zeta(0,n)]\leq W$, and $\zeta(0,n)\leq (\log n)^4b_n$.  Then using \eqref{lem:intro2} and \eqref{func:zeta},
\begin{align}
E_Q\left[e^{\lambda\zeta(0,n)}\right]&= 1+\lambda E_Q\left[\zeta(0,n)\right]+E_Q\left[e^{\lambda\zeta(0,n)}-1-\lambda\zeta(0,n)\right]\nonumber\\
&\leq 1+\l W+\frac{e^{\lambda (\log n)^4b_n}-1-\lambda (\log n)^4b_n}{((\log
n)^4b_n)^2}E\left[\zeta(0,n)^2\right]\nonumber\\
&\leq 1+\l W+\frac{e^{\lambda (\log n)^4b_n}-1-\lambda (\log n)^4b_n}{b_n^2}E\left[M_0^2 \mathbf{I}_{\{M_0<b_n\}}\right]\nonumber
\end{align}
With a choice of $\lambda=\frac{1}{(\log n)^4b_n}$, we get
$$E_Q\left[e^{\lambda\zeta(0,n)}\right]\leq 1+\frac{W}{(\log n)^4b_n}+\frac{2}{b_n^2}E\left[M_0^2 \mathbf{I}_{\{M_0<b_n\}}\right]\leq \exp\left(\frac{W}{(\log n)^4b_n}+\frac{2}{b_n^2}E\left[M_0^2 \mathbf{I}_{\{M_0<b_n\}}\right]\right).$$
Applying Lemma \ref{lem:EM2}, we obtain that
there exists some constant $C>0$ and
\begin{equation}\label{cases:zeta}
E_Q\left[e^{\lambda\zeta(0,n)}\right]\leq
\begin{cases}
\exp\left(\frac{W}{(\log n)^4b_n}+\frac{C}{b_n^s}\right)&\text{ if }1<s<2\\
\exp\left(\frac{W}{(\log n)^4b_n}+\frac{C\log n}{b_n^2}\right)&\text{ if }s=2\\
\exp\left(\frac{W}{(\log n)^4b_n}+\frac{C}{b_n^2}\right)&\text{ if }2<s,
\end{cases}
\qquad \text{when } \l=\frac{1}{(\log n)^4b_n}.
\end{equation}
Combining \eqref{lem:beta22} and \eqref{cases:zeta} we get that there exists a constant $c>0$ such that
\[
Q\left(\sum_{i=0}^{a_n-1}\zeta(i,n)>a_n(\epsilon+W)\right)\leq
\begin{cases}
c\times\exp\left(a_n\left(\frac{C}{b_n^s}-\frac{\epsilon}{(\log n)^4b_n}\right)\right)&\text{ if }1<s<2\\
c\times\exp\left(a_n\left(\frac{C\log n}{b_n^2}-\frac{\epsilon}{(\log n)^4b_n}\right)\right)&\text{ if }s=2\\
c\times\exp\left(a_n\left(\frac{C}{b_n^2}-\frac{\epsilon}{(\log n)^4b_n}\right)\right)&\text{ if }2<s.
\end{cases}
\]
Note that all three cases are bounded above by $c e^{-c' \frac{a_n}{(\log n)^4 b_n}}$ for some $c,\, c'>0$ for $n$ large enough.  Hence, the second term of \eqref{lem:beta20} is bounded above by the right-hand side of \eqref{lem:beta21} for large $n$.
\end{proof}

In preparation for the proof of Lemma \ref{lem:beta3}, we introduce the following notation.
$$\tilde{W}_{i,n}:=W_{\nu_{i-(c_n-1),\nu_i-1}}\text{ and }\tilde{R}_i:=R_{\nu_i,\nu_{i+1}-1},$$
and define
\[\psi(i,n) := \tilde{R}_i\mathbf{I}_{\{M_i\leq b_n, \, l_i\leq(\log n)^2\}}\tilde{W}_{i,n}\mathbf{I}_{\{\tilde{W}_{i,n}<(\log n)^2\}}.\]
 Note that $\psi(i+c_n,n)$ is independent of $\psi(i,n)$ under the measure $Q$. Also, since $\tilde{R}_i=\sum_{k=\nu_i}^{\nu_{i+1}-1}\Pi_{\nu_i,k}\leq l_iM_i$,
\begin{equation}\label{psiub}
\psi(i,n)\leq (\log n)^4M_i\mathbf{I}_{\{M_i<b_n\}}\leq (\log n)^4 b_n.
\end{equation}
Finally, we give the proof of Lemma \ref{lem:beta3}.
\begin{proof}[Proof of Lemma \ref{lem:beta3}]
For the simplification to notation, denote $W' := E_Q[W_{-1}]$ and $R:=E[R_{0,\nu_1-1}]$. Note that $R_{0,\nu_1-1}$ and $W_{-1}$ are independent because $R_{0,\nu_1-1} \in\{\omega_x: 0\leq x\leq \nu_1-1 \}$ while $W_{-1}\in\{\omega_x:x\leq -1\}$, so we get
$$E_Q[W_{-1}R_{0,\nu_1-1}] = E_Q[W_{-1}] E_Q[R_{0,\nu_1-1}] = W'R.$$
With new notations described above, the problem is simplified to
\begin{align}
&Q\left(\sum_{i=0}^{a_n-1}\left\{\tilde{W}_{i,n}\tilde{R_i}-W'R\right\} >a_n\epsilon\right)\leq Q\left(\exists i\in[0,a_n-1]:l_i>(\log n)^2\right)\nonumber\\
&\qquad + Q\left(\exists i\in[0,a_n-1]:W_{\nu_i-1}>(\log n)^2\right) + Q\left(\sum_{i=0}^{a_n-1}(\psi(i,n)-W'R)>a_n\epsilon\right).\label{beta:31}
\end{align}
By \eqref{lwbound}, the first and second terms of \eqref{beta:31} are bounded by $2c a_n e^{-c'(\log n)^2}$ for some constant $c,\,c'>0$.  Hence, it is enough to show that there exist some constants $C, C'>0,$ such that
\begin{equation}\label{func:psi}
Q\left(\sum_{i=0}^{a_n-1}\psi(i,n)>a_n(\epsilon+W'R)\right)\leq C a_n e^{-C'(\log n)^2}
\end{equation}
A proof of \eqref{func:psi} begins with grouping $\{\psi(i,n)\}_{\{0\leq i\leq a_n-1\}}$ into $c_n = \fl{(\log n)^2}$ smaller sums as follows.
In particular, since
\[
\sum_{i=0}^{a_n-1}\psi(i,n)
\leq \sum_{j=0}^{c_n-1}\left(\sum_{i=0}^{\lfloor a_n/c_n\rfloor}\psi(j+i\, c_n,n)\right),
\]
then the third term of \eqref{beta:31} is bounded above by
\begin{align}
Q\left(\sum_{j=0}^{c_n-1}\left(\sum_{i=0}^{\lfloor a_n/c_n\rfloor}\psi(j+i\, c_n,n)\right)>a_n(\epsilon+W'R)\right)&\leq \sum_{j=0}^{c_n-1} Q\left(\sum_{i=0}^{\lfloor a_n/c_n\rfloor}\psi(j+i\, c_n,n) >\frac{a_n}{c_n}(\epsilon+W'R)\right)\nonumber\\
&= c_n
Q\left(\sum_{i=0}^{\lfloor a_n/c_n\rfloor}\psi(i\, c_n,n) >\frac{a_n}{c_n}(\epsilon+W'R)\right),\label{psi:indep}
\end{align}
where the last equality follows by the stationarity of $\psi(i,n)$ under $Q$.   Notice terms in the sum inside the probability in \eqref{psi:indep} are i.i.d. under $Q$.  Hence, applying Chebyshev Inequality to \eqref{psi:indep}, for any $\l>0$
\begin{align}
Q\left(\sum_{i=0}^{\lfloor a_n/c_n\rfloor}\psi(i\, c_n,n) >\frac{a_n}{c_n}(\epsilon+W'R)\right)&\leq E_Q\left[e^{\sum_{i=0}^{\lfloor a_n/c_n\rfloor}\lambda \psi(i\, c_n,n)}\right]e^{-\frac{\lambda a_n}{c_n}\left(\epsilon+W'R\right)}\nonumber\\
&= E_Q\left[e^{\lambda \psi(0,n)}\right]^{\lfloor a_n/c_n\rfloor+1}e^{-\frac{\lambda a_n}{c_n}\left(\epsilon+W'R\right)}. \label{psisums}
\end{align}
Note that $E[\psi(0,n)]\leq W'R$ and recall that $\psi(0,n)\leq (\log n)^4b_n$.
Therefore, using \eqref{lem:intro2} 
\begin{align}
E_Q\left[e^{\lambda\psi(0,n)}\right]&= 1+\lambda E_Q\left[\psi(0,n)\right]+E_Q\left[e^{\lambda\psi(0,n)}-1-\lambda\psi(0,n)\right]\nonumber\\
&\leq 1+\l W'R+\frac{e^{\lambda (\log n)^4b_n}-1-\lambda (\log n)^4b_n}{((\log
n)^4b_n)^2}E\left[\psi(0,n)^2\right]\nonumber\\
&\leq 1+\l W'R+\frac{e^{\lambda (\log n)^4b_n}-1-\lambda (\log n)^4b_n}{b_n^2}E\left[M_{0}^2 \mathbf{I}_{\{M_{0}<b_n\}}\right],\nonumber
\end{align}
where in the last line we used the first inequality in \eqref{psiub}.
With a choice of $\lambda=\frac{1}{(\log n)^4b_n},$ we get
\begin{align*}
E_Q\left[e^{\lambda\psi(0,n)}\right]&\leq 1+\frac{W'R}{(\log n)^4b_n}+\frac{2}{b_n^2}E\left[M_{0}^2 \mathbf{I}_{\{M_{0}<b_n\}}\right]\\
&\leq \exp\left(\frac{W'R}{(\log n)^4b_n}+\frac{2}{b_n^2}E\left[M_{0}^2 \mathbf{I}_{\{M_{0}<b_n\}}\right]\right),
\end{align*}
and thus, applying Lemma \ref{lem:EM2}, there exists a constant $C>0$ such that
\begin{equation}\label{cases:psi}
E_Q\left[e^{\lambda\psi(0,n)}\right]\leq
\begin{cases}
\exp\left(\frac{W'R}{(\log n)^4b_n}+\frac{C}{b_n^s}\right)&\text{ if }1<s<2\\
\exp\left(\frac{W'R}{(\log n)^4b_n}+\frac{C\log n}{b_n^2}\right)&\text{ if }s=2\\
\exp\left(\frac{W'R}{(\log n)^4b_n}+\frac{C}{b_n^2}\right)&\text{ if }2<s,
\end{cases}
\qquad \text{when } \l=\frac{1}{(\log n)^4b_n}.
\end{equation}
Combining \eqref{psi:indep}, \eqref{psisums} and \eqref{cases:psi}, there exist constants $c,C>0$ such that, for large $n$,
\[Q\left(\sum_{i=0}^{a_n-1}(\psi(i,n)-W'R)>a_n\epsilon\right)\leq
\begin{cases}
c\times c_n\times\exp\left(\frac{a_n}{c_n}\left(\frac{C}{b_n^s}-\frac{\epsilon}{( \log n)^4b_n}\right)\right)&\text{ if }1<s<2\\
c\times c_n\times\exp\left(\frac{a_n}{c_n}\left(\frac{C\log n}{b_n^2}-\frac{\epsilon}{(\log n)^4b_n}\right)\right)&\text{ if }s=2\\
c\times c_n\times\exp\left(\frac{a_n}{c_n}\left(\frac{C}{b_n^2}-\frac{\epsilon}{( \log n)^4b_n}\right)\right)&\text{ if }2<s.
\end{cases}
\]
Note that all three cases are bounded above by $c\times c_n e^{-c' \frac{a_n}{c_n(\log n)^4 b_n}}<Ca_ne^{-C'(\log n)^2}$ for some $c,\, c',\,C,\, C' >0$ with $n$ large enough, which completes the proof of \eqref{func:psi}.
\end{proof}

\section{The Quenched Subexponential Tail of Hitting Time Large Deviations}\label{sec:qst}
The main goal of this section is to prove the following.
\begin{prop}\label{prop2}
Under the same assumptions as Theorem \ref{thm0}, for any $u\in(\frac{1}{ v_\alpha},\infty),$
\begin{equation}
\liminf_{n\to\infty}\frac{1}{n^{1-1/s}}\log P_\omega(T_{\nu_n}>u\nu_n)=-\infty, \qquad \alpha\text{-a.s.}\label{prop21}
\end{equation}
\end{prop}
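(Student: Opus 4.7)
The plan is to establish Proposition \ref{prop2} first $Q$-a.s.\ and then transfer the statement to $\alpha$-a.s.\ using the relation $Q(\cdot)=\alpha(\cdot\mid\nu_0=0)$ combined with a shift argument (any $\alpha$-typical environment shifted by the $\alpha$-a.s.\ finite quantity $\nu_0(\omega)$ lands in the $Q$-typical class, and the scaling $n^{1-1/s}$ is insensitive to this $O(1)$ shift). Fix $u > 1/v_\alpha$ and $K>0$; the goal is to show that $\log P_\omega(T_{\nu_n}>u\nu_n) \leq -K n^{1-1/s}$ for infinitely many $n$, $Q$-a.s. First, introduce a reflection at $\nu_{-K_n}$ with $K_n:=\lfloor(\log n)^2\rfloor$ using the splitting
\[
P_\omega(T_{\nu_n}>u\nu_n) \leq P_{\omega(\nu_{-K_n})}(T_{\nu_n}>u\nu_n) + P_\omega(T_{\nu_{-K_n}}<T_{\nu_n}),
\]
whose second term is super-polynomially small by a standard ladder-backtrack estimate. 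Setting $\omega':=\omega(\nu_{-K_n})$, $k_n:=\lfloor n^{1/s}/(K\log n)\rfloor$, and $m_n:=\lceil n/k_n\rceil$, the strong Markov property yields the factorization $E_{\omega'}[e^{\lambda T_{\nu_n}}] = \prod_{j=0}^{m_n-1} E_{\omega'}^{\nu_{jk_n}}[e^{\lambda T_{\nu_{(j+1)k_n}}}]$.

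For each block factor I would insert an auxiliary reflection at $\nu_{(j-1)k_n}$ (one block to the left). Splitting on whether the walk started at $\nu_{jk_n}$ visits $\nu_{(j-1)k_n}$ before reaching $\nu_{(j+1)k_n}$, the non-visiting contribution equals the analogous MGF in the doubly-reflected environment $\omega'(\nu_{(j-1)k_n})$ and is bounded by Corollary \ref{cor-1} by $\exp\bigl(\sinh(\lambda)A_j/(e^{-\lambda}-\sinh(\lambda)B_j)\bigr)$, where $A_j\approx \sum_{i=jk_n}^{(j+1)k_n-1}\beta_i$ and $B_j\approx \sum_{i=(j-1)k_n}^{(j+1)k_n-1}\beta_i$ by \eqref{beta}; the visiting contribution is controlled by the fact that $P_{\omega'}^{\nu_{jk_n}}(T_{\nu_{(j-1)k_n}}<\infty)$ is exponentially small in $k_n$ by ladder structure. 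Next, define the good event
\[
G_n := \Bigl\{\max_{-K_n\leq i\leq n-1}M_i \leq \epsilon n^{1/s}\Bigr\} \cap \Bigl\{\textstyle\sum_{i\in I}\beta_i \leq (1+\epsilon)|I|\mu \text{ for every } I \subset [-K_n, n-1] \text{ with } |I|\leq 2k_n\Bigr\},
\]
with $\mu := E_Q[\beta_0]$. The tail asymptotic \eqref{M} gives $Q(\max_{-K_n\leq i\leq n-1}M_i \leq \epsilon n^{1/s}) \geq \exp(-C_0\epsilon^{-s}(1+o(1)))$ uniformly in $n$, and Proposition \ref{prop1} combined with a union bound over the $O(n/k_n)$ windows of size $2k_n$ makes the second event hold with probability tending to one; hence $Q(G_n)$ is uniformly bounded below by a positive constant.

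On $G_n$ one has $B_j\leq 2(1+\epsilon)k_n\mu$ uniformly in $j$. Choosing $\delta\in(0,1-1/(uv_\alpha))$ and $\lambda_n := \delta/(2(1+\epsilon)k_n\mu)$, every per-block bound is valid, and multiplication followed by Markov's inequality together with $\mu/E_Q[l_0]=1/v_\alpha$ produces
\[
\log P_{\omega'}(T_{\nu_n}>u\nu_n) \leq \lambda_n n\mu \left(\frac{1+o(1)}{1-\delta}-uv_\alpha\right) \leq -c\,\frac{n}{k_n} \leq -cK\,n^{1-1/s}\log n
\]
for some $c = c(u,\delta,v_\alpha) > 0$. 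Since $Q(G_n)$ is bounded below by a constant and $\{G_n\text{ i.o.}\}$ is a tail event of the i.i.d.\ environment, Kolmogorov's $0$--$1$ law gives $Q(G_n\text{ i.o.}) = 1$; intersecting over positive integers $K$ proves $\liminf_n n^{-(1-1/s)}\log P_\omega(T_{\nu_n}>u\nu_n) = -\infty$ $Q$-a.s.

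The principal obstacle is the per-block analysis: one must rigorously show that the backtracking error incurred when replacing $\omega'$ with its doubly reflected version at each block is exponentially small in $k_n$ uniformly in $j$, and that the aggregate error summed over $m_n \sim n^{1-1/s}\log n$ blocks remains super-polynomially smaller than the main estimate. A secondary point is the careful verification of the $Q$-to-$\alpha$ transfer via the shift structure of the ladder decomposition, which relies on $\nu_0$ being $\alpha$-a.s.\ finite.
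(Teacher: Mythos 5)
The high‑level strategy (transfer $\alpha \to Q$ via the shift, then a blockwise MGF bound via Corollary \ref{cor-1} with an added reflection) is essentially the paper's, but your argument for producing the infinitely‑often good subsequence has a fatal flaw, and this is precisely where the paper's hard work is concentrated.

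You assert that $\{G_n \text{ i.o.}\}$ is a tail event of the i.i.d.\ environment and invoke Kolmogorov's $0$--$1$ law. It is not a tail event: $G_n$ constrains $\max_{-K_n\le i\le n-1}M_i$, so for every $n$ the event $G_n$ depends on $M_0$ (and on the whole bounded neighborhood of the origin). Changing the environment near $0$ changes every $G_n$, hence $\{G_n\text{ i.o.}\}$ is not measurable with respect to the tail $\sigma$-field, and the $0$--$1$ law gives you nothing. Since $Q(G_n)$ is only bounded below by a positive constant (not tending to $1$), having $Q(G_n\text{ i.o.})=1$ genuinely requires an argument; a nontrivial tail $\sigma$-field is exactly the danger here. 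The paper circumvents this in three coordinated moves that your proposal omits: it restricts to the super‑exponential subsequence $n_k = m^{m^k}$, so that the relevant events can be made to depend on disjoint pieces of environment (because $n_{k-1} < a_k$ for large $k$ and the reflection at $\nu(-1,k)$ decouples the event from the environment on $[\nu_{-n_{k-1}},\nu_{n_{k-1}}]$); it excises the central blocks $j\in\{-1,0,1\}$ via Lemma \ref{lm33} to obtain genuine independence; and then it applies the \emph{second} Borel--Cantelli lemma, with the uniform lower bound on the good‑event probabilities supplied by the point‑process limit $\{\beta_i/(2n)^{1/s}\}\Rightarrow$ Poisson from \cite{PS13} (Corollary \ref{lm36}). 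None of these can be replaced by a tail‑triviality appeal.

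A secondary issue: in defining $G_n$ you want $\sum_{i\in I}\beta_i\le(1+\epsilon)|I|\mu$ for all windows $I$ of size $\approx k_n\sim n^{1/s}/\log n$ while simultaneously imposing $\max M_i\le \epsilon n^{1/s}$. Proposition \ref{prop1} controls the \emph{truncated} sum $\sum\beta_i\mathbf{I}_{\{M_i\le b_n\}}$ and requires the truncation level $b_n = n^{\eta_2}$ to be strictly smaller in exponent than the window size $a_n = n^{\eta_1}$. With your choices both exponents equal $1/s$, so Proposition \ref{prop1} does not directly apply; this is why the paper splits into small hills at level $n^{(1-\epsilon)/s}$ (Lemma \ref{lm35}) and treats the at‑most‑one big hill per block (Lemmas \ref{lm33}--\ref{lm34}) through the point‑process bound (Corollary \ref{lm36}) rather than putting everything into a single good event.
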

\noindent
Before giving the proof of Proposition \ref{prop2}, we will first show how it can be used to complete the proof of Theorem \ref{thm0}.
\begin{proof}[\textit{Proof of Theorem }\ref{thm0}]
Let $v<v'<v_\alpha$, then
\begin{equation}
P_\omega(X_n<nv)\leq P_\omega(T_{nv'}>n)+P_\omega^{nv'}(T_{nv}<\infty). \label{tm0}
\end{equation}
First, we will show that Proposition \ref{prop2} implies that
\begin{equation}\label{Tnvsubseq}
 \liminf_{n\ra\infty} \frac{1}{n^{1-1/s}}\log P_\omega(T_{nv'}>n)=-\infty, \quad \a\text{-a.s.}
\end{equation}
To this end, let $\mu$ and $\mu'$ be such that $v' < \mu < \mu' < v_\alpha$ and let $c_1 =  \frac{\mu}{E_Q[\nu_1]}$.
Since $\lim_{n\to\infty}\frac{\nu_n}{n}=E_Q[\nu_1]$, $\a$-a.s., it follows that
\[
 \lim_{n\to\infty}\frac{v_{\fl{c_1n}}}{n} = c_1 E_Q[\nu_1]  =\mu.
\]
That is, $ \frac{\nu_{\fl{c_1 n}}}{n} \in (v', \mu')$ for all $n$ sufficiently large (depending on $\w$).
Thus, for $\a$-a.e.\ environment and all $n$ large enough we have that
\[
 P_{\omega}(T_{nv'}>n)\leq P_{\omega}(T_{\nu_{\fl{c_1n}}}>n)=P_{\omega}\left(T_{\nu_{\fl{c_1n}}}>\frac{n}{\nu_{\fl{c_1n}}}\nu_{\fl{c_1n}}\right)\leq P_{\omega}\left(T_{\nu_{\fl{c_1n}}}>\frac{1}{\mu'}\nu_{\fl{c_1n}}\right),
\]
and since $1/\mu' > 1/v_\alpha$ it follows from Proposition \ref{prop2} that \eqref{Tnvsubseq} holds.
Regarding the second term on the right of \eqref{tm0}, it was shown in  \cite[Lemma 3.3]{GS02} that there is some constant $C>0$ such that $\P[T_m<\infty]\leq\exp(\textit{C}m)$ for any $m<0$. Therefore, we have a following upper bound with a choice of small $\epsilon>0$ such that
\begin{align*}
\P(P_\omega^{nv'}(T_{nv}<\infty)\geq e^{-\epsilon n})&\leq e^{\epsilon n}\P^{nv'}(T_{nv}<\infty)\\
&=e^{\epsilon n}\P(T_{n(v-v')}<\infty)
\leq e^{\epsilon n}e^{\textit{C}n(v-v')}.
\end{align*}
Since $v<v'$, if $\epsilon>0$ is chosen sufficiently small then the upper bound given above is exponentially decreasing in $n$ and so the Borel-Cantelli Lemma implies that $P_\w^{nv'}(T_{nv} < \infty )$ is almost surely eventually less than $e^{-C' n}$ for some constant $C'>0$ for all $n$ large.
In particular, this implies that
$$\lim_{n\to\infty}\frac{1}{n^{1-1/s}}\log P_\omega^{nv'}(T_{nv}<\infty)=-\infty,\qquad \alpha\text{-a.s.,}$$
which concludes our proof.
\end{proof}
To prove Proposition \ref{prop2} let us first define a new measure
$\tilde{\alpha}$ on environments by  $\tilde{\alpha}(\omega \in \cdot ) = \alpha( \theta^{\nu_0}\omega \in \cdot)$.
That is, $\alpha$ is the distribution of the environment shifted so that the ladder point $\nu_0 \leq 0$ is at the origin. Compare this with the distribution $Q$ which is obtained instead by \emph{conditioning} $\nu_0$ to be at the origin.
We show next that $\tilde{\alpha}$ is in fact absolutely continuous with respect to $Q$.
\begin{lem}\label{PQ}
$\tilde{\alpha}$ is absolutely continuous with respect to $Q$.
\end{lem}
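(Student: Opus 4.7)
The plan is to partition $\tilde{\alpha}(A)$ according to the (a.s.\ finite, non-positive) value of $\nu_0$ under $\alpha$ and then, for each fixed $k \leq 0$, use the shift-invariance of the i.i.d.\ product measure $\alpha$ to rewrite the event entirely in terms of the shifted environment. Concretely, I would first write
\[
\tilde{\alpha}(A) \;=\; \alpha\bigl( \theta^{\nu_0}\omega \in A \bigr) \;=\; \sum_{k \leq 0} \alpha\bigl( \nu_0(\omega) = k,\; \theta^k \omega \in A \bigr),
\]
and then, for each fixed $k$, change variables to $\omega' := \theta^k \omega$. Since $k$ is deterministic, the map $\omega \mapsto \theta^k \omega$ preserves $\alpha$.

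The main (small) technical step is to translate the condition $\nu_0(\omega) = k$ into an event depending on the potential of $\omega'$. A direct computation using $\rho_i^{(\omega)} = \rho_{i-k}^{(\omega')}$ gives $V_\omega(x) = V_{\omega'}(x-k) - V_{\omega'}(-k)$. Substituting this into the definition of $\nu_0$, the event $\{\nu_0(\omega) = k\}$ becomes the conjunction of
\[
V_{\omega'}(m) > 0 \;\;\text{for all } m < 0 \qquad \text{and} \qquad V_{\omega'}(m) \geq 0 \;\;\text{for } 0 \leq m \leq -k.
\]
The first of these is exactly $\{\nu_0(\omega') = 0\}$, and the second is an extra restriction on the potential to the right of the origin. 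Consequently,
\[
\alpha\bigl( \nu_0(\omega) = k,\; \theta^k \omega \in A \bigr) \;=\; \alpha\bigl( \nu_0(\omega') = 0,\; V_{\omega'}(m) \geq 0 \text{ for } 0 \leq m \leq -k,\; \omega' \in A \bigr).
\]

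From here the conclusion is immediate: each summand is bounded above by $\alpha(\nu_0 = 0, \omega' \in A) = \alpha(\nu_0 = 0)\, Q(A)$, so if $Q(A) = 0$ then every summand vanishes and therefore $\tilde{\alpha}(A) = 0$. This establishes $\tilde{\alpha} \ll Q$. The only obstacle in the argument is the bookkeeping involved in the potential-shift identity above; once that is in place, the rest is a routine domination. I note in passing that summing the identity over $k$ in fact identifies the Radon--Nikodym derivative explicitly: since $\sum_{k \leq 0} \mathbf{1}\{V(m) \geq 0,\; 0 \leq m \leq -k\} = \nu_1(\omega)$ under $Q$, one obtains $\tfrac{d\tilde{\alpha}}{dQ}(\omega) = \alpha(\nu_0 = 0)\,\nu_1(\omega)$, which is finite $Q$-a.s.\ thanks to the exponential tails of $\nu_1 = l_0$ recorded in \eqref{Qltails}, though only the weaker statement of absolute continuity is needed here.
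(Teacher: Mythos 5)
Your proof is correct and follows essentially the same strategy as the paper: decompose by the value of $\nu_0$, shift by that value using translation invariance of the i.i.d.\ product measure $\alpha$, and recognize that after the shift the event $\{\nu_0 = -k\}$ turns into the pair of constraints $\{\nu_0 = 0\}$ and $\{\nu_1 > k\}$. The one place you elide some detail is the claim that, given $V_{\omega'}(m) > 0$ for $m < 0$, the condition ``no $y \in (k,0]$ is a new strict left minimum of $V_\omega$'' is equivalent to $V_{\omega'}(m) \geq 0$ for $0 \leq m \leq -k$; this requires a short induction on $m$ (using that $V_{\omega'}(0)=0$ is the minimum over $(-\infty,0]$), which you should spell out. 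The main difference from the paper is presentational: the paper factors through conditional probabilities and independence of the left and right halves, arriving at $\tilde{\alpha}(A) = \sum_{k\geq 0} \alpha(\nu_0=-k)\,Q(A,\nu_1>k)/Q(\nu_1>k)$, whereas you change variables directly and obtain the cleaner identity $\alpha(\nu_0=-k,\theta^{-k}\omega\in A) = \alpha(\nu_0=0)\,Q(A,\nu_1>k)$, from which the Radon--Nikodym derivative $\alpha(\nu_0=0)\,\nu_1(\omega)$ drops out immediately. The two formulas agree once one notices your identity also gives $\alpha(\nu_0=-k)=\alpha(\nu_0=0)\,Q(\nu_1>k)$, so the paper's coefficients $r_k$ are in fact all equal to $\alpha(\nu_0=0)$; your version is marginally cleaner and makes the $Q$-a.s.\ finiteness of the density transparent via the exponential tail of $\nu_1$.
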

\begin{proof}
First of all, note that
\[
 \{\nu_0 = -k\} = \left\{ \Pi_{j,-k-1} < 1 \text{ for } j < -k , \, \Pi_{-k,j} \geq 1 \text{ for } -k\leq j\leq -1 \right\}.
\]
Therefore, for any event $A\in\sigma(\{\omega_z\},z\in \mathbb{Z})$,
\begin{align*}
\tilde{\alpha}(\omega \in A)
&=\sum_{k=0}^\infty \alpha(\nu_0=-k)\alpha(\theta^{-k}\omega\in A \, | \, \nu_0=-k)\\
&=\sum_{k=0}^\infty \alpha(\nu_0=-k)\alpha(\theta^{-k}\omega\in A\, | \,\Pi_{j,-k-1} < 1 \text{ for } j < -k , \, \Pi_{-k,j} \geq 1 \text{ for } -k\leq j\leq -1 )\\
&= \sum_{k=0}^\infty \alpha(\nu_0=-k)\alpha( \omega\in A\, | \,\Pi_{j,-1} < 1 \text{ for } j < 0 , \, \Pi_{0,j} \geq 1 \text{ for } 0\leq j\leq k-1 )\\
&= \sum_{k=0}^\infty \alpha(\nu_0=-k) \frac{\alpha( \omega\in A, \, \Pi_{0,j} \geq 1 \text{ for } 0\leq j\leq k-1 \, | \, \Pi_{j,-1} < 1 \text{ for } j < 0) }{ \alpha( \Pi_{0,j} \geq 1 \text{ for } 0\leq j\leq k-1 \, | \, \Pi_{j,-1} < 1 \text{ for } j < 0  ) } \\
&= \sum_{k=0}^\infty \alpha(\nu_0=-k) \frac{Q( \omega \in A, \, \nu_1 > k )}{ Q(\nu_1 > k)}.
\end{align*}
Therefore, if $Q(\omega \in A) = 0$ then $\tilde{\alpha}(\omega \in A) = 0$ also. That is, $\tilde{\alpha}$ is absolutely continuous with respect to $Q$.
\end{proof}
\begin{remark}
 In fact, the above proof shows that $\frac{d\tilde{\alpha}}{dQ}(\omega) = \sum_{k=0}^{\nu_1(\omega)-1} r_k$, where $r_k = \frac{\alpha(\nu_0 = -k)}{Q(\nu_1 > k)}$.
\end{remark}

We now show how the measure $\tilde\alpha$ is helpful for proving Proposition \ref{prop2}. Since $\nu_0\leq 0$ for any environment $\omega$, we have
$$P_\omega(T_{\nu_n}>u\nu_n)\leq P^{\nu_0}_\omega(T_{\nu_n}>u\nu_n),
$$
and thus to prove Proposition \eqref{prop2} it will be enough to show that the conclusion holds with $\tilde{\alpha}$ in place of $\alpha$. That is, we need to show that
$$\liminf_{n\to\infty}\frac{1}{n^{1-1/s}}\log P_\omega(T_{\nu_n}>u\nu_n)=-\infty, \qquad \tilde{\alpha}\text{-a.s.}$$
However, since Lemma \ref{PQ} shows that $\tilde{\alpha}$ is absolutely continuous with respect to $Q$, the above limit will follow if we can show the same almost sure limit under the measure $Q$.
That is, we have reduced the proof of Proposition \ref{prop2} to the following.
\begin{prop}\label{propQ}
Under the same assumptions as Theorem \ref{thm0}, for any $u\in(\frac{1}{ v_\alpha},\infty),$
\begin{equation}
\liminf_{n\to\infty}\frac{1}{n^{1-1/s}}\log P_\omega(T_{\nu_n}>u{\nu_n})=-\infty, \qquad Q\text{-a.s.}\label{Qsubexp}
\end{equation}
\end{prop}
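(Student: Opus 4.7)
The plan is to show that for each $K>0$, $\liminf_{n\to\infty} n^{-(1-1/s)}\log P_\omega(T_{\nu_n}>u\nu_n)\leq -K$, $Q$-a.s., which upon letting $K\to\infty$ along a countable sequence yields \eqref{Qsubexp}. Fix such a $K$, set $b_n = c(K)\,n^{1/s}$ for a small $c(K)>0$, and $k_n = \lceil K_1 n^{1-1/s}\rceil$ for a large $K_1 = K_1(K)$. Define the \emph{good event} $G_n = G_n^{(K)}$ by requiring simultaneously: (i) $\max_{-k_n \leq i < n} M_i \leq b_n$; (ii) the deviation $|\sum_{i=-k_n}^{n-1}(\beta_i - E_Q[\beta_0])| \leq \varepsilon(n+k_n)$ for a small $\varepsilon=\varepsilon(K,u)$, an event of high $Q$-probability by Proposition \ref{prop1}; (iii) $l_i \leq (\log n)^2$ throughout $-k_n \leq i < n$ and $W_{\nu_{-k_n}-1} \leq (\log n)^2$, from \eqref{Qltails} and \eqref{lwbound}; (iv) $V(\nu_{-k_n}) \geq (c_0/2)\,k_n$ with $c_0 = E_Q[V(\nu_{-1})]>0$, from the law of large numbers applied to the i.i.d.\ ladder increments under $Q$.

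On $G_n$, set $m = \nu_{-k_n}$ and split
\[
P_\omega(T_{\nu_n}>u\nu_n) \leq P_\omega(T_m < T_{\nu_n}) + P_{\omega(m)}(T_{\nu_n} > u\nu_n).
\]
The escape probability is bounded via the gambler's ruin identity
\[
P_\omega^0(T_m < T_{\nu_n}) = \frac{\sum_{j=0}^{\nu_n-1}e^{V(j)}}{\sum_{j=m}^{\nu_n-1}e^{V(j)}},
\]
whose numerator is polynomial in $n$ since the heights are capped by $\log b_n$, while the denominator exceeds $e^{V(m)} \geq e^{c_0 k_n/2}$; taking $K_1$ large enough makes this at most $\tfrac{1}{2}\exp(-K n^{1-1/s})$. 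For the reflected probability we apply Chebyshev's inequality together with Corollary \ref{cor-1}, exploiting the independence of the ladder crossings $\zeta_j = T_{\nu_{j+1}} - T_{\nu_j}$ under $P_{\omega(m)}$. A careful block-by-block application of Corollary \ref{cor-1} on short subintervals (with locally chosen reflections) yields a uniform sub-exponential quenched tail $P_{\omega(m)}(\zeta_j > t) \leq C e^{-t/(C' b_n)}$ on $G_n$, and then Chebyshev applied to the product MGF $\prod_j E_{\omega(m)}[e^{\lambda\zeta_j}]$ with $\lambda \sim 1/b_n$, together with the good-event bound $\sum\beta_j^2 \leq b_n \sum\beta_j \leq n b_n (E_Q[\beta_0]+\varepsilon)$, yields $P_{\omega(m)}(T_{\nu_n}>u\nu_n) \leq \tfrac{1}{2}\exp(-K n^{1-1/s})$ once $c(K)$ is taken small enough.

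The crucial step for the infinitely-often statement is the observation that condition (i), viewed up to finite modifications of the i.i.d.\ sequence $(M_i)_{i\in\mathbb{Z}}$ under $Q$, is a tail event: since $b_n \to \infty$, any fixed finite collection of $M_i$'s eventually lies below the diverging threshold, so for all sufficiently large $n$ the event $\{\max_{i<n} M_i \leq b_n\}$ depends only on the tail of $(M_i)$. By \eqref{M}, $Q(\max_{i<n} M_i \leq b_n) \to \exp(-C_0/c(K)^s) > 0$, so $\limsup_n Q(G_n) > 0$; Kolmogorov's zero-one law then forces (i) to occur for infinitely many $n$, $Q$-a.s. The analogous statement for indices in $[-k_n, 0)$ is handled identically (the blocks $\mathfrak{B}_i$ being i.i.d.\ under $Q$), while conditions (ii)--(iv) all hold for $n$ sufficiently large by Borel--Cantelli applied to their super-polynomial decay. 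Thus $G_n$ occurs i.o., $Q$-a.s., and combined with the preceding tail bound we obtain $P_\omega(T_{\nu_n} > u\nu_n) \leq \exp(-K n^{1-1/s})$ for infinitely many $n$, which is the desired subsequential estimate.

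The main technical obstacle is extracting the proper $n^{1-1/s}$ scale from Corollary \ref{cor-1}. A direct Chebyshev application on the full interval $[m,\nu_n]$ is valid only for $\lambda \leq C/\tilde D$ with $\tilde D = E_{\omega(m)}^m[T_{\nu_n}] \sim n$, and therefore produces only a constant rate $-\log P = O(1)$. Recovering the subexponential scale $n^{1-1/s}$ requires exploiting the heavy-tailed, ``one-big-jump''-type structure of $T_{\nu_n} = \sum_j \zeta_j$: one must apply Corollary \ref{cor-1} to short subintervals where the effective denominator $\tilde D_{\mathrm{local}}$ is of order $b_n$ rather than $n$, yielding per-block sub-exponential tails at the right scale, then combine these through the joint MGF with $\lambda \sim 1/b_n$ to get the desired deviation bound. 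Balancing the depth of the local reflections (controlled by $k_n$ and by the length of the subintervals) against the associated escape probabilities so that \emph{both} contributions fit within the $\exp(-K n^{1-1/s})$ budget is the principal technical difficulty to be overcome.
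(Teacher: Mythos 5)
Your outline shares the high-level structure of the paper's proof (add a far-left reflection, bound the escape probability, bound the reflected crossing time via Corollary \ref{cor-1}, and argue the good event occurs infinitely often), and your use of Kolmogorov's zero--one law to get the infinitely-often statement for condition (i) is a genuinely slicker device than the paper's super-exponentially sparse subsequence $n_k = m^{m^k}$ combined with the second Borel--Cantelli lemma in Corollary \ref{lm36}. However, the central estimate is not established, and you acknowledge this yourself.

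The gap is in the claimed bound $P_{\omega(m)}(\zeta_j > t) \leq C e^{-t/(C' b_n)}$ for $\zeta_j = T_{\nu_{j+1}} - T_{\nu_j}$ under $P_{\omega(m)}$. Adding a reflection \emph{closer} to $\nu_j$ (a ``local reflection'') makes the crossing time \emph{stochastically smaller}, not larger: with a reflection at $\nu_{j-c}$ the walk cannot wander left of $\nu_{j-c}$, so it reaches $\nu_{j+1}$ sooner than under $P_{\omega(m)}$ where the reflection is the distant point $m = \nu_{-k_n}$. Thus Corollary \ref{cor-1} applied ``on short subintervals with locally chosen reflections'' bounds the \emph{wrong} quantity---it bounds a random variable that $\zeta_j$ dominates, not one that dominates $\zeta_j$. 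To convert a local bound into a bound on $\zeta_j$ one must separately control the event that the walk, started at $\nu_j$, escapes many ladder blocks to the left before reaching $\nu_{j+1}$, and bound the contribution of those excursions to the MGF. This is precisely what the paper's birth--death chain $\{Z_i\}$ and the decomposition \eqref{main:inequal} accomplish: the crossings $\Theta_i$ are bounded by the time with a reflection one block to the left (with the inequality running in the correct direction via a coupling that stops at the first exit), escapes past $\nu_{-n_k}$ are swept into $\{N_k \neq \tilde{N}_k\}$, and the number of left-steps of the birth--death chain is controlled by comparison with the auxiliary biased walk $S_i$. Your proposal gestures at ``balancing the depth of the local reflections against the associated escape probabilities'' as ``the principal technical difficulty to be overcome'' without overcoming it, so at present the proof is not complete at its key step, and completing it would require essentially reconstructing the paper's birth--death chain argument.

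A secondary remark: condition (ii) in your good event invokes Proposition \ref{prop1}, but that proposition controls $\sum(\beta_i \mathbf{I}_{\{M_i \leq b_n\}} - E_Q[\beta_0])$ rather than $\sum(\beta_i - E_Q[\beta_0])$; this is harmless because on the intersection with condition (i) the indicator is always $1$, but it should be stated carefully so that Proposition \ref{prop1} is applied to the correct event before intersecting with (i). Also, the gambler's-ruin lower bound on the denominator should use $e^{V(\nu_{-k_n}+1)}$ or, more robustly, $e^{V(\nu_{-k_n+1})}$ rather than $e^{V(m)}$ itself (the denominator sum starts at index $m+1$), but this changes nothing substantive since $V(\nu_{-k_n+1})$ is still linear in $k_n$ by the same law of large numbers.
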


The remainder of the paper is devoted to the proof of Proposition \ref{propQ}.
We will follow the approach of \cite{Zei98} by dividing the environment into large blocks and then analyzing the crossing times of these large blocks. The main improvement we make is that we obtain better estimates on the quenched moment generating functions of these crossing times using the results from Section \ref{sec:mgf}.
To decompose the environment into blocks,
fix an integer $m>s$, let $n_k=m^{m^k}$ for $k\geq 0$ and let $a_k = n_k^{1/s}/D$ for some fixed $D>1$ which we will later allow to be arbitrarily large.
The blocks of the environment will be the intervals between ladder locations $\nu_{j a_k}$ and $\nu_{(j+1)a_k}$ for $j \in \Z$.
To simplify notation, let us denote the ladder locations at the edges of the blocks by
$$\nu(j,k):=\nu_{j a_k}, \qquad j\in \mathbb{Z}, \, k\geq 1.$$

The path of the random walk $X_n$ on $\mathbb{Z}$ naturally defines a birth-death chain by observing how the random walk moves from one $\nu(j,k)$ to either $\nu(j-1,k)$ or $\nu(j+1,k)$.
To be precise, let $\{t_i\}_{i\geq 0}$ be the sequence of times when the random walk reaches a ladder point $\nu(j,k)$ different from the last such ladder point visited. That is, $t_0 = 0$ and
\[
 t_i = \inf\left\{ n > t_{i-1}: X_n \in \{\nu(j,k)\}_{j\in \mathbb{Z}} \text{ and } X_n \neq X_{t_{i-1}} \right\}, \quad i \geq 1.
\]
We then obtain a birth-death process $\{Z_i\}_{i\geq 0}$ on $\mathbb{Z}$ by letting $X_{t_i} = \nu(Z_i,k)$.
If we let $\Theta_i = t_i - t_{i-1}$, then it follows that
\[
 T_{\nu_{n_k}} \leq \sum_{i=1}^{N_k} \Theta_i,
\]
where $N_k = \inf\{ i\geq 1: \, Z_i \geq n_k/a_k \}$ is the time needed for the induced birth-death process to move at least $n_k/a_k$ to the right.
If we also define $\tilde{N}_k = \inf\{ i\geq 1: \, |Z_i| \geq n_k/a_k \}$ to be the time for the birth-death process to exit $(-n_k/a_k,n_k/a_k)$ then it follows for any fixed $L$ that
\begin{equation}\label{main:inequal}
P_\omega( T_{\nu_{n_k}} > u \nu_{n_k} ) \leq
 P_\omega(N_k \neq \tilde{N}_k) +  P_\omega( \tilde{N}_k > L, \,  N_k = \tilde{N}_k ) + P_\omega\left( \sum_{i=1}^{\tilde{N}_k} \Theta_i > u \nu_{n_k}, \, \tilde{N}_k \leq L \right).
\end{equation}
We will show below that the environment is such that for $k$ large enough the induced birth-death process has a very strong drift to the right so that by choosing $L$ large enough we can make the first two probabilities on the right above very small.
The last probability on the right is the key term, and we will obtain control on this by obtaining certain uniform upper bounds on the time it takes a random walk started at $\nu(j,k)$ to reach either $\nu(j-1,k)$ or $\nu(j+1,k)$.

The following result shows that the first term in \eqref{main:inequal} has an exponential tail.

\begin{lem}\label{main:lemma1}
There exist $\delta>0$ such that for $Q$-a.e.\ environment $\omega$ there is an integer $K(\omega)<\infty$ such that
$$P_\omega(N_k \neq \tilde{N}_k)\leq e^{-\delta n_k}, \quad \forall k \geq K(\omega).$$
\end{lem}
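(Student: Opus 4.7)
The plan is to realize $\{N_k \neq \tilde N_k\}$ as the event that the induced birth--death chain $\{Z_i\}$, which starts at $Z_0 = 0$ since $\nu_0 = 0$ under $Q$, exits the interval $(-R_k, R_k)$ on the left, where $R_k := n_k/a_k = D\, n_k^{1-1/s}$. Writing $p_j^k := P_\omega^{\nu(j,k)}(T_{\nu(j+1,k)} < T_{\nu(j-1,k)})$ for the right-step probability of $Z$ from state $j$, the standard gambler's ruin computation for an inhomogeneous birth--death chain gives
\[
P_\omega(N_k \neq \tilde N_k) \leq \frac{r_k^{R_k}}{1-r_k}, \qquad r_k := \max_{|j|<R_k}\frac{1-p_j^k}{p_j^k},
\]
as soon as $r_k<1$. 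So the task is to produce a $Q$-a.s.\ upper bound on $r_k$ that makes the right-hand side at most $e^{-\delta n_k}$ eventually.

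The classical exit-probability formula in terms of the potential $V$ gives
\[
\frac{1-p_j^k}{p_j^k} = \frac{\sum_{y=\nu(j,k)+1}^{\nu(j+1,k)} e^{V(y)-V(\nu(j,k))}}{\sum_{y=\nu(j-1,k)+1}^{\nu(j,k)} e^{V(y)-V(\nu(j,k))}}.
\]
Since $V(y)\geq V(\nu_l)$ on $[\nu_l,\nu_{l+1})$ and the potential at ladder points is strictly decreasing, keeping only the term $y=\nu(j-1,k)+1$ in the denominator yields the lower bound $e^{V(\nu(j-1,k))-V(\nu(j,k))} = \prod_{l=(j-1)a_k}^{ja_k-1}\Pi_{\nu_l,\nu_{l+1}-1}^{-1}$, a product of $a_k$ i.i.d.\ random variables strictly larger than $1$ under $Q$. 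For the numerator, each summand with $y\in[\nu_l,\nu_{l+1}]$ and $l\geq ja_k$ is at most $M_l$ (since $V(\nu_l)\leq V(\nu(j,k))$ and $V(y)\leq V(\nu_l)+\log M_l$), so the numerator is bounded by $a_k\,(\max_l l_l)(\max_l M_l)$ with maxima over $l\in[ja_k,(j+1)a_k)$.

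Next I would combine the super-exponential growth of $n_k = m^{m^k}$ with Borel--Cantelli to control the environment uniformly on a $Q$-full-measure set. From \eqref{Qltails} and \eqref{M}, union bounds over $|l|\leq n_k$ give
\[
Q\bigl(\max_{|l|\leq n_k} l_l > (\log n_k)^2\bigr) \leq C n_k e^{-C'(\log n_k)^2}, \qquad Q\bigl(\max_{|l|\leq n_k} M_l > n_k^{2/s}\bigr) \leq C n_k^{-1},
\]
both summable in $k$. A Cram\'er-type large-deviation bound for the i.i.d.\ nonnegative variables $-\log\Pi_{\nu_l,\nu_{l+1}-1}$, whose mean under $Q$ is strictly positive, combined with a union bound over the $2R_k$ disjoint block-sums of length $a_k$, provides a constant $c>0$ with
\[
Q\Bigl(\min_{|j|\leq R_k}\prod_{l=(j-1)a_k}^{ja_k-1}\Pi_{\nu_l,\nu_{l+1}-1}^{-1} < e^{c a_k}\Bigr) \leq 2R_k\, e^{-c' a_k},
\]
again summable in $k$. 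By Borel--Cantelli, for $Q$-a.e.\ $\omega$ all three good events hold simultaneously for every $k\geq K(\omega)$.

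On this intersection, $r_k \leq a_k (\log n_k)^2 n_k^{2/s} e^{-c a_k}$, and since $a_k=n_k^{1/s}/D$ dominates every polynomial in $\log n_k$, for $k$ large we get $r_k\leq e^{-c a_k/2}$. Using $a_k R_k = n_k$, the gambler's ruin bound yields
\[
P_\omega(N_k \neq \tilde N_k) \leq 2 e^{-c a_k R_k/2} = 2 e^{-c n_k/2},
\]
establishing the lemma with any $\delta<c/2$. The main obstacle is getting uniform control of the environment across all $O(n_k)$ ladder blocks the walk can encounter; this is precisely why the super-exponential scale $n_k = m^{m^k}$ is chosen, as it keeps all union-bound error probabilities summable in $k$ and allows Borel--Cantelli to deliver the almost-sure conclusion.
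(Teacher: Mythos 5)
Your argument is genuinely different from the paper's, and it works in spirit, but there is one small slip worth flagging. You bound $P_\omega(N_k\neq\tilde N_k)$ by a hands-on gambler's-ruin estimate for the induced birth--death chain, controlling the block-to-block transition ratios $\frac{1-p_j^k}{p_j^k}$ via the potential $V$ uniformly over $j\in J_{n_k}$ by three Borel--Cantelli union bounds (exponential tails of $l_i$, polynomial tails of $M_i$, and a Cram\'er bound for the i.i.d.\ positive drops $V(\nu_l)-V(\nu_{l+1})$). The paper instead does no spatial analysis at all: it simply observes $\{N_k\neq\tilde N_k\}\subset\{T_{-n_k}<\infty\}$, replaces $Q$ by $\alpha$ at the cost of the factor $1/\alpha(\nu_0=0)$, invokes the known annealed bound $\P(T_{-n_k}<\infty)\leq e^{-Cn_k}$ from Gantert--Shi, and finishes with Chebyshev and Borel--Cantelli. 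The paper's route is shorter and requires no control of the environment block-by-block; yours is more elementary and self-contained, making visible \emph{why} the bound holds (positive drop of the potential per ladder block), at the cost of more bookkeeping and of the extra technical conditions you must verify.

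The one gap: you claim that keeping the single term $y=\nu(j-1,k)+1$ in the denominator $\sum_{y=\nu(j-1,k)+1}^{\nu(j,k)}e^{V(y)-V(\nu(j,k))}$ gives the lower bound $e^{V(\nu(j-1,k))-V(\nu(j,k))}$. That term actually equals $e^{V(\nu(j-1,k)+1)-V(\nu(j,k))}=\rho_{\nu(j-1,k)}\,e^{V(\nu(j-1,k))-V(\nu(j,k))}$, and nothing prevents $\rho_{\nu(j-1,k)}$ from being small (or zero, if $\omega$ has reflection points). The quantity $V(\nu(j-1,k))-V(\nu(j,k))$ is not attained by any single summand, since $V(\nu(j-1,k))$ is the value at the left endpoint, which is excluded from the sum. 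The fix is easy: retain the term $y=\nu_{(j-1)a_k+1}$ instead. Since the potential at ladder points is strictly decreasing, this gives the lower bound $e^{V(\nu_{(j-1)a_k+1})-V(\nu(j,k))}=\prod_{l=(j-1)a_k+1}^{ja_k-1}\Pi_{\nu_l,\nu_{l+1}-1}^{-1}$, a product of $a_k-1$ i.i.d.\ factors each $>1$, to which your Cram\'er argument (valid because the summands $-\log\Pi_{\nu_l,\nu_{l+1}-1}$ are nonnegative, so the lower-tail exponential moment is automatic) applies without change. With that repair the rest of your estimate, $r_k\leq a_k(\log n_k)^2 n_k^{2/s}e^{-c(a_k-1)}\leq e^{-ca_k/2}$ for large $k$, and the final bound $r_k^{R_k}/(1-r_k)\leq 2e^{-cn_k/2}$ using $a_kR_k=n_k$, go through as written.
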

\begin{proof}
The event $$\{ N_k \neq \tilde{N}_k \} \subset \{ T_{\nu_{-n_k}} < \infty \} \subset \{T_{-n_k} < \infty\}.$$ Therefore,
\begin{equation}
 Q\left( P_\w( N_k \neq \tilde{N}_k ) > e^{-\delta n_k} \right) \leq Q\left( P_\w( T_{-n_k} < \infty ) > e^{-\delta n_k} \right)
\leq e^{\d n_k} E_Q\left[ P_\w( T_{-n_k} < \infty ) \right].\label{lem1bc}
\end{equation}
Since $\alpha(\nu_0=0)>0$ and $Q(\cdot)=\alpha(\cdot|\nu_0=0)$, we have
\begin{equation}\label{Qbtprob}
E_Q\left[ P_\w( T_{-n_k} < \infty ) \right]=\frac{E_\a\left[ P_\w(T_{-n_k} < \infty) \mathbf{1}_{\nu_0=0} \right]}{\a(\nu_0= 0)} \leq\frac{\P(T_{-n_k} < \infty)}{\alpha(\nu_0=0)} \leq \frac{e^{-C n_k}}{\alpha(\nu_0=0)},
\end{equation}
where the last inequality holds by Lemma 3.3 in \cite{GS02}.
Finally, if $\d>0$ is chosen sufficiently small then \eqref{lem1bc} is summable in $k$ and so the Borel-Cantelli Lemma completes the proof.
\end{proof}
In order to determine the decay rate of the second and third term in \eqref{main:inequal}, we first define a set
\[
 J_{n_k}
= [ -n_k/a_k, \, n_k/a_k] \cap \Z
= \left\{-\left\lfloor \frac{n_k}{a_k} \right\rfloor, -\left\lfloor \frac{n_k}{a_k} \right\rfloor+1,\ldots, \left\lfloor \frac{n_k}{a_k} \right\rfloor \right\}.
\]
Clearly, if $N_k=\tilde{N}_k$ then the birth-death process $Z_i \in J_{n_k}$ when $t_i < T_{\nu_{n_k}}$.
So, we only need to observe paths of the birth-death process $\{Z_i\}_{i\geq0}$ restricted to $J_{n_k}$ and analyze its associated probability.  The following lemma gives a uniform upper bound (for all $k$ large enough) on the probability that the birth-death process steps to the left before time $\tilde{N}_k$.

\begin{lem}\label{lm31}
There exist $\delta'>0$ such that
\begin{equation}
Q\left(\max_{j \in J_{n_k}} P_\omega^{\nu(j,k)}( T_{\nu(j-1,k)} < T_{\nu(j+1,k)} ) > e^{-\delta' a_k}\quad i.o.\right)=0.\label{bc1}
\end{equation}
\end{lem}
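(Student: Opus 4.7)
The plan is to apply the classical gambler's ruin identity for RWRE and then show that for typical environments the resulting deterministic bound is sufficiently small. Using the harmonic function $\phi(x) = \sum_{j=0}^{x-1} e^{V(j)}$ for the quenched Markov chain, for each $j \in J_{n_k}$ we have
$$P_\omega^{\nu(j,k)}\bigl(T_{\nu(j-1,k)} < T_{\nu(j+1,k)}\bigr) = \frac{\sum_{i=\nu(j,k)}^{\nu(j+1,k)-1} e^{V(i)}}{\sum_{i=\nu(j-1,k)}^{\nu(j+1,k)-1} e^{V(i)}} \leq e^{-V(\nu(j-1,k))}\sum_{i=\nu(j,k)}^{\nu(j+1,k)-1} e^{V(i)},$$
where the inequality retains only the single term $e^{V(\nu(j-1,k))}$ in the denominator. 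Factoring $e^{V(\nu(j,k))}$ out of the numerator, decomposing the remaining sum into the $a_k$ ladder sub-blocks $[\nu_i,\nu_{i+1})$ contained in $[\nu(j,k),\nu(j+1,k))$, and using that within each sub-block $e^{V(r)-V(\nu_i)} \leq M_i$ while also $V(\nu_i) \leq V(\nu(j,k))$ for $i \geq j a_k$, one arrives at the deterministic bound
$$P_\omega^{\nu(j,k)}\bigl(T_{\nu(j-1,k)} < T_{\nu(j+1,k)}\bigr) \leq e^{-\Delta_{j,k}} \sum_{i=j a_k}^{(j+1)a_k - 1} l_i M_i,$$
where $\Delta_{j,k} := V(\nu(j-1,k)) - V(\nu(j,k)) = -\sum_{i=(j-1)a_k}^{j a_k - 1} \log \Pi_{\nu_i, \nu_{i+1}-1}$ is the potential drop across the $(j-1)$-st coarse block.

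Next I would control the two factors separately. Under $Q$ the random variables $\log \Pi_{\nu_i, \nu_{i+1}-1}$ are i.i.d.\ with negative mean $\mu_0 := E_Q[\log \Pi_{\nu_0, \nu_1-1}] < 0$, this negativity being exactly what makes $\{\nu_i\}$ ladder points. By Cram\'er's theorem there exists $c > 0$ such that $Q\bigl(\Delta_{j,k} < (-\mu_0/2)\, a_k\bigr) \leq e^{-c a_k}$, and a union bound over the $|J_{n_k}| \leq 3 n_k/a_k$ indices gives
$$Q\Bigl(\min_{j \in J_{n_k}} \Delta_{j,k} < (-\mu_0/2)\, a_k\Bigr) \leq C (n_k/a_k) e^{-c a_k},$$
which is summable in $k$ since $a_k = n_k^{1/s}/D$ while $n_k$ grows like $m^{m^k}$.

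For the sum of $l_i M_i$, fix a small $\epsilon > 0$. By \eqref{Qltails} a union bound over $|i| \leq 2 n_k$ gives $Q\bigl(\max_i l_i > (\log n_k)^2\bigr) \leq C n_k e^{-c' (\log n_k)^2}$, while by \eqref{M} we have $Q\bigl(\max_i M_i > e^{\epsilon a_k}\bigr) \leq C n_k e^{-s \epsilon a_k}$; both are summable in $k$. Outside this bad event
$$\max_{j \in J_{n_k}} \sum_{i = j a_k}^{(j+1)a_k - 1} l_i M_i \leq a_k (\log n_k)^2 e^{\epsilon a_k},$$
so combining with the bound on $\Delta_{j,k}$ we obtain, off a summable-in-$k$ $Q$-bad event,
$$\max_{j \in J_{n_k}} P_\omega^{\nu(j,k)}\bigl(T_{\nu(j-1,k)} < T_{\nu(j+1,k)}\bigr) \leq a_k (\log n_k)^2 e^{-((-\mu_0/2) - \epsilon)\, a_k} \leq e^{-\delta' a_k}$$
for any fixed $\delta' \in (0, -\mu_0/2 - \epsilon)$ and all $k$ large enough. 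The Borel--Cantelli lemma then yields \eqref{bc1}.

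The main obstacle is arranging the union bounds on the polynomially-tailed $M_i$: there are of order $n_k$ sub-blocks to control while the target bound only decays stretched-exponentially in $a_k$, which is merely $n_k^{1/s}/D$. This works precisely because we have passed to the subsequence $n_k = m^{m^k}$, along which even a factor of $n_k$ is dominated by $e^{c a_k}$, so that losing a factor of $n_k$ in each union bound leaves a summable series.
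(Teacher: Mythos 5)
Your proof is correct (modulo small notational issues), and it takes a genuinely different route from the paper. The paper gets \eqref{bc1} very quickly: it observes that $\{T_{\nu(j-1,k)} < T_{\nu(j+1,k)}\}$ implies $\{T_{-a_k} < \infty\}$ after shifting, applies the Markov inequality $Q(P_\omega(\cdot) > e^{-\delta' a_k}) \leq e^{\delta' a_k} E_Q[P_\omega(\cdot)]$, and then uses the known annealed bound $\P(T_{-n} < \infty) \leq e^{-Cn}$ from Gantert--Shi (\cite[Lemma 3.3]{GS02}) together with $E_Q[\cdot] \leq E_\a[\cdot]/\a(\nu_0=0)$. You instead prove a pointwise quenched bound via the gambler's ruin identity and verify directly, through a Chernoff lower-deviation estimate for the cumulative potential drop $\Delta_{j,k}$ and heavy-tail union bounds on $l_i$ and $M_i$, that the bound is stretched-exponentially small off a summable bad set. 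Your argument is more self-contained and makes the mechanism visible (potential drop over a coarse block beats any local peak inside it), whereas the paper's is shorter because it outsources the exponential decay to an existing annealed estimate. Both rely on the same final union bound over $j \in J_{n_k}$ and the super-exponential growth of $n_k$ to absorb the polynomial factor.

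Two small points to tighten. First, your Chernoff step assumes $\mu_0 := E_Q[\log \Pi_{\nu_0,\nu_1-1}]$ is a \emph{finite} negative number, but the paper's Assumptions do not guarantee this: if $\a(\omega_0 = 1) > 0$, or if $\log\rho_0$ has a sufficiently heavy left tail, then $E_Q[-V(\nu_1)]$ can be $+\infty$, making the target threshold $(-\mu_0/2) a_k$ ill-defined. This is harmless in substance, since the lower-deviation estimate for i.i.d.\ nonnegative summands with infinite mean is if anything stronger, but you should either truncate $-\log\Pi_{\nu_i,\nu_{i+1}-1}$ at a fixed level $K$ large enough that the truncated mean exceeds some chosen positive constant, or simply pick any fixed $c > 0$ with $Q(-\log\Pi_{\nu_0,\nu_1-1} \leq 2c) < 1$ and run Chernoff with threshold $c\, a_k$. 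Second, the gambler's ruin formula is safer to write with products $\prod_{r}\rho_r$ anchored at $\nu(j-1,k)$ rather than bare exponentials $e^{V(i)}$, so that the expression remains well-defined when the environment has reflection points to the left of $\nu(j-1,k)$; after this rewrite, a reflection point inside $[\nu(j-1,k),\nu(j,k)]$ makes $\Delta_{j,k}=+\infty$ and hence the bound equals $0$, which is consistent with the walk being unable to backtrack. Neither point changes the outcome.
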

\begin{proof}
First, note that
\begin{align*}
 &Q\left(\max_{j \in J_{n_k}} P_\omega^{\nu(j,k)}( T_{\nu(j-1,k)} < T_{\nu(j+1,k)} ) > e^{-\delta' a_k}\right)\\
&\qquad \leq \sum_{j \in J_{n_k}} Q\left( P_\omega^{\nu(j,k)}( T_{\nu(j-1,k)} < T_{\nu(j+1,k)} ) > e^{-\delta' a_k}\right)\\
&\qquad \leq 3 \frac{n_k}{a_k} Q\left( P_\omega( T_{\nu(-1,k)} < T_{\nu(1,k)} ) > e^{-\delta' a_k} \right) \\
&\qquad \leq 3 \frac{n_k}{a_k} Q\left( P_\omega( T_{-a_k} < \infty ) > e^{-\delta' a_k}\right) \\
&\qquad \leq 3 \frac{n_k}{a_k} E_Q\left[ P_\omega( T_{-a_k} < \infty ) \right] e^{\delta' a_k},
\end{align*}
where the second inequality holds because $|J_{n_k}|\leq 3n_k/a_k$ and $Q$ is stationary under shifts of the ladder points of the environment, and the third inequality holds by $\{T_{\nu(-1,k)}<T_{\nu(1,k)}\}\subseteq\{T_{\nu_{-a_k}} < \infty \}\subseteq\{T_{-a_k} < \infty \}$.
Finally, it follows from \eqref{Qbtprob} that the last line is bounded above by $C' \frac{n_k}{a_k} e^{-(C-\d')a_k}$. Since this is summable in $k$ for sufficiently small $\d'>0$, the Borel-Cantelli Lemma finishes the proof of \eqref{bc1}.
\end{proof}

Let $\{S_i\}_{i\geq 0}$ be a simple random walk with
$$P(S_{i+1}=S_i+1|S_i)=1-P(S_{i+1}=S_i-1|S_i)=1-e^{-\delta' a_k}.$$
Since this random walk steps to the right with very high probability, it is unlikely that the random walk takes too long to travel $\fl{n_k/a_k}$ steps to the right.
In particular, if we fix $\d>0$ and let $L_k = \frac{n_k}{a_k(1-\d)}$ then it was shown in \cite[Lemma 9]{Zei98} that
\[
 P\left(\inf \left\{i>0:S_i= \left\lceil \frac{n_k}{a_k}\right\rceil  \right\}>L_k\right)\leq e^{-\delta_1 n_k}.
\]
for some $\d_1>0$.
It follows from Lemma \ref{lm31} that the probability of jumping to left under $S_i$ dominates the probability of jumping to left under $Z_i$ when $Z_i = j \in J_{n_k}$.
As a result, if the process $Z_i$ stays within $J_{n_k}$,
then the random walk $S_i$ will take longer than the process $Z_i$ to reach $\fl{n_k/a_k}$.
That is,
for $k$ sufficiently large (depending on $\w$),
\begin{equation}\label{Nklarge}
P_\omega(\tilde{N}_k> L_k, N_k=\tilde{N}_k)\leq P\left(\inf \left\{i>0:S_i= \left\lceil \frac{n_k}{a_k}\right\rceil \right\}>L_k\right)
\leq  e^{-\d_1 n_k}.
\end{equation}

In order to estimate the decaying rate of the last term in \eqref{main:inequal}, we first find an explicit upper bound of the exponential moment of $\Theta_i$.  Recall that, each $\Theta_i$ is a crossing time from $\nu(Z_{i-1},k)$ to either $\nu(Z_{i-1}-1,k)$ or $\nu(Z_{i-1}+1,k)$ that the walk visits first.  Then, each $\Theta_i$ is less than the crossing time from $\nu(Z_{i-1},k)$ to $\nu(Z_{i-1}+1,k)$ with a reflection at $\nu(Z_{i-1}-1,k)$ for $Z_{i-1}\in \mathbb{Z}$.  Therefore, we have for $\l>0$ that
\begin{align}
E_\omega\left[e^{\lambda \Theta_i\mathbf{I}_{\{Z_{i-1}\in J_{n_k}\}}}\right]&= \sum_{j\in J_{n_k}} P(Z_{i-1}=j)\times E^{\nu(j,k)}_{\omega(\nu(j-1,k))}\left[e^{\lambda T_{\nu(j+1,k)}}\right]+P(Z_{i-1}\notin J_{n_k})\nonumber\\
&\leq\max_{j\in J_{n_k}} E^{\nu(j,k)}_{\omega(\nu(j-1,k))}\left[e^{\lambda T_{\nu(j+1,k)}}\right].\label{thetaine}
\end{align}
By Corollary \ref{cor-1} with $m=\nu(j-1,k), k_0=\nu(j,k)$ and $k_1=\nu(j+1,k)$, the right side of inequality in \eqref{thetaine} has an upper bound in an explicit form. That is, with $\lambda>0$ sufficiently small enough such that
\begin{equation}
\max_{j \in J_{n_k}} E_{\omega(\nu(j-1,k))}^{\nu(j-1,k)}[T_{\nu(j+1,k)}] < \frac{e^{-\l}}{\sinh \l},
\label{condition}
\end{equation}
we have
\[
E^{\nu(j,k)}_{\omega(\nu(j-1,k))}[e^{\l T_{\nu(j+1,k)}}]\leq\exp\left(\frac{\sinh\lambda(E_{\omega(\nu(j-1,k))}^{\nu(j,k)}[T_{\nu(j+1,k)}])}{e^{-\lambda}-\sinh{\lambda}(E_{\omega(\nu(j-1,k))}^{\nu(j-1,k)}[T_{\nu(j+1,k)}])}\right)\] for each $j\in J_{n_k}$.
Therefore, we get
\begin{equation}
E_\omega[e^{\lambda \Theta_i\mathbf{I}_{\{Z_{i-1}\in J_{n_k}\}}}]\leq \max_{j\in J_{n_k}}\exp \left(\frac{\sinh{\lambda}(E_{\omega(\nu(j-1,k))}^{\nu(j,k)}[T_{\nu(j+1,k)}])}{e^{-\lambda}-\sinh{\lambda}(E_{\omega(\nu(j-1,k))}^{\nu(j-1,k)}[T_{\nu(j+1,k)}])}
\right).\label{ref2}
\end{equation}
Note that the requirement that $\lambda>0$ is small enough so that \eqref{condition} is satisfied is needed for \eqref{ref2} to ensure that certain moment generating functions are finite.
Since $e^{-\l}/\sinh \l \ra \infty$ as $\l \ra 0^+$, \eqref{condition} is always satisfied for some small $\l>0$. However, we will later want to apply the upper bound \eqref{ref2} with a deterministic choice of $\l = \l_k = D_0 n_k^{-1/s}$ with some fixed $D_0>0$, and in this case the bound \eqref{condition} may not necessarily be satisfied.
However, we will prove a following claim and show that with this choice of $\l_k$ there is an environment dependent subsequence of $n_k$ where the condition \eqref{condition} is met.
For any fixed constant $\epsilon_1>0$, we will show that
\begin{equation}
Q\left(\max_{j\in J_{n_k}} E_{\omega(\nu(j-1,k))}^{\nu(j-1,k)}[T_{\nu(j+1,k)}]< 2(E_Q[\b_0]+\epsilon_1) a_k \quad i.o \right)=1.\label{ex1}
\end{equation}
Recall that the sequence $a_k = n_k^{1/s}/D$ for some $D>1$.
Since  $\frac{e^{-\l_k}}{\sinh{\l_k}} \sim \frac{1}{\l_k} = \frac{n_k^{1/s}}{D_0}$, it follows from \eqref{ex1} that if the constants $D,D_0$ and $\epsilon_1$ are chosen so that $D>2(E_Q[\b_0]+\epsilon_1)D_0$ then
$$\max_{j\in J_{n_k}} E_{\omega(\nu(j-1,k))}^{\nu(j-1,k)}[T_{\nu(j+1,k)}]<\frac{2(E_Q[\b_0]+\epsilon_1)}{D}n_k^{1/s} \leq \frac{e^{-\l_k}}{\sinh{\l_k}}, \quad \text{infinitely often.} $$
Therefore, it is enough to prove \eqref{ex1} to show that there is almost surely a subsequence of $n_k$ for which \eqref{condition} holds when $\l = \l_k = D_0 n_k^{-1/s}$.

To simplify notation, for any integers $i,j$ such that $i \in [(j-1)a_k,(j+1)a_k-1]$ let $\b_i^j = E_{\w(\nu(j-1,k))}^{\nu_i}[T_{\nu_{i+1}}]$ be the quenched expected crossing time from $\nu_i$ to $\nu_{i+1}$ with a reflection added at $\nu(j-1,k)$.
Then, we can restate \eqref{ex1} as

\begin{equation}
Q\left(\max_{j\in J_{n_k}} \sum_{i=(j-1)a_k}^{(j+1)a_k-1}\beta_i^j<\frac{2(E_Q[\b_0]+\epsilon_1)}{D}n_k^{1/s}\quad i.o\right)=1.\label{Q}
\end{equation}
A strategy for proving \eqref{Q} is to classify the sums of $\beta_i^j$ into two groups by the size of $M_i$ and determine an upper bound of the sums of each group separately. For a fixed $\epsilon > 0$ we will refer to $\{i: \, M_i>n_i^{(1-\epsilon)/s}\}$ and $\{i: \, M_i\leq n_i^{(1-\epsilon)/s}\}$ as ``big hills'' and ``small hills,'' respectively. Then, we begin by a lemma showing the upper bound of a group of $\beta_i^j$ corresponding to small hills using Proposition \ref{prop1}. An upper bound of $\beta_i^j$ corresponding to big hills requires a more careful estimation because $\beta_i^j$ with the biggest hill dominates all of the other $\beta_i^j$'s.  The first step is to prove that $\beta_i^j$ corresponding to big hills are typically located outside of a small group of ladder blocks.  Then, we show that at most one big hill is typically observed at each ladder block.   Finally, we estimate a uniform bound of $\beta_i^j$ corresponding to big hills observed from each ladder block.

The following lemma shows that the maximums of sums of centered expected crossing time with a small hill, $\{M_i\leq n_k^{(1-\epsilon)/s}\}$, are negligible in the limit.
\begin{lem}\label{lm35}
Let us define $J'_{n_k}=J_{n_k}\cup \{-\lfloor n_k/a_k\rfloor-1\}$.  Then, for any $\epsilon_1>0$,
$$Q\left(\max_{j\in J'_{n_k}}\sum_{i=(j)a_k}^{(j+1)a_k-1}(\beta_i^j\mathbf{I}_{\{M_i\leq n_k^{(1-\epsilon)/s}\}}-E_Q[\beta_0])>\frac{\epsilon_1}{2}a_k\quad i.o.\right)=0.$$
\end{lem}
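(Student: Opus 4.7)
The strategy is to reduce to Proposition~\ref{prop1} by dominating each reflected expected crossing time $\beta_i^j$ by the non-reflected $\beta_i$, union-bounding over $j \in J'_{n_k}$, and then applying Borel--Cantelli along the super-exponentially growing subsequence $n_k = m^{m^k}$.

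The first step I would carry out is the pointwise domination $\beta_i^j \leq \beta_i$, valid for every $j \in J'_{n_k}$ and every $i \in [j a_k,(j+1)a_k-1]$. The environment $\omega(\nu(j-1,k))$ differs from $\omega$ only at the site $\nu(j-1,k)$, which lies strictly to the left of the starting point $\nu_i$. Couple the two walks so that they agree until the first visit (if any) to $\nu(j-1,k)$; from then on the reflected walk deterministically moves to $\nu(j-1,k)+1$, while the unreflected walk may make further excursions to the left before returning. Both walks must eventually reach $\nu_{i+1}$, and the reflected walk does so no later than the unreflected one, so taking expectations gives the bound. In particular
\[
\sum_{i=ja_k}^{(j+1)a_k-1}\bigl(\beta_i^j\mathbf{I}_{\{M_i\leq n_k^{(1-\epsilon)/s}\}}-E_Q[\beta_0]\bigr)
\leq
\sum_{i=ja_k}^{(j+1)a_k-1}\bigl(\beta_i\mathbf{I}_{\{M_i\leq n_k^{(1-\epsilon)/s}\}}-E_Q[\beta_0]\bigr).
\]

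Next I would apply a union bound over $j \in J'_{n_k}$ and use the $Q$-stationarity of the environment under shifts by ladder points (which makes the distribution of the right-hand side above independent of $j$) to reduce the probability in Lemma~\ref{lm35} to
\[
|J'_{n_k}|\cdot Q\left(\sum_{i=0}^{a_k-1}\bigl(\beta_i\mathbf{I}_{\{M_i\leq n_k^{(1-\epsilon)/s}\}}-E_Q[\beta_0]\bigr)>\tfrac{\epsilon_1}{2}a_k\right).
\]
With $\eta_1 = 1/s$ and $\eta_2 = (1-\epsilon)/s$, the scalings of $a_k = n_k^{1/s}/D$ and $n_k^{(1-\epsilon)/s}$ match the hypotheses of Proposition~\ref{prop1} (the constant $1/D$ being harmless). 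Proposition~\ref{prop1} then bounds the single-block probability by $C'a_k e^{-C(\log n_k)^2}$, and since $|J'_{n_k}|\leq 3n_k/a_k + 1$, the total bound is at most $C'' n_k e^{-C(\log n_k)^2}$.

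Finally, since $n_k = m^{m^k}$ we have $\log n_k = m^k\log m$ and $(\log n_k)^2$ grows far faster than $\log n_k$, so $\sum_k n_k e^{-C(\log n_k)^2}<\infty$; the Borel--Cantelli lemma then yields the conclusion. The principal obstacle is the monotonicity statement $\beta_i^j\leq \beta_i$: it is intuitively clear that a reflection to the left can only speed up passage to the right, but one must formalize the coupling carefully to ensure it genuinely transfers from pathwise hitting times to quenched expectations. Once that step is in place, the remainder is a clean union bound plus Borel--Cantelli argument calibrated to the rapid growth of $n_k$.
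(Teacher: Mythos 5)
Your proposal is correct and follows essentially the same route as the paper: dominate $\beta_i^j$ by $\beta_i$, union-bound over $j\in J'_{n_k}$ using $Q$-stationarity under ladder-point shifts, invoke Proposition~\ref{prop1} with the scalings $\eta_1=1/s$ and $\eta_2=(1-\epsilon)/s$, and close with Borel--Cantelli. One small remark on the step you flag as delicate: rather than a pathwise coupling (which, as you note, needs care to re-synchronize the two walks after they decouple at the reflection site), the inequality $\beta_i^j\leq\beta_i$ falls out immediately from the explicit formula \eqref{beta}: $E_\omega[\tau_l]=1+2W_l$ with $W_l=\sum_{k\leq l}\Pi_{k,l}$, and adding a reflection at $m=\nu(j-1,k)$ sets $\rho_m=0$, killing every term $\Pi_{k,l}$ with $k\leq m$. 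Summing over $l\in[\nu_i,\nu_{i+1})$ gives $\beta_i-\beta_i^j\geq 0$ term by term, which is the same computation the paper makes explicit in \eqref{dec1}--\eqref{beta:trunc}. This sidesteps the coupling entirely and is the route the paper implicitly relies on when it states $\beta_i^j<\beta_i$ without further comment.
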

\begin{proof}
Since $\beta_i^j<\beta_i$ for any $j\in J'_{n_k}$ and $i\in[(j)a_k,(j+1)a_k-1]$, it suffices to prove
$$Q\left(\max_{j\in J'_{n_k}}\sum_{i=(j)a_k}^{(j+1)a_k-1}(\beta_i\mathbf{I}_{\{M_i\leq n_k^{(1-\epsilon)/s}\}}-E_Q[\beta_0])>\frac{\epsilon_1}{2}a_k\quad i.o.\right)=0.$$
Recall that $\beta_i,i\in\mathbb{Z}$ is stationary under $Q$.  Hence,
\begin{align*}
&Q\left(\max_{j\in J'_{n_k}}\sum_{i=(j)a_k}^{(j+1)a_k-1}(\beta_i\mathbf{I}_{\{M_i\leq n_k^{(1-\epsilon)/s}\}}-E_Q[\beta_0])>\frac{\epsilon_1}{2}a_k\right)\\
&\quad\quad\leq \frac{3n_k}{a_k}Q\left(\sum_{i=0}^{a_k-1}(\beta_i\mathbf{I}_{\{M_i\leq n_k^{(1-\epsilon)/s}\}}-E_Q[\beta_0])>\frac{\epsilon_1}{2}a_k\right)\\
&\quad\quad\leq C n_k e^{-C'(\log n_k)^2}, \text{ for some }C,\,C'>0,
\end{align*}
where the last equality comes from Proposition \ref{prop1}. Then, the conclusion follows by the Borel-Cantelli Lemma.
\end{proof}

Next, a following lemma shows that the maximum $\beta_i^j$ with big hill always occurs in $j\in J_{n_k}\setminus\{-1,0,1\}$ for $k$ large enough.
\begin{lem}\label{lm33}
If $0<\epsilon<1-1/s$, then
\[
Q\left(\max_{\substack{j\in J_{n_k} \\ i\in[(j-1)a_k,(j+1)a_k-1]}}\beta_i^j\mathbf{I}_{\{M_i>n_k^{(1-\epsilon)/s}\}}\neq
\max_{\substack{j\in J_{n_k}\setminus\{-1,0,1\}\\i\in[(j-1)a_k,(j+1)a_k-1]}}\beta_i^j\mathbf{I}_{\{M_i>n_k^{(1-\epsilon)/s}\}}\quad{}i.o.\right)=0. \]
\end{lem}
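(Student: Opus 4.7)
The plan is to reduce the lemma to showing that almost surely, for every $k$ sufficiently large, no index $i$ in $[-2a_k,2a_k-1]$ satisfies $M_i>n_k^{(1-\epsilon)/s}$. Once this reduction is in place, the conclusion follows immediately: since
\[
\bigcup_{j\in\{-1,0,1\}}\bigl[(j-1)a_k,\,(j+1)a_k-1\bigr]=[-2a_k,\,2a_k-1],
\]
the indicator $\mathbf{I}_{\{M_i>n_k^{(1-\epsilon)/s}\}}$ vanishes for every $(j,i)$ with $j\in\{-1,0,1\}$ and $i$ in the corresponding range. Hence the three values $j\in\{-1,0,1\}$ contribute nothing to the maximum, and the max over $J_{n_k}$ equals the max over $J_{n_k}\setminus\{-1,0,1\}$ (either both are zero, or both are attained at some $(j,i)$ with $j\notin\{-1,0,1\}$).

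To estimate the probability of a ``big hill'' in the small window $[-2a_k,2a_k-1]$, I would just use a union bound together with the tail asymptotic \eqref{M}. Under $Q$ the sequence $\{M_i\}_{i\in\Z}$ is i.i.d.\ with $Q(M_0>x)\le Cx^{-s}$, so
\[
Q\bigl(\exists\,i\in[-2a_k,2a_k-1]:M_i>n_k^{(1-\epsilon)/s}\bigr)\;\le\;4a_k\cdot C\,n_k^{-(1-\epsilon)}\;=\;O\!\bigl(n_k^{1/s-(1-\epsilon)}\bigr)\;=\;O\!\bigl(n_k^{-(1-1/s-\epsilon)}\bigr),
\]
where I used $a_k=n_k^{1/s}/D$. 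The standing hypothesis $\epsilon<1-1/s$ makes the exponent strictly negative, and because $n_k=m^{m^k}$ grows double-exponentially in $k$, the sum $\sum_k n_k^{-\delta}$ converges for any $\delta>0$. A direct application of the Borel--Cantelli lemma then shows that the event in the display occurs for only finitely many $k$, $Q$-almost surely, which is exactly the reduced claim.

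I do not anticipate any real obstacle here. The one quantitative point to notice is that the threshold $n_k^{(1-\epsilon)/s}$ is calibrated precisely so that the expected number of $M_i$ exceeding it in a window of length $O(a_k)=O(n_k^{1/s})$ is $O(n_k^{-(1-1/s-\epsilon)})$, which is small exactly when $\epsilon<1-1/s$. So the assumption on $\epsilon$ in the lemma is sharp for this argument, and the double-exponential choice of $n_k$ guarantees summability across $k$, making Borel--Cantelli a single clean step rather than a delicate estimate.
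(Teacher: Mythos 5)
Your proposal is correct and follows the same route as the paper: reduce the event that the two maxima differ to the event that some $i\in[-2a_k,2a_k-1]$ has $M_i>n_k^{(1-\epsilon)/s}$, bound that probability by $4a_k\,Q(M_0>n_k^{(1-\epsilon)/s})=O(n_k^{-(1-1/s-\epsilon)})$ using i.i.d.\ structure and the tail estimate \eqref{M}, and conclude by Borel--Cantelli. The only cosmetic difference is that the paper phrases the reduction as an inclusion of events (the "difference" event is contained in the "big hill in the window" event), while you argue the contrapositive directly; these are equivalent.
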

\begin{proof}
We have the following inclusion,
\begin{align*}
&\left\{\max_{\substack{j\in J_{n_k} \\ i\in[(j-1)a_k,(j+1)a_k-1]}}\beta_i^j\mathbf{I}_{\{M_i>n_k^{(1-\epsilon)/s}\}}\neq\max_{\substack{j\in J_{n_k}\setminus\{-1,0,1\} \\ i\in[(j-1)a_k,(j+1)a_k-1]}}\beta_i^j\mathbf{I}_{\{M_k>n_k^{(1-\epsilon)/s}\}}\right\}\\
&\quad=\left\{\max_{\substack{j\in \{-1,0,1\} \\ i\in[(j-1)a_k,(j+1)a_k-1]}}\beta_i^j\mathbf{I}_{\{M_i>n_k^{(1-\epsilon)/s}\}}>\max_{\substack{j\in J_{n_k}\setminus\{-1,0,1\} \\ i\in[(j-1)a_k,(j+1)a_k-1]}}\beta_i^j\mathbf{I}_{\{M_k>n_k^{(1-\epsilon)/s}\}}\right\}\\
&\quad\subset\{\max_{-2a_k\leq i\leq 2a_k-1}M_i>n_k^{(1-\epsilon)/s}\}.
\end{align*}
That is, in order for the two maximums to not be equal there must be at least one large hill corresponding to some $i \in [-2a_k,2a_k-1]$.
Moreover, for large $n_k$,
$$
Q\left(\max_{-2a_k\leq i\leq 2a_k-1}M_i>n_k^{(1-\epsilon)/s}\right)=(4a_k)Q(M_0> n_k^{(1-\epsilon)/s})
= O\left( \frac{a_k}{n_k^{1-\epsilon}} \right) = O\left(\frac{1}{n_k^{1-\epsilon-1/s}}\right),
$$
where the second to last equality comes from the tail asymptotics of $M_0$ in \eqref{M} and the last equality comes from the definition of $a_k$.  Then, the conclusion of the lemma follows from the Borel-Cantelli Lemma.
\end{proof}
A following lemma shows that for $n_k$ large enough each interval  $[(i-1)a_k,(i+1)a_k-1]$ with $i \in J_{n_k}$ contains at most one big hill.
\begin{lem}\label{lm34}
If $0<\epsilon<\frac{s-1}{2s}$, then
$$Q\left(\exists j\in J_{n_k}\text{ such that }\sharp \{i\in[(j-1)a_k,(j+1)a_k-1]:M_i>n_k^{(1-\epsilon)/s}\}\geq2\text{ for infinitely many k}\right)=0.$$
\end{lem}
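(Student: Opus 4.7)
The plan is to prove Lemma \ref{lm34} by a straightforward union bound combined with the Borel--Cantelli lemma, relying on the fact that the $M_i$ are i.i.d.\ under $Q$ with polynomial tails \eqref{M}.

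First I would fix $j \in J_{n_k}$ and bound the probability that the interval $[(j-1)a_k,(j+1)a_k-1]$, which contains exactly $2a_k$ indices, has at least two ``big hills.'' Since $\{M_i\}_{i \in \mathbb{Z}}$ is i.i.d.\ under $Q$, this probability is at most
\[
\binom{2a_k}{2} Q\bigl(M_0 > n_k^{(1-\epsilon)/s}\bigr)^2 \leq C a_k^2 \, n_k^{-2(1-\epsilon)},
\]
using the upper bound $Q(M_0 > x) \leq C x^{-s}$ derived from \eqref{M}. Next I would take a union bound over $j \in J_{n_k}$, using $|J_{n_k}| \leq 3 n_k / a_k + 1$, to obtain
\[
Q\Bigl(\exists j \in J_{n_k} : \sharp\{i \in [(j-1)a_k, (j+1)a_k -1] : M_i > n_k^{(1-\epsilon)/s}\} \geq 2 \Bigr) \leq C' \frac{a_k}{n_k^{1-2\epsilon}}.
\]
Plugging in $a_k = n_k^{1/s}/D$ gives the bound $O\!\left( n_k^{-(1 - 1/s - 2\epsilon)} \right)$.

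Finally, the hypothesis $\epsilon < \frac{s-1}{2s}$ is exactly what forces $1 - 1/s - 2\epsilon > 0$, so the right-hand side is a strictly negative power of $n_k$. Since $n_k = m^{m^k}$ grows super-exponentially in $k$, this upper bound is summable in $k$, and the Borel--Cantelli lemma finishes the proof.

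There is no real obstacle here beyond the arithmetic: the only subtle point is verifying that the threshold $\epsilon < (s-1)/(2s)$ in the hypothesis is precisely what the union-bound estimate demands, which is why this particular range of $\epsilon$ appears. The calculation above is purely a second-moment style bound on a sparse random set of exceedances, and it is essentially the simplest nontrivial case of such an estimate.
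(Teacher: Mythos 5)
Your proof is correct and takes essentially the same route as the paper: a union bound over $j \in J_{n_k}$ combined with a second-moment bound on the number of exceedances in a block of $2a_k$ i.i.d.\ variables with tail $\eqref{M}$, followed by Borel--Cantelli. The only cosmetic difference is that you bound $Q(N\geq 2)$ by $\binom{2a_k}{2}p^2$ via a union over pairs, while the paper writes $N$ explicitly as a binomial random variable and uses $(1-np)\leq(1-p)^n$ to get the same $O((a_k p)^2)$ estimate.
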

\begin{proof}
Since $\{M_i\}_{i\in\mathbb{Z}}$ is i.i.d. under $Q$,
\begin{align*}
&Q \left( \exists j\in J_{n_k}\text{ such that }\sharp \{i\in[(j-1)a_k,(j+1)a_k-1]:M_i>n_k^{(1-\epsilon)/s}\}\geq 2 \right)\\
&\qquad \leq3\frac{n_k}{a_k}Q(\sharp \{i\in[0,2a_k-1]:M_i>n_k^{(1-\epsilon)/s}\}\geq 2).
\end{align*}
For simplicity, let us denote $N:=\sharp \{i\in[0,2a_k-1]:M_i>n_k^{(1-\epsilon)/s}\}$.  Then, $N$ is a binomial random variable with parameter $n=2a_k$ and $p=Q(M_0>n_k^{(1-\epsilon)/s})$.  Using the inequality $(1-np)\leq(1-p)^n$ for $n\geq0$ and $0\leq p\leq1$,
\begin{align*}
Q(N\geq 2)&=1-(1-p)^n-np(1-p)^{n-1}\leq n(n-1)p^2\leq (np)^2.
\end{align*}
Recall that $a_k = n_k^{1/s}/D$ with some fixed constant $D>1$ and $Q(M_0>n_k^{(1-\epsilon)/s})\leq Cn^{\epsilon-1}$ for some constant $C>0$.  Then, we have
\[
3\frac{n_k}{a_k}P(N\geq2)\leq 3\frac{n_k}{a_k}\left(\frac{2Ca_k}{n_k^{1-\epsilon}}\right)^2\leq\frac{C'}{n_k^{1-1/s-2\epsilon}},\quad \text{for some }C'>0.
\]
Since $1-1/s-2\epsilon>0$ by our assumption, the conclusion follows from the Borel-Cantelli Lemma.
\end{proof}

Finally, we show that for some subsequence of $n_k$ the sums of $\beta_i^j$ corresponding to big hills are bounded above by $\epsilon' n_k^{1/s}$ for any $\epsilon'>0$.
\begin{cor}\label{lm36}
Suppose $0<\epsilon<\frac{s-1}{2s}$.  Then, for any $\epsilon'>0$,
$$Q\left(\max_{j\in J_{n_k}} \sum_{i=(j-1)a_k}^{(j+1)a_k-1}\beta_i^{j}\mathbf{I}_{\{M_i> n^{(1-\epsilon)/s}\}}<\epsilon' n_k^{1/s}\quad{}i.o.\right)=1$$
\end{cor}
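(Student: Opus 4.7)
The proof combines the structural reductions of Lemmas \ref{lm33}--\ref{lm34} with the tail asymptotic \eqref{Qbtails} for $\b_0$ and a Borel--Cantelli 2 argument enabled by the doubly-exponential growth $n_k = m^{m^k}$.

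\textbf{Reduction.} By Lemmas \ref{lm33} and \ref{lm34}, outside a $Q$-null set, for all $k$ large enough the maximum in the statement is attained at some $j\in J_{n_k}\setminus\{-1,0,1\}$ for which the interval $[(j-1)a_k,(j+1)a_k-1]$ contains at most one big hill $i^*$. A direct computation using \eqref{beta} together with the identity $W_k^{(m)} = W_k - \Pi_{m+1,k}W_m \leq W_k$ (valid when a reflection is added at $m \leq k$) yields $\b_i^j \leq \b_i$. Hence the sum in the statement is bounded above by $\max_{i \in I_k} \b_i$, where $I_k \subset [-n_k-a_k, n_k+a_k]$ indexes all ladder blocks considered. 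Thus it suffices to show
\[
 Q\!\left(\max_{i \in I_k} \b_i < \epsilon' n_k^{1/s} \text{ i.o.}\right) = 1.
\]

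\textbf{Tail estimate for a single $k$.} By \eqref{Qbtails}, $Q(\b_0 > x) \sim K_\infty x^{-s}$. Introducing the truncated crossing time $\b_i^{(c_k)}$ with $c_k := \lfloor(\log n_k)^2\rfloor$ (as in Section \ref{sec:ald}), each $\b_i^{(c_k)}$ depends only on the i.i.d.\ blocks $\mathfrak{B}_{i-c_k+1},\ldots,\mathfrak{B}_i$, and the error $\b_i - \b_i^{(c_k)}$ is uniformly negligible on $I_k$ (by the argument of Lemma \ref{lem:beta0}). Partitioning $I_k$ into $c_k$ mod-classes $S_l := \{i \in I_k : i \equiv l \pmod{c_k}\}$, the variables $\{\b_i^{(c_k)}\}_{i\in S_l}$ are i.i.d.\ with Pareto tails, so $Q(\max_{S_l} \b_i^{(c_k)} \leq \epsilon' n_k^{1/s}) \to 1$ as $k \to \infty$ (since $|S_l| Q(\b_0 > \epsilon' n_k^{1/s}) = O(1/c_k) \to 0$). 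A Bonferroni bound over the $c_k$ classes combined with a classical extreme-value analysis for $c_k$-dependent stationary Pareto sequences then yields $Q(\max_{i \in I_k} \b_i \leq \epsilon' n_k^{1/s}) \geq q(\epsilon') > 0$ for all $k$ large enough.

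\textbf{Borel--Cantelli 2 via rapid growth.} Because $n_{k-1}/n_k = m^{-(m-1)m^{k-1}} \to 0$ super-exponentially while $c_k$ and $a_{k-1}$ are both $o(n_k)$, we may define annular events
\[
 \tilde{\mathcal{E}}_k := \left\{ \max_{i \in I_k :\, n_{k-1} + a_{k-1} + c_k < |i|} \b_i^{(c_k)} \leq \tfrac12 \epsilon' n_k^{1/s} \right\},
\]
which depend only on blocks with ladder index in the annular regions $[n_{k-1}+1,\, n_k+a_k] \cup [-n_k-a_k,\, -n_{k-1}-1]$. Since blocks are i.i.d.\ under $Q$ and these ranges are disjoint for different $k$, the events $\tilde{\mathcal{E}}_k$ are mutually independent, each with $Q(\tilde{\mathcal{E}}_k) \geq q' > 0$ by the preceding step. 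Borel--Cantelli 2 gives $\tilde{\mathcal{E}}_k$ i.o.\ $Q$-a.s. The ``interior'' contribution $\max_{|i| \leq n_{k-1}+a_{k-1}+c_k} \b_i$ is $O(n_{k-1}^{1/s}) = o(n_k^{1/s})$ by the tail estimate at scale $n_{k-1}$, and the truncation error $\max_i |\b_i-\b_i^{(c_k)}|$ is similarly $o(n_k^{1/s})$, so $\max_{i \in I_k} \b_i < \epsilon' n_k^{1/s}$ i.o.\ $Q$-a.s., completing the proof.

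\textbf{Main obstacle.} The main technical point is the tail estimate: making rigorous the lower bound $Q(\max_{i\in I_k}\b_i \leq c n_k^{1/s}) \geq q(c) > 0$ uniformly in $k$ despite the non-i.i.d.\ dependence of the $\b_i$. The truncation to $\b_i^{(c_k)}$ localizes the dependence to a window of $c_k$ blocks, after which a Fr\'echet-type argument applies; the required uniform controls on truncation and boundary errors parallel those developed in Section \ref{sec:ald}.
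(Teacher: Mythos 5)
Your proposal shares the same high-level strategy as the paper — reduce to a single extreme via Lemma~\ref{lm34}, then apply the second Borel--Cantelli lemma across the doubly-exponential scales $n_k=m^{m^k}$ by (i) exhibiting an independence structure between consecutive $k$'s and (ii) bounding $Q(\text{event at scale }k)$ away from zero — but the execution of both (i) and (ii) differs substantially, and several of your steps are gaps rather than proofs.

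On independence, the paper takes a shortcut you miss: because the event in \eqref{final2} is stated for $j\in J_{n_k}\setminus\{-1,0,1\}$ and the quantities $\b_i^j$ already carry a reflection at $\nu(j-1,k)$, the event at scale $k$ depends on the environment \emph{only outside} $[\nu_{-a_k},\nu_{a_k}]$, while the event at every smaller scale $k'<k$ lives inside this interval (since $n_{k'}+a_{k'}<a_k$). Independence across $k$ is therefore immediate and exact, with no truncation and no error terms to track. You instead introduce the truncated $\b_i^{(c_k)}$ and restrict to annular index ranges to engineer independence; this can be made to work, but it creates three extra obligations that your sketch only gestures at: controlling $\max_i(\b_i-\b_i^{(c_k)})$ (Lemma~\ref{lem:beta0} bounds a \emph{sum} and carries an indicator $\mathbf{I}_{\{M_i\le b_n\}}$, so it is not a drop-in), controlling the ``interior'' maximum $\max_{|i|\lesssim n_{k-1}}\b_i=o(n_k^{1/s})$ (which needs its own Borel--Cantelli~1 argument with the Pareto tail and the doubly-exponential scale), and verifying that the annular ranges are actually disjoint for consecutive $k$ (your stated ranges $[n_{k-1}+1,n_k+a_k]$ overlap; the correct ranges do happen to be disjoint, but as written the claim is wrong).

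On the uniform lower bound $Q(E_k)\ge q>0$, the paper bounds the sum by $\max_i\b_i$ and then cites \cite[Prop.~5.1]{PS13}, which says $\{\b_i/(2n)^{1/s}\}_{-n\le i<n}$ converges weakly to a Poisson point process with intensity $\gamma x^{-s-1}\,dx$; the limit probability of no point above $\epsilon'/2$ is a strictly positive constant, so the probabilities are eventually bounded away from $0$. Your replacement — a Bonferroni bound over mod-$c_k$ classes plus ``classical extreme-value analysis for $c_k$-dependent stationary Pareto sequences'' — does not close the argument as stated: the union bound over the $c_k$ classes produces $Q(\max_{I_k}>\epsilon' n_k^{1/s})\le c_k\cdot O(1/c_k)=O(1)$, which gives no lower bound at all, and the appeal to extreme-value theory for $m$-dependent sequences (which would indeed give a Fréchet limit and hence the needed $q(\epsilon')>0$) is asserted rather than proved and checked (Leadbetter-type conditions $D$, $D'$ or an explicit blocking argument). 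So while your route is plausible and self-contained in spirit, the key quantitative step is a genuine gap; the paper's proof is shorter precisely because it avoids both the truncation bookkeeping (via the reflection trick) and the extreme-value machinery (via the Poisson limit from \cite{PS13}).
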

\begin{proof}
First, we prove that,
\begin{equation}
Q\left(\max_{ \substack{j\in J_{n_k}\setminus\{-1,0,1\} \\ i\in[(j-1)a_k,(j+1)a_k-1]}  }  \beta_i^j\mathbf{I}_{\{M_i> n_k^{(1-\epsilon)/s}\}}<\epsilon' n_k^{1/s}\quad i.o.\right)=1. \label{final2}
\end{equation}
Since $n_k=m^{m^k}$ for some $m>s$ and $a_k = n_k^{1/s}/D$ we have that $n_{k-1} < a_k$ for all $k$ large enough. Therefore, $[\nu_{-n_{k-1}},\nu_{n_{k-1}}] \subset [\nu_{-a_k},\nu_{a_k}] = [\nu(-1,k),\nu(1,k)]$ and due to the reflections used in the definition of $\b_i^j$ the event inside the probability in \eqref{final2} is independent of the environment in the interval $[\nu_{-n_{k-1}},\nu_{n_{k-1}}]$.
Therefore, the events inside \eqref{final2} are an independent sequence for $k$ large enough and so to prove \eqref{final2} by the second Borel-Cantelli Lemma it is enough to show that
\begin{equation}
\sum_{k=1}^\infty Q\left(
\max_{ \substack{j\in J_{n_k}\setminus\{-1,0,1\} \\ i\in[(j-1)a_k,(j+1)a_k-1]}  }
\beta_i^j\mathbf{I}_{\{M_i> n_k^{(1-\epsilon)/s}\}}<\epsilon' n_k^{1/s}\right)=\infty. \label{2bc}
\end{equation}
To prove \eqref{2bc}, note that
\begin{align}
Q\left(\max_{ \substack{j\in J_{n_k}\setminus\{-1,0,1\} \\ i\in[(j-1)a_k,(j+1)a_k-1]}  }  \beta_i^j\mathbf{I}_{\{M_i> n_k^{(1-\epsilon)/s}\}}<\epsilon' n_k^{1/s}\right)
&\geq Q\left(\max_{ \substack{j\in J_{n_k} \\ i\in[(j-1)a_k,(j+1)a_k-1]}} \beta_i^j\mathbf{I}_{\{M_i> n_k^{(1-\epsilon)/s}\}}<\epsilon' n_k^{1/s}\right)\nonumber\\
&\geq Q\left(\max_{ \substack{j\in J_{n_k} \\ i\in[(j-1)a_k,(j+1)a_k-1]}} \frac{\beta_i^j}{(2n_k)^{1/s}}<\frac{\epsilon'}{2}\right)\nonumber\\
&\geq Q\left(\max_{i\in[-n_k,n_k-1]} \frac{\beta_i}{(2n_k)^{1/s}}<\frac{\epsilon'}{2}\right). \label{maxbeta}
\end{align}
It was shown in \cite[Proposition 5.1]{PS13} that
$\{\frac{\beta_i}{(2n)^{1/s}},-n\leq i <n\}$ converges weakly to a nonhomogeneous Poisson point process with intensity measure $\gamma x^{-s-1}dx$ for some $\gamma>0$.
Hence, the probabilities in \eqref{maxbeta} are uniformly bounded away from 0 for all $k$ and thus \eqref{2bc} follows.

By Lemma \ref{lm33} and \ref{lm34}, we have, for $k$ large enough,
\begin{align}
\max_{\substack{j\in J_{n_k}\setminus\{-1,0,1\}\\i\in[(j-1)a_k,(j+1)a_k-1]}} \beta_i^j\mathbf{I}_{\{M_i> n_k^{(1-\epsilon)/s}\}}&=\max_{\substack{j\in J_{n_k}\\i\in[(j-1)a_k,(j+1)a_k-1]}} \beta_i^j\mathbf{I}_{\{M_i> n_k^{(1-\epsilon)/s}\}} \nonumber \\
&=\max_{j\in J_{n_k}}\sum_{i=(j-1)a_k}^{(j+1)a_k-1} \beta_i^j\mathbf{I}_{\{M_i> n_k^{(1-\epsilon)/s}\}}. \label{3betamax}
\end{align}
Hence, the conclusion of the Corollary follows from \eqref{final2} and \eqref{3betamax}.
\end{proof}

We are now ready to give the proof of the main result of this section.
\begin{proof}[\textit{Proof of Proposition }\ref{prop2}]
Recall that $a_k=\frac{n_k^{1/s}}{D}$ and $L_k=\frac{n_k}{a_k(1-\delta)} = \frac{D}{1-\delta} n_k^{1-1/s}$ for some fixed $\delta>0$.  And, choose $\lambda=\lambda_k=\frac{D_0}{n_k^{1/s}}$ for any fixed $D_0>0.$
Recall from \eqref{main:inequal} that
$$P_\omega( T_{\nu_{n_k}} > u \nu_{n_k} ) \leq
 P_\omega(N_k \neq \tilde{N}_k) +  P_\omega(  \tilde{N}_k  > L_k, \,  N_k = \tilde{N}_k  ) + P_\omega\left( \sum_{i=1}^{\tilde{N}_k} \Theta_i > u n_k, \, \tilde{N}_k \leq L_k \right).$$
We have proved in Lemma \ref{main:lemma1} and \eqref{Nklarge} that the first two terms on the right side decay exponentially for $Q$-a.e.\ environment $\w$.  Consequently,
\begin{equation}
\lim_{n\to\infty}\frac{1}{n^{1-1/s}}\log \left\{P_\omega(N_k \neq \tilde{N}_k)+P_\omega( \tilde{N}_k  > L_k, \, N_k = \tilde{N}_k )\right\}=-\infty.\label{PA1A2}
\end{equation}
Regarding the third term, for each $i \leq \tilde{N}_k$ the distribution of the crossing time $\Theta_i$ is determined by the location $Z_{i-1}\in J_{n_k}$.  Also, since $\tilde{N}_k\leq L_k$,
\begin{align}
P_\omega\left( \sum_{i=1}^{\tilde{N}_k} \Theta_i > u \nu_{n_k}, \, \tilde{N}_k \leq L_k \right)&\leq P_\omega\left(\sum_{i=1}^{L_k}\Theta_i\mathbf{I}_{\{Z_{i-1}\in J_{n_k}\}}>u\nu_{n_k} \right)\nonumber\\
&\leq E_\omega\left[\prod_{i=1}^{L_k}e^{\lambda_k\Theta_i\mathbf{I}_{\{Z_{i-1}\in J_{n_k}\}}}\right]e^{-\lambda_k u\nu_{n_k}},\label{A3}
\end{align}
where the second inequality comes from Chebyshev's inequality.
Now, we claim that
\begin{equation}
 E_\omega\left[\prod_{i=1}^{L}e^{\lambda_k\Theta_i\mathbf{I}_{\{Z_{i-1}\in J_{n_k}\}}}\right]
\leq \left(\max_{j\in{J_{n_k}}}E^{\nu(j,k)}_{\omega(\nu(j-1,k))}[e^{\lambda_k T_{\nu(j+1,k)}}]\right)^{L}, \quad \text{for any } L \geq 1. \label{thetamgfub}
\end{equation}
To see this, let $\mathfrak{G}_{i}:=\sigma(X_n:n\leq\sum_{l=1}^i\Theta_l)$ be the $\sigma$-field generated by the walk up until the $i$-th step of the induced birth-death process on the blocks.
Then,
\begin{align}
E_\omega\left[\prod_{i=1}^{L}e^{\lambda_k\Theta_i\mathbf{I}_{\{Z_{i-1}\in J_{n_k}\}}}\right]&= E_\omega\left[E_\omega\left[\prod_{i=1}^{L}e^{\lambda_k\Theta_i\mathbf{I}_{\{Z_{i-1}\in J_{n_k}\}}}|\mathfrak{G}_{L-1}\right]\right]\nonumber\\ &=E_\omega\left[\prod_{i=1}^{L-1}e^{\lambda_k\Theta_i\mathbf{I}_{\{Z_{i-1}\in J_{n_k}\}}}E_\omega\left[e^{\lambda_k\Theta_{L}\mathbf{I}_{\{Z_{L-1}\in J_{n_k}\}}}|\mathfrak{G}_{L-1}\right]\right]\nonumber\\
&\leq \max_{j\in{J_{n_k}}}E^{\nu(j,k)}_{\omega(\nu(j-1,k))}[e^{\lambda_k T_{\nu(j+1,k)}}]\times E_\omega\left[\prod_{i=1}^{L-1}e^{\lambda_k\Theta_i\mathbf{I}_{\{Z_{i-1}\in J_{n_k}\}}}\right],\nonumber
\end{align}
where the last inequality comes from \eqref{thetaine}, and then \eqref{thetamgfub} follows by induction.
Applying \eqref{thetamgfub} with $L=L_k$, we have
\begin{align}
&\liminf_{k\to\infty}\frac{1}{n_{k}^{1-1/s}}\log P_\omega\left( \sum_{i=1}^{\tilde{N}_k} \Theta_i > u \nu_{n_k}, \, \tilde{N}_k \leq L_k \right) \nonumber \\
&\qquad\leq\liminf_{k\to\infty}\frac{1}{n_{k}^{1-1/s}}\log \left\{ \left(\max_{j\in{J_{n_{k}}}}E^{\nu(j,k)}_{\omega(\nu(j-1,k))}[e^{\lambda_k T_{\nu(j+1,k)}}]\right)^{L_{k}}e^{-\lambda_k u\nu_{n_{k}}} \right\} \nonumber\\
&\qquad=\liminf_{k\to\infty}\frac{L_{k}}{n_{k}^{1-1/s}}\left(\log\max_{j\in J_{n_{k}}}E^{\nu(j,k)}_{\omega(\nu(j-1,k))}[e^{\lambda_k T_{\nu(j+1,k)}}]\right)-\frac{\lambda_k u\nu_{n_{k}}}{n_{k}^{1-1/s}}\nonumber\\
&\qquad\leq\liminf_{k\to\infty}\frac{L_{k}}{n_{k}^{1-1/s}}\left(\max_{j\in J_{n_{k}}}\frac{\sinh{\lambda_k}(E_{\omega(\nu(j-1,k))}^{\nu(j,k)}[T_{\nu(j+1,k)}])}{e^{-\lambda_k}-\sinh{\lambda_k}(E_{\omega(\nu(j-1,k))}^{\nu(j-1,k)}[T_{\nu(j+1,k)}])}\right)-\frac{\lambda_k u\nu_{n_{k}}}{n_{k}^{1-1/s}},\label{conc}
\end{align}
where the first inequality comes from \eqref{A3} and \eqref{thetamgfub}, and the last inequality comes from \eqref{ref2}.
Recall from Lemma \ref{lm35} that, for any $\epsilon_1>0$ and $0<\epsilon<\frac{s-1}{2s}$, there is a $K(\omega)$ such that for all $k\geq K(\omega)$,
\begin{equation}
\max_{j\in J_{n_k}}\sum_{i=(j)a_k}^{(j+1)a_k-1} \beta_i^j\mathbf{I}_{\{M_i\leq n_k^{(1-\epsilon)/s}\}}\leq \frac{E_Q[\b_0]+\epsilon_1/2}{D}n_k^{1/s}.\label{ineq1}
\end{equation}
On the other hand, by Corollary \ref{lm36} with $\epsilon'=\epsilon_1/D$, we can find an environment dependent subsequence of $n_k$ defined as $n_{k'}$ such that
\begin{equation}
\max_{j\in J_{n_{k'}}} \sum_{i=(j-1)a_{k'}}^{(j+1)a_{k'}-1}\beta_i^{j}\mathbf{I}_{\{M_i> n_{k'}^{(1-\epsilon)/s}\}}<\frac{\epsilon_1}{D}n_{k'}^{1/s}. \label{ineq2}
\end{equation}
Then, by a choice of $D>2(E_Q[\b_0]+\epsilon_1)D_0$, \eqref{ex1} is satisfied for some subsequence $k'$.
Therefore, we can conclude that \eqref{conc} is bounded above by
\begin{align}
& \lim_{k \to\infty}\frac{L_{k}}{n_{k}^{1-1/s}}\frac{\sinh(\l_{k}) \frac{E_Q[\b_0]+3\epsilon_1/2}{D} n_{k}^{1/s}}{e^{-\l_{k}}-2 \sinh(\l_{k}) \frac{E_Q[\b_0]+\epsilon_1}{D} n_{k}^{1/s}}-\frac{\lambda_{k} u \, \nu_{n_{k}}}{n_{k}^{1-1/s}} \nonumber \\
&\quad\quad= \frac{D_0(E_Q[\b_0]+3\epsilon_1/2)}{(1-\delta)(1-2D_0(E_Q[\b_0]+\epsilon_1)/C)}-D_0uE_Q[\nu_1],\qquad Q\text{-a.s.},\label{conclu0}
\end{align}
where in the last equality we used that $\l_k = D_0 n_k^{-1/s}$, $L_k = \frac{D}{1-\d} n_k^{1-1/s}$ and the fact that $\nu_n/n \rightarrow E_Q[\nu_1]$, $Q$-a.s.
In summary, we have shown that for any $D_0,\epsilon_1,\d>0$ and for all sufficiently large $D<\infty$ that
\[
 \liminf_{n\ra\infty} \frac{1}{n^{1-1/s}} \log P_\w( T_{\nu_n} > u{\nu_n} ) \leq \frac{D_0(E_Q[\b_0]+3\epsilon_1/2)}{(1-\delta)(1-2D_0(E_Q[\b_0]+\epsilon_1)/D)}-D_0uE_Q[\nu_1],\qquad Q\text{-a.s.}.
\]
By first taking $D\rightarrow\infty$ and then letting $\epsilon_1,\d\rightarrow 0$, we can thus conclude that
\begin{equation}\label{optlim}
 \liminf_{n\ra\infty} \frac{1}{n^{1-1/s}} \log P_\w( T_{\nu_n} > u{\nu_n} ) \leq D_0E_Q[\nu_1]\left(\frac{E_Q[\b_0]}{E_Q[\nu_1]}-u\right),\qquad Q\text{-a.s.},
\end{equation}
for any $D_0 < \infty$.
Finally, since
\[
 \frac{1}{v_\alpha}
= \lim_{n\to\infty}\frac{T_n}{n}
=\lim_{n\to\infty}\frac{T_{\nu_n}}{\nu_n}
=\lim_{n\to\infty}\frac{T_{\nu_n}}{n}\frac{n}{\nu_n}
= \frac{E_Q[ \b_0] }{E_Q[\nu_1]},
\]
it follows that the term in parenthesis in \eqref{optlim} is negative for $u > 1/v_\a$, and thus the right side of \eqref{optlim} can be made smaller than any negative number by choosing $D_0$ sufficiently large.
\end{proof}

\bibliographystyle{alpha}
\bibliography{Reference_Advanced}
\end{document}